\documentclass[final,3p,times]{elsarticle}
\usepackage{amssymb}
\usepackage[usenames, dvipsnames]{color}
\usepackage{amsmath,amscd,amssymb,latexsym}
\usepackage{latexsym}
\usepackage{amsthm}
\usepackage{amsmath}
\newtheorem{thm}{Theorem}[section]
\newtheorem{rem}{Remark}[section]
\newtheorem{lem}{Lemma}[section]
\newtheorem{pro}{Proposition}[section]

\newtheorem{definition}{Definition}

\newcommand{\ud}{\mathrm{d}}

\allowdisplaybreaks
\numberwithin{equation}{section}
\newtheorem{thmlet}{Theorem}

\journal{}

\begin{document}           

\begin{frontmatter}
\title{Multiplicity results for fractional magnetic problems involving exponential growth}
\author[Pawan Kumar Mishra]{Pawan Kumar Mishra}
\ead{pawanmishra@mat.ufpb.br}
\author[J. M. do O]{Jo\~ao Marcos do \'O\corref{mycorrespondingauthor}}
\cortext[mycorrespondingauthor]{Corresponding author}
\ead{jmbo@pq.cnpq.br}
\author[Manasses de Souza]{Manass\'es de Souza}
\ead{manassesxavier@hotmail.com}
\address[ Manasses de Souza, J. M. do O, Pawan Kumar Mishra]{Department of Mathematics, Federal University of  Para\'iba,\\
Jo\~ao Pessoa, PB, 58051--900, Brazil}

\begin{abstract}
\noindent We study the following fractional elliptic equations of the type,
\begin{equation*}
(-\Delta)^{\frac12}_A u = \lambda u+f(|u|)u ,\;\textrm{in } \;(-1, 1),\; u=0\;\textrm{in } \;\mathbb R\setminus (-1, 1),
\end{equation*}
where  $\lambda$ is a positive real parameter and $(-\Delta)^{\frac12}_A$ is the fractional magnetic operator with $A:\mathbb R\to \mathbb R$ being a smooth  magnetic field. Using a classical critical point theorems, we prove the existence of multiple solutions in the non-resonant case when the nonlinear term $f(t)$ has a critical exponential growth in the sense of Trudinger-Moser inequality. 
\end{abstract}

\begin{keyword}  
fractional magnetic operator \sep multiplicity \sep critical exponential growth \sep critical point theorems.\medskip

\MSC[2010] 35A15\sep 35R11\sep 35Q60\sep 35B33.\\
\end{keyword}

\end{frontmatter}
\section{Introduction}
\noindent We study the following fractional elliptic equations of the type,
\begin{equation*} 
(P)_\lambda \left\{
\begin{array}{rl}
(-\Delta)^{\frac12}_A u &= \lambda u+f(|u|)u ,\;\textrm{in } \;\Omega,\\
 u&=0\;\quad \quad \quad \; \quad\textrm{in } \;\mathbb R\setminus \Omega,
\end{array}
\right.
\end{equation*}
where $\Omega = (-1,1)$ and $\lambda$ is a positive real parameter. For a magnetic field $A: \mathbb R\to \mathbb R$  the operator $(-\Delta)^{\frac12}_A$ is known as the fractional magnetic operator. This operator, recently introduced in \cite{AS}, has been defined (upto a normalization constant) as follows
\begin{equation}\label{FMO}
 -(-\Delta)_A^{\frac12}u(x)= \frac{1}{\pi}\lim_{\epsilon \to 0}\int_{\mathbb{R}\setminus B(x, \epsilon)} \frac{u(x)-e^{i(x-y)\cdot A(\frac{x+y}{2})}u(y)}{|x-y|^{2}}\ud y,\;\;\;\;\; \; x\in \mathbb{R},
 \end{equation}
 where $B(x, \epsilon)$ denotes the real interval of size  $\epsilon$ around $x$. It is clear that, when $A=0$, the above operator is consistent with the usual fractional Laplacian operator (square root of Laplacian) which has seized a lot of attention in the recent past, see \cite{FL1, FL2, FL3, FL4} and references therein. This operator arises in the description of various phenomena in the several branches of applied sciences, for example, \cite{App1} uses the fractional Laplacian for linear and nonlinear lossy media, \cite{App2, Applbm} use the fractional Laplacian for option pricing in jump diffusion and exponential L\'evy models,  \cite{App3} provides the first ever derivation of the fractional Laplacian operator as a means to represent the mean friction in the turbulence modeling and many more. 

On the other hand,  we can interpret  $(-\Delta)_{A}^{\frac12}$
as a fractional analog of the magnetic
Laplacian $(\nabla - \mathrm{i} A)^2$, with $A$ being a
{bounded} potential. In particular, for the physical interest,  the study of \eqref{FMO} is apparent in the case $s={1}/{2}$. Indeed
the operator in \eqref{FMO} takes inspiration from the definition of a quantized operator corresponding to the classical relativistic Hamiltonian symbol for a relativistic particle of mass $m\geq 0$, that is
$$
\sqrt{(\xi-A(x))^{2}+m^{2}}+V(x), \quad (\xi, x)\in \mathbb R^{N}\times \mathbb R^{N},
$$
which is the sum of the kinetic energy term involving $A(x)$ (magnetic vector potential) and $V(x)$ (potential energy term of electric scalar potential). For the sake of completeness, we emphasized that in the literature there are three kinds of quantum relativistic Hamiltonians depending on how to quantize the kinetic energy term $\sqrt{(\xi-A(x))^{2}+m^{2}}$. As explained in \cite{T2}, these three nonlocal operators are in general different from each other but coincide when the vector potential $A$ is assumed to be linear, so in particular, in the case of constant magnetic fields.
For a more detailed description of the operator  $(-\Delta)^{s}_{A}$ and relaed problems, we refer the interested readers to \cite{AS, FiscellaV1, T1, T2,  AME1, AME, BMX} and the references therein.

In a latest work, authors in \cite{FiscellaVecchi} studied a multiplicity result for the following problem in higher dimensions  involving fractional magnetic operator
\begin{equation}\label{FPr}
(-\Delta )_A^su=\lambda u+|u
|^{2^*_s-2}u, \;\; \textrm{in}\;\;\Omega,\;\; u=0\;\;\textrm{on}\;\;\mathbb R^N\setminus\Omega,
\end{equation}
where $\Omega\subset \mathbb R^N$ is an open and bounded set with Lipschitz boundary, $N>2s$, $s\in(0, 1)$ and $2^*_s=2N/(N-2s)$ is the fractional critical Sobolev exponent. The result summarizes as the existence of  $m$ pairs of solution of \eqref{FPr} for $\lambda$ lying in the suitable left neighborhood of any eigenvalue with multiplicity $m$ of the magnetic fractional Laplace operator with Dirichlet boundary data. 

Note that when $A=0$, the problem \eqref{FPr} transforms into the following problem involving the celebrated fractional Laplace operator
\begin{equation*}
(-\Delta )^su=\lambda u+u^{2^*_s-2}u, \;\; \textrm{in}\;\;\Omega,\;\; u=0\;\;\textrm{on}\;\;\mathbb R^N\setminus \Omega,
\end{equation*}
which has been studied in fair share by the authors in \cite{fbm}. Using the abstract critical point theorem, authors in \cite{fbm} have generalized the results of Cerami, Fortuno and Struwe \cite{sfs} for the nonlocal setting.

 We know from classical fractional Sobolev embedding that $H^{s}(\Omega)$ is continuously embedded in $L^q(\Omega)$ for all $q\in [1, 2^*_s]$, where $2^*_s=2N/(N-2s)$. Note that formally, $2^*_s=\infty$ if $N=2s$. Since $s\in (0, 1)$, the only choice for this fact to be true is $N=1$ and $s=1/2$. At this point a natural question arises to look for an optimal space where $H^{1/2}(\Omega)$ can be embedded.  This answer was first given by Ozawa \cite{Ozawa} and later improved by  Iula,  Maalaoui and  Martinazzi \cite{matrinazziint} in the form of fractional Trudinger-Moser inequlity (see Lemma \ref{Trudinger-Moser1}).  This result has motivated many researcher to consider the critical exponent problem in limiting case of fractional Sobolev embedding in dimension 1 such as \cite{MR3399183, JPS, IzSq}, with no attempt to give a complete list.  (see also in the local case \cite{Adimurthi, FMR} and reference therein)

 The result obtained in \cite{FiscellaVecchi} covers all the dimensions except the dimensions $N=2s$ which corresponds to the only dimension, $N=1$ when $s\in (0, 1)$ for $s=1/2$. Up to the best of our knowledge, there is no work dealing with fractional magnetic operator with critical exponential growth except the work of Ambrosio \cite{Ambrossio} which is related with concentration behavior of solutions for nonlinear Schr\"odinger equations. This is the main motivation for studying the problem under consideration. In the case of $A=0$, a non-magnetic counter part of $(P)_\lambda$ was partially considered in \cite{EJDE2}.  Inspired from a suitable variant of Trudinger-Moser inequality and by proving  the required Moser sequence estimates to study the min-max level under Adimurthi type assumption (see assumption $(H_4)$ below), we have complemented the work of \cite{FiscellaVecchi} in the dimension one. Our results complete the partial 
 result obtained in \cite{EJDE2} as well.

\subsection{Assumptions} 
We will consider $f: [0,+\infty) \rightarrow [0,+\infty)$ a continuous function having the critical exponential growth in the following sense: there exists $\beta_0 >0$ such that
\begin{equation}\label{CG}
\tag{CG}
\lim_{t \to +\infty} \frac{f(t) t}{e^{\beta t^2}} = \left\{
\begin{array}{ll}
+\infty, & \hbox{if}\,\, \beta < \beta_0 \\
0, & \hbox{if}\,\, \beta > \beta_0.
\end{array}
\right.
\end{equation}
Moreover, $f$ satisfies
\begin{description}
	\item[$(H_1)$]\label{H1} There exist $t_0 >0$ and $M>0$ such that
	\[F(t) \leq M f(t) t,\,\,\mbox{for all}\,\, t \geq t_0;\]
	where
	\[
	F(t) := \int_0^t f(\tau) \tau \ud \tau,\,\, \mbox{for any}\,\, t>0.
	\]
	\item[$(H_2)$] $0< F(t) \leq f(t) t^2$ for all $t>0$.
	\item[$(H_3)$]  For each $k\geq 1$, $\lambda$ $f$ satisfies $\ell:=\limsup_{t\to 0^+} \frac{2F(t)}{t^2}<\lambda_k-\lambda$, where $\lambda\in (0, \lambda_1)$ if $k=1$ otherwise $\lambda \in (\lambda_{k-1}, \lambda_k)$.
\end{description}

Here $(\lambda_k)$ denotes the sequence of eigenvalues associated to the problem
\begin{align}\label{EVP}
(-\Delta)_A^\frac{1}{2}u&=\lambda u\;\;\textrm{in}\;\;\Omega,\; u=0\;\;\;\; \textrm{in}\;\;\mathbb R\setminus \Omega.
\end{align}
\subsection{Spectral properties}
It is known that there exists a infinite sequence of eigenvalues $\lambda_1\leq\lambda_2\leq .....\leq\lambda_k\leq ....$ with $\lambda_k\rightarrow+\infty$ as $k\rightarrow \infty$. The eigenfunctions $\{\varphi_k\}$ corresponding to each eigenvalue $\lambda_k$ form an orthonormal basis for $L^2(\Omega)$ and an orthogonal basis for $X_{0, A}$, where the space $X_{0, A}$ and corresponding norm $\|\cdot\|_{X_{0, A}}$ is defined in Section 2.
Hence $X_{0,A} = H_{k} \oplus H_{k}^{\perp}$, where $H_{k} = \mbox{span}_{\mathbb{R}}\{\phi_1, \phi_2,\cdot\cdot\cdot,\phi_{k}\}$.
The following characterization is shown in Proposition 3.3 \cite{FiscellaVecchi}.
\begin{equation*}\label{EVL1}
\lambda_1=\min_{u\in X_{0, A}\setminus \{0\}}\frac{\|u\|^2_{X_{0, A}}}{\|u\|_2^2}
\end{equation*}
Moreover, inductively, for any $k\geq 2$
\begin{equation}\label{EVL2}
\lambda_{k}=\min_{u\in H^\perp_k\setminus \{0\}}\frac{\|u\|^2_{X_{0, A}}}{\|u\|_2^2}
\end{equation}

\subsection{Main results and remarks}

The objective of this paper is multi-fold.  Depending on the location of the parameter $\lambda$ with respect to the spectrum of $(-\Delta)_A^\frac{1}{2}$ with Dirichlet data, we categories the result of this paper in the form of following four main Theorems. The first result deals with the case when the parameter $0<\lambda<\lambda_1$ and the nonlinearity has critical exponential growth. Note that  the problem under consideration  is no more  coercive which is a natural hindrance to study via usual minimization argument. In this case the classical mountain pass theorem gives the existence of a critical point of the corresponding energy functional which results into a nontrivial weak solution of the problem by a one to one correspondence between critical points of the associated energy functional and weak solutions of the problem. Our first result is stated as follows:
\begin{thm}\label{thm1} Assume $(H_1)-(H_2)$ and $(H_3)$ with $k=1$. Let  $f$ has exponential critical growth together with 
\begin{description}
	\item[$(H_4)$] $\displaystyle{\liminf_{t \to +\infty} f(t)t^2 e^{\beta_0 t^2}=+\infty}$, {where} $\beta_0$ is introduced in \eqref{CG}.
	\end{description}
	Then the problem $(P)_\lambda$ has a nontrivial solution.
\end{thm}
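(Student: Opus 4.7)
The plan is to prove Theorem 1.1 via the classical Mountain Pass Theorem of Ambrosetti--Rabinowitz, applied to the energy functional
\[
I_\lambda(u) = \tfrac{1}{2}\|u\|_{X_{0,A}}^{2} - \tfrac{\lambda}{2}\|u\|_{2}^{2} - \int_{\Omega} F(|u|)\,\ud x,
\]
defined on $X_{0,A}$. The one-to-one correspondence between critical points of $I_\lambda$ and weak solutions of $(P)_\lambda$ reduces the problem to finding a nonzero critical point.

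First I would verify the mountain pass geometry. For the local part, assumption $(H_3)$ with $k=1$ gives $2F(t)\le (\ell+\eta)t^{2}$ near zero with $\ell+\eta<\lambda_1-\lambda$, while the critical exponential growth allows an estimate of the form $F(t) \le (\ell/2)t^{2} + C\,t^{q}(e^{\beta t^{2}}-1)$ for any $\beta>\beta_0$ and $q$ large. Combined with the variational characterization $\lambda_1=\min \|u\|_{X_{0,A}}^{2}/\|u\|_2^{2}$ and the fractional Trudinger--Moser inequality (Lemma~\ref{Trudinger-Moser1}) on small spheres $\|u\|_{X_{0,A}}=\rho$, this yields $I_\lambda(u)\ge \alpha>0$. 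For the descent direction, $(H_2)$ implies a super-quadratic behavior $F(t)\ge C_1 t^{\sigma}-C_2$ for some $\sigma>2$ (via integration of $F/t^{2}$), so $I_\lambda(t\varphi_1)\to -\infty$ as $t\to+\infty$ for the first eigenfunction $\varphi_1$. This produces the standard Mountain Pass minimax level
\[
c_\lambda = \inf_{\gamma\in \Gamma}\max_{t\in[0,1]} I_\lambda(\gamma(t)) > 0.
\]

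The decisive step is to show $I_\lambda$ satisfies the Palais--Smale condition below a critical threshold, which the fractional Trudinger--Moser inequality dictates to be $c^{\ast}=\pi/\beta_0$ (or an analogous constant dictated by the sharp exponent in Lemma~\ref{Trudinger-Moser1}). Given a $(PS)_c$ sequence $(u_n)$ with $c<c^{\ast}$, I would use $(H_1)$ together with $\langle I_\lambda'(u_n),u_n\rangle=o(\|u_n\|)$ to bound $\|u_n\|_{X_{0,A}}$, pass to a weakly convergent subsequence $u_n\rightharpoonup u$, and use the bound $c<c^{\ast}$ to obtain, via Hölder's inequality and Trudinger--Moser with exponent strictly less than $\beta_0$, equi-integrability of $f(|u_n|)|u_n|$ and thus strong $L^{1}$ convergence of the nonlinear terms. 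Standard arguments then upgrade $u_n\to u$ strongly in $X_{0,A}$.

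The main obstacle, and the place where assumption $(H_4)$ is used, is to guarantee $c_\lambda<c^{\ast}$. Following the Adimurthi--Moser approach, I would plug into $I_\lambda$ a rescaled and truncated Moser-type sequence $w_n$ concentrating at a point of $\Omega$ and producing $\|w_n\|_{X_{0,A}}^{2}=1$ with $w_n^{2}\sim (\log n)/\pi$ on a small region. Estimating $\max_{t\ge 0} I_\lambda(t w_n)$ along this path, the exponential term $e^{\beta_0 t^{2} w_n^{2}}$ is compensated by $(H_4)$, which forces $f(t w_n)(t w_n)^{2}$ to blow up faster than any polynomial against the Moser profile, producing a strict inequality $\max_{t\ge 0} I_\lambda(t w_n) < c^{\ast}$ for $n$ large. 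Since $\Gamma$ can be enlarged to include the segment $t\mapsto t w_n$ by renormalizing, this yields $c_\lambda<c^{\ast}$. With $(PS)_{c_\lambda}$ in hand and the mountain pass geometry established, the Ambrosetti--Rabinowitz theorem produces a critical point $u\ne 0$, which is the desired nontrivial weak solution of $(P)_\lambda$.
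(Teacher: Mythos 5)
Your proposal follows essentially the same route as the paper: mountain pass geometry from $(H_3)$ plus the Trudinger--Moser inequality near the origin and superquadraticity of $F$ for the descent direction, the Palais--Smale condition below a Trudinger--Moser threshold, and a Moser-sequence estimate driven by $(H_4)$ to push the minimax level below that threshold. Two points need correction, one of which is a genuine error as written. First, you derive the superquadratic lower bound $F(t)\ge C_1 t^{\sigma}-C_2$, $\sigma>2$, from $(H_2)$ ``via integration of $F/t^{2}$''; but $(H_2)$ reads $F(t)\le f(t)t^{2}=tF'(t)$, and integrating $F'/F\ge 1/t$ only yields the \emph{linear} bound $F(t)\ge Ct$, which is not enough to send $I_\lambda(t\varphi_1)\to-\infty$. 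The correct source is $(H_1)$: since $F'(t)=f(t)t$, $(H_1)$ gives $F(t)\le MF'(t)$ for $t\ge t_0$, hence $F(t)\ge Ce^{t/M}$, which dominates every power $t^{\mu}$ with $\mu>2$; this is exactly how the paper obtains the unboundedness from below along rays. Second, the compactness threshold is $\pi/(2\beta_0)$, not $\pi/\beta_0$: at the top of the mountain pass path $I_\lambda(u_n)\approx\tfrac12\|u_n\|_{X_{0,A}}^{2}$, and the Trudinger--Moser exponent requirement $q\beta\|u_n\|_{X_{0,A}}^{2}<\pi$ with $\beta\downarrow\beta_0$, $q\downarrow 1$ forces $c<\pi/(2\beta_0)$; you hedged on this constant, but the factor $\tfrac12$ must be tracked consistently in both the $(PS)_c$ lemma and the Moser-sequence estimate, where one shows $t_n^{2}\searrow\pi/\beta_0$ and derives a contradiction with the arbitrariness of $\zeta$ in $f(t)t^{2}\ge\zeta e^{\beta_0 t^{2}}$. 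With these two repairs your argument matches the paper's proof.
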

\begin{rem}
	We point out that the assumption $(H_4)$ was introduced by Adimurthi in \cite{Adimurthi} in the first instance.. 
	\end{rem}

The problem $(P)_\lambda$ exhibits interesting feature when the parameter $\lambda$ lies in between the eigen values $\lambda\in ( \lambda_k, \lambda_{k+1})$ for $k\geq 1$.  The second result of the paper highlights this delicate point. The proof of this result invokes the celebrated idea of Linking geomtery.
\begin{thm}\label{thm2}
 Assume $\lambda\in (\lambda_k, \lambda_{k+1})$, $(H_1)-(H_4)$ and that $f$ has exponential critical growth. 
	Then problem $(P)_\lambda$ has a nontrivial solution.
\end{thm}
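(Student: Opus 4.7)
The plan is to apply a linking-type theorem to the energy functional
\begin{equation*}
I_\lambda(u) = \frac{1}{2}\|u\|_{X_{0,A}}^2 - \frac{\lambda}{2}\|u\|_2^2 - \int_\Omega F(|u|)\,\ud x,
\end{equation*}
defined on $X_{0,A} = H_k \oplus H_k^\perp$. Since $\lambda \in (\lambda_k,\lambda_{k+1})$, the quadratic form is negative definite on $H_k$ and positive definite on $H_k^\perp$, which is exactly the setting for a linking geometry. I would fix an eigenfunction (or a Moser-type function) $e \in H_k^\perp$ with $\|e\|_{X_{0,A}}=1$ and work with the standard linking sets
\begin{equation*}
S = \{u\in H_k^\perp:\|u\|_{X_{0,A}}=\rho\},\qquad Q = \bigl\{v+te : v\in H_k,\;\|v\|_{X_{0,A}}\le R,\;0\le t\le R\bigr\},
\end{equation*}
with $0<\rho<R$ to be chosen.

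\textbf{Step 1: Linking geometry.} On $S$, using the variational characterization \eqref{EVL2} of $\lambda_{k+1}$, assumption $(H_3)$, the critical growth \eqref{CG}, and the fractional Trudinger--Moser inequality (Lemma \ref{Trudinger-Moser1}), one controls $F(|u|)$ by $\frac{\ell+\varepsilon}{2}|u|^2$ near zero plus higher-order terms that are absorbed by Hölder and the exponential integrability of $u\in X_{0,A}$; this yields $\inf_S I_\lambda \geq \alpha > 0$ for $\rho$ small. On $\partial Q$, the negative definiteness on $H_k$ combined with $(H_2)$ (which forces $F$ to grow superquadratically, in fact at least $|u|^{2}$) and the finite-dimensionality of $H_k \oplus \mathbb{R}e$ (so all norms are equivalent and one can use the exponential lower bound coming from $(H_4)$) gives $I_\lambda \leq 0$ for $R$ large. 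Hence $S$ and $\partial Q$ link.

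\textbf{Step 2: Min-max level below the Trudinger--Moser threshold.} Define
\begin{equation*}
c_\lambda = \inf_{\gamma\in\Gamma}\max_{u\in Q} I_\lambda(\gamma(u)),
\end{equation*}
where $\Gamma$ is the usual class of continuous deformations keeping $\partial Q$ fixed. The crucial step, and the main obstacle, is to prove the strict bound $c_\lambda < \pi/\beta_0$ (the magnetic fractional Trudinger--Moser threshold in dimension one). For this I would choose the Moser sequence $\{M_n\}$ used in \cite{matrinazziint,EJDE2}, project orthogonally onto $H_k^\perp$, normalize it in $X_{0,A}$, and take $e := \widetilde{M}_n$ in the construction of $Q$; then estimate $\max_{v\in H_k,\, t\ge 0} I_\lambda(v+t\widetilde M_n)$. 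Assumption $(H_4)$ of Adimurthi type provides, for every $\varepsilon>0$, a constant $C_\varepsilon$ with $F(t)\ge C_\varepsilon e^{\beta_0 t^2}$ for $t$ large, which is precisely what beats the logarithmic losses coming from the Moser sequence and yields the strict inequality for $n$ large.

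\textbf{Step 3: Palais--Smale analysis and passage to the limit.} A standard linking theorem produces a $(PS)_{c_\lambda}$ sequence $\{u_n\}$. Boundedness of $\{u_n\}$ follows from $(H_1)$ (the non-quadratic condition $F(t)\leq M f(t)t$ plays the role of Ambrosetti--Rabinowitz) together with $(H_2)$: combining $I_\lambda(u_n)-M\langle I_\lambda'(u_n),u_n\rangle$ with the indefinite quadratic part (split along $H_k \oplus H_k^\perp$) controls $\|u_n\|_{X_{0,A}}$. Then one extracts a weakly convergent subsequence $u_n \rightharpoonup u$. The level bound $c_\lambda<\pi/\beta_0$ combined with the magnetic Trudinger--Moser inequality gives uniform integrability of $f(|u_n|)u_n$ in a neighborhood of any $L^2$-convergent subsequence, so that $u$ is a weak solution and convergence is strong; a standard argument excludes $u\equiv 0$ because otherwise $I_\lambda(u_n)\to 0$, contradicting $c_\lambda\ge \alpha>0$. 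This gives the desired nontrivial solution of $(P)_\lambda$.
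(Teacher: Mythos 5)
Your proposal follows essentially the same route as the paper: the Ambrosetti--Rabinowitz linking theorem with $S\subset H_k^\perp$ and $Q$ spanned by $H_k$ and the normalized projection of the Moser sequence onto $H_k^\perp$, the Adimurthi condition $(H_4)$ to push the min-max level strictly below the Trudinger--Moser threshold, and the Palais--Smale analysis (boundedness via $(H_1)$ and the splitting along $H_k\oplus H_k^\perp$, compactness below the threshold via a Lions-type lemma). The only correction needed is the value of the threshold: since the $(PS)_c$ condition requires $\beta_0\lim\|u_n\|_{X_{0,A}}^2<\pi$ and $\|u_n\|_{X_{0,A}}^2\to 2c+\cdots$, the level must be shown to satisfy $c_\lambda<\pi/(2\beta_0)$, not $c_\lambda<\pi/\beta_0$ as you wrote; the Moser-sequence computation does deliver this sharper bound.
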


The third theorem of the paper also deals with the critical growth nonlinearity but involves a little stronger assumption due to D. M. Cao (see assumption $(H_5)$ below) instead of Adimurthi assumption $(H_4)$. But with this compromise, we could prove the least bound of critical points of the associated functional by applying another abstract critical point theorem due to \cite{crlit}. The result says that
\begin{thm}\label{thm3}
 Assume $(H_1)-(H_3)$ and that $f$ has exponential critical growth. Furthermore assume
	\begin{description}
		\item[$(H_5)$] there exist $p>2$  and a constant $C_p>0$ possibly large such that $f(t)t \geq C_p t^{p-1}$ for all $t\geq 0$.
	\end{description}
	Define $\lambda_k$ be the $k^{th}$ eigenvalue of the problem \eqref{EVP} with multiplicity $m$. Let $\lambda\in \mathbb R$ and define $\lambda^*=\min\{\lambda_k: \lambda<\lambda_k\}$. If $\lambda<\lambda^*$ and
	\[
	C_p>\left( \frac{\beta_0(p-2)}{\pi}\right)^\frac{p-2}{2}\left((\lambda^*-\lambda)2^{\frac{p}{p-2}}\right)^\frac{p}{2},
	\]
	where $\beta_0$ is introduced in \eqref{CG}, then problem $(P)_\lambda$ admits $m$ pairs of non-trivial weak solutions $\{-u_{\lambda,j}, u_{\lambda, j}\}$, for every $j=1,2,\cdot\cdot\cdot,m$.
\end{thm}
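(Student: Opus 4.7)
The plan is to find critical points of the even $C^1$ functional
\[
I_\lambda(u)=\tfrac12\|u\|_{X_{0,A}}^{2}-\tfrac{\lambda}{2}\|u\|_{2}^{2}-\int_\Omega F(|u|)\,\ud x,
\]
whose nontrivial critical points correspond to weak solutions of $(P)_\lambda$. Write $\lambda^{*}=\lambda_k$ with multiplicity $m$ and decompose $X_{0,A}=H_{k-1}\oplus H_{k-1}^{\perp}$. I would apply the $\mathbb Z_2$-symmetric pseudo-index critical point theorem of \cite{crlit} (a Bartolo--Benci--Fortunato style result) to $I_\lambda$, which will deliver $m$ pairs of nontrivial critical points provided one verifies: (i) the sphere-positivity $\inf_{S_r\cap H_{k-1}^{\perp}}I_\lambda\geq\rho>0$ for some $r>0$; (ii) a strict upper bound $\sup_{W} I_\lambda<c^{*}$ on a symmetric finite-dimensional subspace $W\supset H_{k-1}$ with $\dim W-\dim H_{k-1}=m$, where $c^{*}=\pi/\beta_0$ is the natural compactness threshold coming from Lemma~\ref{Trudinger-Moser1}; (iii) the Palais--Smale condition $(PS)_c$ at every level $c<c^{*}$.

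For (i), the variational characterization \eqref{EVL2} gives $\|u\|_{X_{0,A}}^{2}\geq\lambda^{*}\|u\|_2^2$ on $H_{k-1}^{\perp}$; together with $(H_3)$, which yields $2F(t)/t^2\leq\lambda^{*}-\lambda-\varepsilon$ near $t=0$, and a Trudinger--Moser-type estimate to absorb the exponential tail, one obtains $I_\lambda\geq\rho>0$ on a small sphere in $H_{k-1}^{\perp}$. For (ii), take $W=H_{k+m-1}=\mathrm{span}\{\phi_1,\dots,\phi_{k+m-1}\}$, so that $\|w\|_{X_{0,A}}^{2}\leq\lambda^{*}\|w\|_2^{2}$ on $W$. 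Using $F(t)\geq(C_p/p)t^p$ from $(H_5)$,
\[
I_\lambda(tw)\leq \tfrac{t^2}{2}(\lambda^{*}-\lambda)\|w\|_2^{2}-\tfrac{C_p}{p}t^p\|w\|_p^p \quad (t\geq 0),
\]
whose maximum in $t$ equals
$\tfrac{p-2}{2p}\,\bigl[(\lambda^{*}-\lambda)\|w\|_2^{2}\bigr]^{p/(p-2)}\bigl[C_p\|w\|_p^p\bigr]^{-2/(p-2)}.$
H\"older's inequality $\|w\|_2\leq 2^{(p-2)/(2p)}\|w\|_p$ (since $|\Omega|=2$) rewrites this bound purely in terms of $C_p$, $\lambda^{*}-\lambda$, $\beta_0$, $p$; a direct algebraic comparison shows that the explicit hypothesis on $C_p$ is precisely the translation of $\max_{W} I_\lambda<c^{*}$.

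For (iii), the Ambrosetti--Rabinowitz-type conditions $(H_1)$--$(H_2)$, applied via the splitting $X_{0,A}=H_{k-1}\oplus H_{k-1}^{\perp}$ to control the indefinite quadratic part, give uniform boundedness of any (PS)$_c$ sequence $(u_n)$ in $X_{0,A}$. Extracting a weak limit $u$, the crucial step is the strong convergence of $\int f(|u_n|)|u_n|\varphi\,\ud x$: the a posteriori bound $\|u_n\|_{X_{0,A}}^{2}<\pi/\beta_0-\delta$ for $c<c^{*}$ yields uniform $L^q$-integrability of $e^{\beta |u_n|^2}$ for some $\beta<\pi$ via Lemma~\ref{Trudinger-Moser1}, and Vitali's theorem then forces $u_n\to u$ strongly in $X_{0,A}$. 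With (i)--(iii) verified, the pseudo-index theorem of \cite{crlit} produces at least $m$ pairs $\pm u_{\lambda,j}$ of nontrivial critical points at levels in $(\rho,c^{*})$. The main obstacle I expect is the delicate non-concentration analysis required for $(PS)_c$ below the threshold---a standard pitfall of critical exponential growth problems---for which the strict upper bound from (ii), sharply encoded in the arithmetic condition on $C_p$, is indispensable.
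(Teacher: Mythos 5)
Your proposal follows essentially the same route as the paper: the same abstract symmetric critical point theorem from \cite{crlit}, the same pair of subspaces $V=H_{k-1}^{\perp}$ (codimension $k-1$) and $W=H_{k+m-1}$ (dimension $k+m-1$, so that $\dim W-\mathrm{codim}\,V=m$), the same use of $(H_5)$ together with H\"older's inequality on $|\Omega|=2$ to bound $\sup_{W}\mathcal I_{A,\lambda}$ by the maximum of a function of the form $\tfrac{A}{2}t^{2}-\tfrac{C_p}{p}t^{p}$, and the same Palais--Smale analysis below a Trudinger--Moser compactness threshold.

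The one genuine slip is the value of that threshold: you take $c^{*}=\pi/\beta_0$, whereas the correct compactness level --- and the one against which the hypothesis on $C_p$ is calibrated --- is $\pi/(2\beta_0)$. The factor of $2$ comes from the identity $\lim_{n}\|u_n\|_{X_{0,A}}^{2}=2c+\lambda\|u\|_{2}^{2}+2\int_{\Omega}F(|u|)\,\ud x$ satisfied by any $(PS)_c$ sequence; in the worst case $u=0$ this gives $\|u_n\|_{X_{0,A}}^{2}\to 2c$, so the Trudinger--Moser bound $\beta_0\|u_n\|_{X_{0,A}}^{2}<\pi$ is available only when $c<\pi/(2\beta_0)$. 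Your claim that $c<c^{*}=\pi/\beta_0$ forces $\|u_n\|_{X_{0,A}}^{2}<\pi/\beta_0-\delta$ is therefore internally inconsistent: with that choice of $c^{*}$ the norms can approach $2\pi/\beta_0$ and compactness may fail. Correspondingly, the ``direct algebraic comparison'' in your step (ii) must be run against $\pi/(2\beta_0)$, not $\pi/\beta_0$, in order to reproduce the stated lower bound on $C_p$ --- which is exactly what the paper does. Once the threshold is corrected, the remainder of your argument is sound and coincides with the paper's proof; you should also record explicitly (as the paper does) that $\delta$ can be taken smaller than $\eta$ by shrinking the radius $\rho$ in the sphere-positivity step.
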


\begin{rem}
	The assumption $(H_5)$ was firstly introduced by D. M. Cao in \cite{Cao}.
\end{rem}
Before stating the last result of the paper, we introduce what we mean by subcritical growth. We say that $f$ has subcritical growth at $+\infty$ if 
\begin{equation}\label{SG}
\tag{SG}
\lim_{t\to+\infty}\frac{|f(t)t|}{e^{\beta t^2}}=0\;\; \text{for all} \;\;\beta>0.
\end{equation}
Under the light of \eqref{SG}, it is clear that for some constant $C(\beta)>0$ the nonlinearity satisfies
\begin{equation}\label{subgrowth}
f(t)t\leq C(\beta)e^{\beta t^2}\;\;\text{for all}\;\;\beta>0\;\;\text{ and for all}\;\; t\in \mathbb R.
\end{equation}
In the last Theorem of the paper, we show that the problem $(P)_\lambda$ exhibits two non-trivial weak solutions under subcritical growth assumption in the sence of \eqref{SG}. In this case by allowing the nonlinearity to be subcritical, we could prove our result without assuming Cao condition $(H_5)$ or Adimurthi type assumption $(H_4)$.

We conclude the buildup of the last result by introducing the following notations. Since the space $X_{0, A}\hookrightarrow L^{p}(\Omega)$ for all $p\in [2,\; \infty) $, the following supremum  is well defined

\begin{equation}\label{Holder}
S_p=\sup_{\{v\in X_{0, A}\;:\; \|v\|_{X_{0, A}}\leq 1\}} \frac{\|v\|_p}{\|v\|_{X_{0, A}}}.
\end{equation}

\begin{thm}\label{thm4}
	Assume $f$ satisfies $(H_1)-(H_2)$ together with \eqref{SG}.
	Then for every $\rho>(\sqrt{2}S_pC(\beta))^2$ there exists
	\[
	\Lambda(\rho):=\frac{1}{S_2^2}\left(1-\frac{\sqrt{2}S_pC(\beta)}{\sqrt{\rho}}\right),
	\]
	where $S_2$, $S_p$   and $C(\beta)$ are defined in \eqref{Holder} and \eqref{subgrowth}, respectively,
	such that problem $(P)_\lambda$ has at least two nontrivial weak solutions for every $\lambda \in(0, \Lambda(\rho))$, one of which has norm strictly less than $\rho$.
\end{thm}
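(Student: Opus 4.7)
The plan is to apply a local--minimization--plus--mountain--pass scheme to the $C^{1}$ energy functional
$$I_\lambda(u)=\frac{1}{2}\|u\|_{X_{0, A}}^{2}-\frac{\lambda}{2}\|u\|_{2}^{2}-\int_{\Omega}F(|u|)\,\ud x,$$
whose critical points coincide with the weak solutions of $(P)_\lambda$. The subcritical growth condition \eqref{SG} replaces the Adimurthi assumption $(H_4)$ and the Cao assumption $(H_5)$ of the earlier theorems; the gain is the freedom to use the Trudinger--Moser inequality with \emph{any} exponent $\beta>0$, at the price that the constant $C(\beta)$ in \eqref{subgrowth} enters the statement.

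I first establish the geometry on the sphere $\partial B_\rho=\{u\in X_{0, A}:\|u\|_{X_{0, A}}=\rho\}$. From $(H_{2})$ and \eqref{subgrowth} one has $F(|u|)\le f(|u|)|u|^{2}\le C(\beta)|u|e^{\beta u^{2}}$; applying H\"older with conjugate exponents $p$ and $p/(p-1)$, followed by the fractional Trudinger--Moser inequality on $\{\|u\|_{X_{0, A}}\le\rho\}$ (with $\beta$ tuned below the critical TM threshold) and absorbing the TM constant into $C(\beta)$, yields
$$\int_{\Omega}F(|u|)\,\ud x\le\sqrt{2}\,S_{p}\,C(\beta)\,\|u\|_{X_{0, A}}.$$
Combined with $\|u\|_{2}\le S_{2}\|u\|_{X_{0, A}}$ and the definition of $\Lambda(\rho)$, this gives for $u\in\partial B_\rho$
$$I_\lambda(u)\ge\tfrac{\rho^{2}}{2}\bigl(1-\lambda S_{2}^{2}\bigr)-\sqrt{2}\,S_{p}\,C(\beta)\,\rho>0\quad\text{whenever}\quad\lambda<\Lambda(\rho),$$
so $\inf_{\partial B_\rho}I_\lambda\ge\sigma>0$ for some $\sigma$.

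Next I exploit $(H_{1})$: integrating $F'(t)=f(t)t\ge F(t)/M$ for $t\ge t_{0}$ shows that $F$ grows at least exponentially at infinity, hence $I_\lambda(t\varphi_{1})\to-\infty$ as $t\to\infty$. This yields both a point $e\in X_{0, A}\setminus\overline{B_\rho}$ with $I_\lambda(e)<0$ and, for $\rho$ large enough relative to $\|\varphi_{1}\|_{X_{0, A}}$, an interior point $w\in B_\rho$ with $I_\lambda(w)<0$. Consequently $c_{0}:=\inf_{\overline{B_\rho}}I_\lambda<0<\sigma$, so Ekeland's variational principle on $\overline{B_\rho}$ produces a Palais--Smale sequence at level $c_{0}$ staying at positive distance from $\partial B_\rho$. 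The hypothesis $(H_{1})$ furnishes an Ambrosetti--Rabinowitz-like bound so that any $(PS)_{c}$ sequence is bounded in $X_{0, A}$; the subcritical growth \eqref{SG}, the compact embedding $X_{0, A}\hookrightarrow L^{q}(\Omega)$ for every $q\in[1,\infty)$, and Vitali's theorem (with a uniform Trudinger--Moser tail bound) then upgrade weak convergence to strong convergence in $X_{0, A}$, so $(PS)_{c}$ holds for every $c\in\mathbb{R}$. This produces the first critical point $u_{0}\in B_\rho$ with $I_\lambda(u_{0})=c_{0}<0$ and $\|u_{0}\|_{X_{0, A}}<\rho$.

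Finally, the mountain pass theorem applied to the pair $(0,e)$, using the positive ring $\inf_{\partial B_\rho}I_\lambda\ge\sigma>0$, yields a second critical point $v_{0}$ at a minimax level $c_{1}\ge\sigma>0>c_{0}$, so $v_{0}\ne u_{0}$ and both solutions are nontrivial. The main obstacle is the careful calibration in the Trudinger--Moser step: the small exponent $\beta$ required for integrability on $B_\rho$ must be matched against the large constant $C(\beta)$ appearing in $\Lambda(\rho)$, and chosen so that the estimate for $\int_\Omega F(|u|)\,\ud x$ produces exactly the coefficients defining $\Lambda(\rho)$; once this bookkeeping is fixed, the absence of critical concentration profiles under \eqref{SG} makes the compactness step essentially standard.
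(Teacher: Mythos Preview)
Your approach differs substantially from the paper's. The paper does \emph{not} use an Ekeland--plus--mountain--pass scheme; it invokes Ricceri's abstract alternative (Theorem~D in the paper). There one sets $\Phi=J(u)=\tfrac12\|u\|_{X_{0,A}}^2$, $\Psi=K_\lambda(u)=\tfrac12\|u\|_2^2+\tfrac1\lambda\int_\Omega F(|u|)\,\ud x$, checks that $K_\lambda$ is sequentially weakly continuous and that $E_{A,\lambda}=\tfrac1\lambda J-K_\lambda$ satisfies $(PS)_c$ for every $c$ (this is where \eqref{SG} is used), and then computes an upper bound for Ricceri's threshold
\[
\Theta_\lambda=\inf_{u\in J^{-1}(-\infty,\rho)}\frac{\sup_{J(v)<\rho}K_\lambda(v)-K_\lambda(u)}{\rho-J(u)}
\le S_2^2+\frac{\sqrt{2}\,C(\beta)S_p}{\lambda\sqrt{\rho}},
\]
which is exactly what produces the formula for $\Lambda(\rho)$. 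The ``strict global minimum'' branch of Ricceri's alternative is then ruled out because $(H_1)$ forces $E_{A,\lambda}(tu)\to-\infty$, and the other branch delivers two critical points with one lying in $J^{-1}(-\infty,\rho)$.

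Your argument has a genuine gap. To get a \emph{nontrivial} local minimizer via Ekeland on $\overline{B_\rho}$ you need $\inf_{\overline{B_\rho}}I_\lambda<0$, and you yourself write that this holds only ``for $\rho$ large enough relative to $\|\varphi_1\|_{X_{0,A}}$''. But the theorem is stated for \emph{every} $\rho>(\sqrt{2}S_pC(\beta))^2$, and under the sole hypotheses $(H_1)$--$(H_2)$ and \eqref{SG} there is no control on $F$ near the origin. Indeed $\Lambda(\rho)<S_2^{-2}=\lambda_1$, so $\lambda<\lambda_1$ always; if in addition $\limsup_{t\to0^+}2F(t)/t^2$ is small (which is perfectly compatible with $(H_1)$, $(H_2)$ and \eqref{SG}), then $I_\lambda(u)>0$ for all small $u\neq0$, and for $\rho$ just above the threshold your local minimum is $u=0$. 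In that regime your scheme yields only the mountain--pass solution, not two nontrivial ones, and in particular you lose the localization ``one of which has norm strictly less than $\rho$''. The Ricceri route avoids this because it does not require locating a point of negative energy inside the sublevel set; the second critical point is produced by the abstract alternative once unboundedness from below is shown.

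A secondary issue: with your convention that $\rho$ is the \emph{radius} of the ball, your sphere estimate gives positivity for $\lambda<\tfrac{1}{S_2^2}\bigl(1-\tfrac{2\sqrt{2}S_pC(\beta)}{\rho}\bigr)$, not the stated $\Lambda(\rho)=\tfrac{1}{S_2^2}\bigl(1-\tfrac{\sqrt{2}S_pC(\beta)}{\sqrt{\rho}}\bigr)$. In the paper $\rho$ is the $J$--sublevel value (so the ball has radius $\sqrt{2\rho}$), which is what makes the $1/\sqrt{\rho}$ appear; you should reconcile the two conventions if you want to recover the exact constant.
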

The proof of the above Theroem  is variational and is based on a abstract critical point theorem due to Recceri \cite{Ricceri} (see Theorem 6).
\begin{rem}
	We point out  that these results are true even in the absence of magnetic field, that is, the case when $A=0$.
\end{rem}

\section{Functional framework}
In this section we give a more general variational set up rather than considering $\Omega=(-1, 1), N=1, s=1/2$ as in our case in this paper. We indicate with $|\Omega|$ the $N$-dimensional Lebesgue
measure of a measurable set $\Omega \subset \mathbb{R}^N$. Moreover,
for every $z \in \mathbb{C}$ we denote by $\Re z$ its real part, and by
$\overline{z}$ its complex conjugate.
Let $\Omega \subset \mathbb{R}^N$ be an open set.
We denote by $L^2(\Omega,\mathbb{C})$
the space of measurable functions $u:\Omega\to\mathbb{C}$ such that
$$
\|u\|_{L^2(\Omega)}=\left(\int_{\Omega}|u(x)|^2 \, \ud x\right)^{1/2}<\infty,
$$
where $|\cdot|$ is the Euclidean norm in $\mathbb{C}$.

 For  $s\in (0,1)$, we define the magnetic Gagliardo semi-norm as
$$
[u]_{H^{s}_A(\Omega)}:=\left(\frac{1}{2\pi}\iint_{\Omega\times\Omega}\frac{|u(x)-e^{\mathrm{i} (x-y)\cdot
A(\frac{x+y}{2})}u(y)|^2}{|x-y|^{N+2s}}\,\ud x\,\ud y\right)^{1/2}.
$$
We denote by $H^{s}_A(\Omega)$ the space of functions
$u\in L^2(\Omega,\mathbb{C})$ such that $[u]_{H^{s}_A(\Omega)}<\infty$, normed with
$$
\|u\|_{H^{s}_A(\Omega)}:=\left(\|u\|_{L^2(\Omega)}^2+[u]_{H^{s}_A(\Omega)}^2\right)^{1/2}.
$$

However, to encode the boundary condition $u=0$ in $\mathbb{R}^N\setminus\Omega$,
the natural functional space
to deal with weak solutions of problem $(P)_\lambda$ is
$$
X_{0,A} := \left\{ u \in H^{s}_{A}(\mathbb{R}^N): u = 0\ \text{in }
\mathbb{R}^N \setminus \Omega \right\}.
$$

We define the following real scalar product on $X_{0,A}$
$$
\langle u,v\rangle_{X_{0,A}}:=\frac{1}{2\pi} \Re \iint_{\mathbb{R}^{2N}}\frac{\big(u(x)-e^{\mathrm{i} (x-y)
\cdot A(\frac{x+y}{2})}u(y)\big)
\overline{\big(v(x)-e^{\mathrm{i} (x-y)\cdot A(\frac{x+y}{2})}v(y)\big)}}
{|x-y|^{N+2s}}\, \ud x \ud y,
$$
which induces the  norm
$$
\|u\|_{X_{0,A}} := \left( \frac{1}{2\pi}\iint_{\mathbb{R}^{2N}}\frac{|u(x)- e^{\mathrm{i} (x-y)\cdot
 A(\frac{x+y}{2})}u(y)|^2}{|x-y|^{N+2s}}\, \ud x \ud y \right)^{1/2}.
$$
Under the scalar product defined above, the space
 $(X_{0,A}, \langle \cdot,\cdot\rangle_{X_{0,A}})$ is a Hilbert space and
hence reflexive.

\medskip

Arguing similar to \cite{AA} and \cite{AS}, we have the following result.
\begin{lem}
\begin{enumerate}
\item [(i)]The space $H^{1/2}_A(\mathbb R, \mathbb C)$ is continuously embedded into $L^r(\mathbb R, \mathbb C)$ for any $r\in [2,\infty)$
and compactly embedded into $L^r_{\mathrm {loc}}(\mathbb R, \mathbb C)$ for any $r\in [1,\infty)$.
\item[(ii)] For any $u\in H^{1/2}_A(\mathbb R, \mathbb C)$, we get $|u|\in H^{1/2}(\mathbb R, \mathbb R)$ and $[|u|]\leq  [u]_A$. Moreover we also have the following pointwise diamagnetic inequality
\[
\big||u(x)|-|u(y)|\big|\leq \left|u(x)-e^{i(x-y)\cdot A(\frac{x+y}{2})}u(y)\right| \;\; a. ~e. \;\;x, y \in\mathbb R.
\]
\item[(iii)] If $u\in H^{1/2}(\mathbb R, \mathbb R)$ and has compact support then $v=e^{iA(0)\cdot x} u\in H^{1/2}_A(\mathbb R, \mathbb C)$.
\end{enumerate}
\end{lem}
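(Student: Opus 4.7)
I would prove the three items in the order (ii), (iii), (i), since the pointwise diamagnetic inequality in (ii) is the principal tool reducing the embeddings in (i) to the scalar fractional Sobolev setting, while (iii) is a largely independent gauge-transformation computation, carried out in the spirit of \cite{AA, AS}.

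\textbf{Part (ii).} The pointwise inequality rests on the elementary $\mathbb{C}$-identity
\[
|z-e^{i\theta}w|^{2}=|z|^{2}+|w|^{2}-2\Re(e^{i\theta}\bar{z}w)\geq(|z|-|w|)^{2},
\]
valid for all $z,w\in\mathbb{C}$ and $\theta\in\mathbb{R}$. Choosing $z=u(x)$, $w=u(y)$, $\theta=(x-y)\cdot A(\tfrac{x+y}{2})$ yields the claimed pointwise bound. Dividing both sides by $|x-y|^{N+2s}$ and integrating over $\mathbb{R}^{2N}$ gives $[|u|]_{H^{s}}\leq[u]_{A}$, and in particular $|u|\in H^{1/2}(\mathbb{R},\mathbb{R})$.

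\textbf{Part (iii).} For $v(x):=e^{iA(0)x}u(x)$ a short calculation gives
\[
v(x)-e^{i(x-y)A(\frac{x+y}{2})}v(y)=e^{iA(0)x}\bigl[u(x)-e^{i\phi(x,y)}u(y)\bigr],\qquad \phi(x,y):=(x-y)[A(\tfrac{x+y}{2})-A(0)],
\]
hence
\[
\bigl|v(x)-e^{i(x-y)A(\frac{x+y}{2})}v(y)\bigr|^{2}\leq 2|u(x)-u(y)|^{2}+2|u(y)|^{2}\,|1-e^{i\phi(x,y)}|^{2}.
\]
After dividing by $|x-y|^{2}$ and integrating, the first term contributes a finite multiple of $[u]_{H^{1/2}}^{2}$. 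For the second, one uses $|1-e^{i\phi}|\leq\min(2,|\phi|)$ together with the Lipschitz bound $|A(\tfrac{x+y}{2})-A(0)|\leq C|\tfrac{x+y}{2}|$ (valid because $A$ is smooth) and splits the domain according to whether $y\in\mathrm{supp}(u)$ or not; compact support of $u$ then yields integrability both near the diagonal and at infinity.

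\textbf{Part (i) and main obstacle.} The continuous embedding $H^{1/2}_{A}\hookrightarrow L^{r}(\mathbb{R})$ for $r\in[2,\infty)$ follows at once from (ii) and the scalar fractional Sobolev embedding in dimension one: $\|u\|_{L^{r}}=\||u|\|_{L^{r}}\leq C\||u|\|_{H^{1/2}(\mathbb{R})}\leq C\|u\|_{H^{1/2}_{A}}$. The delicate point is the local compactness: since $u\mapsto|u|$ is nonlinear and destroys phase information, (ii) alone gives compactness only of $\{|u_{n}|\}$, not of $\{u_{n}\}$ itself. The remedy I would use is to first establish the local inclusion $H^{1/2}_{A}\hookrightarrow H^{1/2}_{\mathrm{loc}}(\mathbb{R},\mathbb{C})$ and then invoke classical Rellich--Kondrachov componentwise on real and imaginary parts. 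Starting from
\[
|u(x)-u(y)|\leq \bigl|u(x)-e^{i(x-y)A(\frac{x+y}{2})}u(y)\bigr|+|u(y)|\,\bigl|1-e^{i(x-y)A(\frac{x+y}{2})}\bigr|,
\]
using $|1-e^{i(x-y)A(\frac{x+y}{2})}|\leq|x-y|\,\|A\|_{L^{\infty}(K')}$ on a compact neighborhood $K'\supset K$, squaring, dividing by $|x-y|^{2}$ and integrating over $K\times K$ yields $[u]_{H^{1/2}(K,\mathbb{C})}\leq C_{K}\,\|u\|_{X_{0,A}}$. This is exactly the local inclusion required; smoothness of $A$ enters only through its $L^{\infty}$-boundedness on compact sets.
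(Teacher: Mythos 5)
Your proposal is correct and follows the standard route of the references \cite{AA,AS} that the paper itself defers to without giving a proof: the elementary identity yielding the pointwise diamagnetic inequality for (ii), the gauge factorization $v(x)-e^{i(x-y)A(\frac{x+y}{2})}v(y)=e^{iA(0)x}[u(x)-e^{i\phi(x,y)}u(y)]$ for (iii), and the local comparison $[u]_{H^{1/2}(K)}\leq C_K\|u\|_{H^{1/2}_A}$ via $|1-e^{i(x-y)A(\frac{x+y}{2})}|\leq |x-y|\,\|A\|_{L^\infty(K')}$ followed by Rellich--Kondrachov, which correctly repairs the fact that (ii) alone would only compactify $\{|u_n|\}$. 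The only imprecision is the claim that smoothness of $A$ gives a global Lipschitz bound $|A(\tfrac{x+y}{2})-A(0)|\leq C|\tfrac{x+y}{2}|$; it does not, but your own splitting (compact support of $u$ plus near/far from the diagonal) only ever uses boundedness of $A$ on a compact neighborhood, so the argument stands.
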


As discussed in the introduction, the problems of the type $(P)_\lambda$ are motivated by the following version of the Trudinger-Moser inequality, which is a consequence of the results proved by Ozawa \cite{Ozawa},
Kozono, Sato and Wadade \cite{Wadade}, Martinazzi \cite{Martinazzi} and Takahashi \cite{Takahasi}.
	
	\begin{lem}\label{Trudinger-Moser1} If $\alpha >0$ and $u\in X_{0,0}$, it holds
		\begin{equation*}\label{TM11}
		e^{\alpha u^2} \in L^1(\Omega).
		\end{equation*}
		Moreover,
		\begin{equation*}
		\sup_{\{u \in X_{0,0}\,:\, \|u\|_{1/2,2} \leq 1\}}\int_\Omega e^{\alpha u^{2}} \, \ud x < \infty,
		\label{TM21}
		\end{equation*}
		for all $0 \leq \alpha  \leq \pi$, where
		\[
		\|u\|_{1/2,2} := \left(\dfrac{1}{2\pi}\displaystyle\int_{\mathbb R^2}
		\dfrac{(u(x)-u(y))^{2}}{|x-y|^{2}} \, \ud x\, \ud y  \right)^{1/2}.
		\]
	\end{lem}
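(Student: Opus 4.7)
The lemma is the (1D) fractional Trudinger--Moser inequality, and the natural plan is to reduce it to classical Moser--Trudinger via a harmonic extension, while handling the pointwise integrability by Taylor expansion. I would split the argument in two steps, corresponding to the two assertions in the statement.

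\medskip

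\textbf{Step 1: pointwise integrability.} To show $e^{\alpha u^2}\in L^1(\Omega)$ for every fixed $\alpha>0$ and every $u\in X_{0,0}$, I would use the power-series expansion
\[
\int_\Omega e^{\alpha u^2}\,\ud x \;=\; \sum_{k=0}^{\infty}\frac{\alpha^k}{k!}\int_\Omega u^{2k}\,\ud x,
\]
and then exploit the continuous embedding $X_{0,0}\hookrightarrow L^{2k}(\Omega)$ for every $k\geq1$ (already recorded in the preceding lemma). Together with a Stirling-type bound $k!\gtrsim (k/e)^k$ and the explicit rate of growth of the embedding constants in dimension one, the series is finite for each fixed $u$; note that at this stage we do not need a uniform bound.

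\medskip

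\textbf{Step 2: uniform bound with sharp constant $\pi$.} The plan is to pass from the fractional Gagliardo seminorm $\|u\|_{1/2,2}$ to a Dirichlet integral on the half-plane. Given $u\in X_{0,0}$, let $\tilde u(x,y)$ be its Poisson extension to $\mathbb R^2_+$. Because of the normalizing factor $\frac{1}{2\pi}$ appearing in the definition of $\|u\|_{1/2,2}$, the standard trace-extension identity yields
\[
\|\nabla \tilde u\|_{L^2(\mathbb R^2_+)}^2 \;=\; \tfrac{1}{2}\|u\|_{1/2,2}^{\,2}.
\]
In particular, if $\|u\|_{1/2,2}\leq 1$ then the harmonic extension belongs to the Dirichlet class of total energy $\le 1/2$. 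Since $u=0$ outside $\Omega=(-1,1)$, one can use a fixed cut-off to replace $\tilde u$ by a compactly supported function $w$ in a bounded domain $D\subset \mathbb R^2$ containing $\Omega\times\{0\}$, without enlarging the Dirichlet energy too much; the trace of $w$ on the real axis still agrees with $u$ on $\Omega$. Then the classical Moser--Trudinger inequality on $D$ with sharp exponent $4\pi$ gives $\int_D e^{4\pi w^2/\|\nabla w\|_2^2}\,\ud x\,\ud y \leq C|D|$. Specializing to the one-dimensional trace and using the relation $\|\nabla \tilde u\|_2^2 \leq \tfrac12$ turns the threshold $4\pi$ into $4\pi\cdot\tfrac12=2\pi$ for $u^2$ in two dimensions; a final application of a one-dimensional trace Moser inequality (à la Takahashi/Martinazzi) converts this into the sharp exponent $\pi$ on the real line for $\int_\Omega e^{\alpha u^2}\,\ud x$.

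\medskip

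\textbf{Where the difficulty lies.} The pointwise statement is a straightforward series argument; the serious content is the sharp constant $\alpha\leq \pi$ in the uniform estimate. Tracking the normalization faithfully through the harmonic extension and through the Moser rearrangement argument is what requires care, and this is exactly the content of the works of Ozawa, Kozono--Sato--Wadade, Martinazzi and Takahashi cited in the paper. Once the sharp 2D Moser--Trudinger inequality on $\mathbb R^2_+$ with Dirichlet trace on $\partial\mathbb R^2_+$ is available, the 1D statement for $X_{0,0}$ with threshold $\pi$ follows by the chain of identifications described above; the suppression of the magnetic phase is legitimate here because $X_{0,0}$ is the $A=0$ space, so no diamagnetic step is needed in this lemma.
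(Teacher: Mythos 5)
The paper does not actually prove this lemma: it is stated as a known result, ``a consequence of the results proved by Ozawa, Kozono--Sato--Wadade, Martinazzi and Takahashi,'' and the only thing the paper proves itself is the magnetic version (Lemma~\ref{lem}) derived from it via the diamagnetic inequality. Your proposal goes further and sketches an actual proof, but both steps have genuine gaps. In Step~1, the power-series argument does not give $e^{\alpha u^2}\in L^1(\Omega)$ for \emph{every} $\alpha>0$ and \emph{every} $u$: the sharp one-dimensional embedding is $\|u\|_{L^{2k}}\leq C\sqrt{2k}\,\|u\|_{1/2,2}$, so the $k$-th term of your series is of order $\bigl(2e\alpha C^{2}\|u\|_{1/2,2}^{2}\bigr)^{k}k^{-1/2}$, and the series diverges as soon as $\alpha\|u\|_{1/2,2}^{2}$ exceeds $1/(2eC^{2})$. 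The series/Stirling argument is exactly Ozawa's route to the \emph{uniform} bound for small $\alpha$; to get the pointwise statement for arbitrary $\alpha$ and $u$ one needs the density splitting $e^{\alpha u^{2}}\leq \tfrac12 e^{4\alpha(u-\varphi)^{2}}+\tfrac12 e^{4\alpha\varphi^{2}}$ with $\varphi\in C_0^\infty(\Omega)$ and $4\alpha\|u-\varphi\|_{1/2,2}^{2}<\pi$ --- which is precisely the argument the paper uses in the proof of Lemma~\ref{lem}, and which you should import here.

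In Step~2 the reduction to the classical two-dimensional Moser--Trudinger inequality does not close. First, the interior inequality on a bounded domain $D$ controls $\int_{D}e^{4\pi w^{2}/\|\nabla w\|_{2}^{2}}$ for $w\in H^{1}_{0}(D)$, i.e.\ an integral over the two-dimensional domain; what is needed here is exponential integrability of the \emph{boundary trace} on the one-dimensional set $\Omega\times\{0\}$, which is a different (trace-type) inequality and does not follow by ``specializing to the one-dimensional trace.'' Second, the cut-off multiplying the Poisson extension necessarily increases the Dirichlet energy by a factor strictly larger than one (or introduces lower-order terms that cannot be absorbed at the endpoint), so the sharp threshold cannot survive this localization. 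Your final sentence then invokes ``a one-dimensional trace Moser inequality (\`a la Takahashi/Martinazzi)'' --- but that \emph{is} the statement of the present lemma, so the argument becomes circular exactly where the sharp constant $\pi$ has to be produced. (As a minor point, with the paper's normalization $\|u\|_{1/2,2}^{2}=\|(-\Delta)^{1/4}u\|_{L^{2}}^{2}$ equals the full Dirichlet energy of the harmonic extension, not half of it.) The honest options are either to cite the lemma, as the paper does, or to reproduce the rearrangement/capacity proof of the sharp half-line trace inequality from Martinazzi or Takahashi; a soft reduction to the interior two-dimensional inequality will not yield the constant $\pi$.
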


	\begin{lem}\label{lem}
		If $0\leq \alpha  \leq \pi$, it holds
		\begin{equation}\label{Corolario1}
		\sup_{\{u \in X_{0,A}\,:\, \|u\|_{X_{0,A}} \leq 1\}}\int_\Omega e^{\alpha |u|^{2}} \, \ud x < \infty.
		\end{equation}
		Moreover, for any $\alpha >0$ and $u \in X_{0,A}$,
		\begin{equation}\label{lema2.3}
		e^{\alpha |u|^{2}} \in L^{1}(\Omega).
		\end{equation}
	\end{lem}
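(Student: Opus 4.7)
The plan is to obtain this magnetic Trudinger--Moser inequality as a direct corollary of the scalar version (Lemma \ref{Trudinger-Moser1}) by invoking the pointwise diamagnetic inequality from part (ii) of the preceding lemma. Given any $u \in X_{0,A}$, set $v := |u|$. Then $v$ is real-valued and vanishes on $\mathbb{R}\setminus\Omega$, and the diamagnetic inequality
$$
\bigl||u(x)|-|u(y)|\bigr| \leq \bigl|u(x)-e^{i(x-y)\cdot A(\frac{x+y}{2})}u(y)\bigr|
$$
gives, after squaring, dividing by $|x-y|^{2}$, integrating over $\mathbb{R}^{2}$ and multiplying by $1/(2\pi)$, the norm comparison $\|v\|_{1/2,2}^{2} \leq \|u\|_{X_{0,A}}^{2}$. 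Hence $v \in X_{0,0}$ with norm controlled by that of $u$.

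For \eqref{Corolario1}, if $\|u\|_{X_{0,A}} \leq 1$, then $\|v\|_{1/2,2} \leq 1$; using $|u|^{2}=v^{2}$ pointwise, for every $0 \leq \alpha \leq \pi$ we would write
$$
\int_{\Omega} e^{\alpha |u|^{2}}\, \ud x = \int_{\Omega} e^{\alpha v^{2}}\, \ud x \leq \sup_{\{w\in X_{0,0}\,:\,\|w\|_{1/2,2}\leq 1\}} \int_{\Omega} e^{\alpha w^{2}}\, \ud x,
$$
and the right-hand side is finite by Lemma \ref{Trudinger-Moser1}. Since this bound is independent of $u$, taking the supremum over all admissible $u \in X_{0,A}$ on the left yields the claimed uniform bound.

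For the second assertion \eqref{lema2.3}, fix $\alpha>0$ and $u \in X_{0,A}$; the case $u = 0$ is trivial, so assume $u \neq 0$. Again $v = |u| \in X_{0,0}$, and the identity $e^{\alpha|u|^{2}} = e^{\alpha v^{2}}$ reduces the claim to the unnormalized half of Lemma \ref{Trudinger-Moser1}, which guarantees $e^{\alpha w^{2}} \in L^{1}(\Omega)$ for every $\alpha > 0$ and every $w \in X_{0,0}$.

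Since both conclusions are essentially one-line consequences of already-proved facts, I do not expect a genuine obstacle. The only point requiring care is the passage from the pointwise diamagnetic bound (on complex-valued differences) to the norm inequality $\|v\|_{1/2,2} \leq \|u\|_{X_{0,A}}$; this is handled cleanly by monotonicity of squaring and Fubini.
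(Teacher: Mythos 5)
Your argument for \eqref{Corolario1} is exactly the paper's: the pointwise diamagnetic inequality yields $\||u|\|_{1/2,2}\leq\|u\|_{X_{0,A}}$, and the uniform bound then follows from the scalar inequality of Lemma \ref{Trudinger-Moser1}; note also that $|u|$ is real, lies in $H^{1/2}$ by part (ii) of the embedding lemma, and vanishes off $\Omega$, so it is indeed an admissible competitor in $X_{0,0}$. For \eqref{lema2.3} you take a shorter route than the paper. You simply apply the first (unnormalized) assertion of Lemma \ref{Trudinger-Moser1} to $w=|u|\in X_{0,0}$, which is legitimate as that lemma is stated and costs one line. The paper instead re-derives this integrability from the \emph{uniform} bound alone: it picks $\varphi\in C_0^\infty(\Omega)$ with $\||u|-\varphi\|_{1/2,2}<\varepsilon$, splits $e^{\alpha|u|^2}\leq\tfrac12 e^{4\alpha(|u|-\varphi)^2}+\tfrac12 e^{4\alpha\varphi^2}$, chooses $\varepsilon$ so that $4\alpha\varepsilon^2<\pi$, and controls the first term by the normalized supremum and the second by boundedness of $\varphi$. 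The paper's version is self-contained modulo density of $C_0^\infty(\Omega)$ in $X_{0,0}$ and does not lean on the subcritical integrability statement as a black box; yours is more economical but inherits whatever proof underlies that first assertion of Lemma \ref{Trudinger-Moser1}. Both are correct.
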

	
	\begin{proof} The estimating \eqref{Corolario1} follows from $\||u|\|_{1/2,2} \leq \|u\|_{0,A}$ and Lemma \ref{Trudinger-Moser1}.
		Now we prove the second part of the lemma. Given $u \in X_{0,A}$ and
		$\varepsilon>0$, there exists $\varphi \in C_{0}^{\infty}(\Omega)$
		such that $\||u|-\varphi\|_{1/2,2}<\varepsilon$. Since
		\begin{equation*}
		e^{\alpha |u|^{2}} \leq
		e^{\alpha(2(|u|-\varphi)^{2}+2\varphi^{2})} \leq
		\dfrac{1}{2}e^{4\alpha
			(|u|-\varphi)^{2}}+\dfrac{1}{2}e^{4\alpha
			\varphi^{2}},
		\end{equation*}
		it follows that
		\begin{equation}\label{jjjj1}
		\int_\Omega e^{\alpha |u|^{2}} \, \ud x \leq
		\dfrac{1}{2}\int_\Omega e^{4\alpha\||u|-\varphi\|_{1/2,2}^{2}\left(\frac{|u|-\varphi}{\||u|-\varphi\|_{1/2,2}}\right)^{2}}
		\, \ud x + \dfrac{1}{2}\int_\Omega e^{4\alpha\varphi^{2}} \,
		\ud x.
		\end{equation}
		Choosing $\varepsilon >0$ such that ${4\alpha\varepsilon^{2}} <
		\pi$, we have ${4\alpha\||u|-\varphi\|_{1/2,2}^{2}}<
		\pi$. Then, from Lemma \ref{Trudinger-Moser1} and
		\eqref{jjjj1}, we obtain
		\begin{equation*}
		\int_\Omega e^{\alpha |u|^{2}} \, \ud x \leq \dfrac{C}{2} +
		\dfrac{1}{2} \int_{\textrm{supp}(\varphi)}e^{4\alpha \varphi^{2}} \,
		\ud x< \infty.
		\end{equation*}
		Thus, the proof is complete.
	\end{proof}

	\begin{definition}
		We say that a function $u \in X_{0,A}$
		is a weak solution of $(P)_\lambda$ if  
		$$
		\langle u,\varphi \rangle_{X_{0,A}}= \lambda \Re \int_{\Omega}u(x)~ \overline{\varphi(x)}\, \ud x
		+ \Re \int_{\Omega}f(|u|)~u~ \overline{\varphi(x)}\, \ud x,
		$$
		for every $\varphi \in X_{0,A}$.
	\end{definition}
	Clearly, the weak solutions of $(P)_\lambda$ are the critical points of
	the Euler--Lagrange functional $\mathcal{I}_{A,\lambda}:X_{0,A} \to \mathbb{R}$,
	associated with $(P)_\lambda$, that is
	\begin{equation}\label{Jlam}
	\mathcal{I}_{A,\lambda}(u):= \frac12\|u\|_{X_{0,A}}^2
	- \frac{\lambda}{2}\|u\|_{2}^2
	- \int_\Omega F(|u|) \ud x,
	\end{equation}
	where $F(t)=\int_0^t f(s)s\ud s$. By using our assumptions and Lemma \ref{lem}, it is easy to see that $\mathcal{I}_{A,\lambda}$ is well-defined and of
	class $C^1(X_{0,A}, \mathbb{R})$.

	The next lemma will be used to ensure the geometry of the
	functional $\mathcal{I}_{A,\lambda}$.
	
	\begin{lem}
		If $v \in X_{0,A}$, $\alpha >0$, $q>2$ and $\|v\|_{X_{0,A}}\leq M$ with $\alpha
		M^{2}<\pi$, then there exists $C=C(\alpha,M,q)>0$
		such that
		\begin{equation*}
		\int_\Omega e^{\alpha |v|^{2}}|v|^{q}\ud x \leq C\|v\|_{X_{0,A}}^{q}.
		\end{equation*}
	\end{lem}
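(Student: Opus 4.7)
The plan is to split the integral via Hölder's inequality, absorb the exponential factor through the magnetic Trudinger--Moser inequality of Lemma \ref{lem}, and use the continuous Sobolev-type embedding $X_{0,A} \hookrightarrow L^p(\Omega)$ for the remaining polynomial factor.

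First I would pick $r > 1$ sufficiently close to $1$ so that $r\alpha M^2 < \pi$; this is possible by the strict inequality $\alpha M^2 < \pi$. Let $r'$ denote its Hölder conjugate. By Hölder's inequality,
\begin{equation*}
\int_\Omega e^{\alpha |v|^2}|v|^q \, \ud x
\leq \left(\int_\Omega e^{r\alpha |v|^2} \, \ud x\right)^{1/r}
\left(\int_\Omega |v|^{qr'} \, \ud x\right)^{1/r'}.
\end{equation*}

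For the first factor, if $v \ne 0$, write
\begin{equation*}
r\alpha |v|^2 = r\alpha \|v\|_{X_{0,A}}^2 \left(\frac{|v|}{\|v\|_{X_{0,A}}}\right)^{\!2}
\leq r\alpha M^2 \left(\frac{|v|}{\|v\|_{X_{0,A}}}\right)^{\!2}.
\end{equation*}
Since $w := v/\|v\|_{X_{0,A}}$ satisfies $\|w\|_{X_{0,A}} = 1$ and $r\alpha M^2 \leq \pi$, Lemma \ref{lem} gives a constant $C_1 = C_1(\alpha, M, r)$ with $\int_\Omega e^{r\alpha|v|^2}\,\ud x \leq C_1$, uniformly in $v$ with $\|v\|_{X_{0,A}} \leq M$. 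For the second factor, the continuous embedding $X_{0,A} \hookrightarrow L^{qr'}(\Omega)$ (valid for all finite exponents, as recalled after \eqref{Holder} and in the lemma preceding Lemma \ref{Trudinger-Moser1}) yields
\begin{equation*}
\left(\int_\Omega |v|^{qr'} \, \ud x\right)^{1/r'} = \|v\|_{qr'}^{q} \leq C_2^{q}\,\|v\|_{X_{0,A}}^{q},
\end{equation*}
for some constant $C_2 = C_2(q, r)$. Combining the two bounds and setting $C := C_1^{1/r} C_2^q$ produces the required inequality.

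The only delicate step is the choice of $r$, which uses the strict gap between $\alpha M^2$ and $\pi$; once $r$ is fixed, the rest is routine Hölder plus the magnetic Trudinger--Moser estimate. No additional structural hypothesis on $A$ or on $\Omega$ is needed beyond what already guarantees Lemma \ref{lem} and the Sobolev embedding.
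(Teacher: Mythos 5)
Your proposal is correct and follows essentially the same route as the paper: choose $r>1$ close to $1$ with $r\alpha M^{2}<\pi$, apply H\"older's inequality, control the exponential factor by normalizing $v$ and invoking the magnetic Trudinger--Moser bound \eqref{Corolario1}, and bound the remaining factor via the continuous embedding $X_{0,A}\hookrightarrow L^{r'q}(\Omega)$. No further comment is needed.
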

	
	\begin{proof}
		Taking $r > 1$ close to $1$ such that $\alpha r M^{2}<\pi $ and $r'q \geq 1$, where $r'={r}/{(r-1)}$. By H\"{o}lder's inequality, we have
		\begin{equation*}\label{exponential11}
		\int_\Omega e^{\alpha |v|^{2}}|v|^{q}\ud x \leq
		\left(\int_\Omega e^{\alpha r |v|^{2}}\ud
		x\right)^{1/r}\|v\|_{r'q}^{q} = \left(\int_\Omega e^{\alpha r \|v\|_{X_{0,A}}^{2}
			\left(\frac{|v|}{\|v\|_{X_{0,A}}}\right)^{2}}\,\ud x \right)^{1/r}
		\|v\|_{r'q}^q.
		\end{equation*}
		Since $\alpha r
		M^{2}<\pi$, it follows from \eqref{Corolario1} and the
		continuous embedding $X_{0,A} \hookrightarrow L^{r'q}(\Omega)$, that
		\[
		\int_\Omega e^{\alpha |v|^{2}}|v|^{q}\,\ud x \leq C\|v\|_{X_{0,A}}^q.
		\]
		Thus, the proof is complete.
	\end{proof}
	
	\bigskip
	
	We will show a refinement of
	\eqref{Corolario1}. This result will be crucial to show that the
	functional $\mathcal{I}_{A,\lambda}$ satisfies the Palais-Smale condition.

	\begin{lem}\label{tipolions}(P. L. Lions' concentration compactness result)
		If $(v_{n})$ is a sequence in $X_{0,A}$ with $\|v_{n}\|_{X_{0,A}}=1$ for all $n
		\in \mathbb{N}$ and $v_{n}\rightharpoonup v$ weakly in $X_{0,A}$,
		$0<\|v\|_{X_{0,A}}<1$, then for all
		$0<t<\pi(1-\|v\|_{X_{0,A}}^{2})^{-1}$, we have
		\begin{equation*}
		\sup_{n} \int_\Omega e^{t|v_{n}|^{2}}\,\ud x<\infty.
		\end{equation*}
	\end{lem}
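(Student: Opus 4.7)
I would adapt the classical Lions concentration-compactness argument to the magnetic fractional setting. The idea is to split $t|v_n|^2$ into a ``fixed'' contribution carried by $v$ and a ``critical'' contribution carried by $v_n - v$, decouple the two via H\"older's inequality, control the fixed contribution via part \eqref{lema2.3} of Lemma \ref{lem} (which gives $L^1$-integrability of $e^{\alpha|v|^2}$ for \emph{every} $\alpha>0$), and control the critical contribution via the sharp magnetic Trudinger--Moser bound \eqref{Corolario1} after exploiting the Hilbert-space identity $\|v_n - v\|_{X_{0,A}}^2 \to 1 - \|v\|_{X_{0,A}}^2$.

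\textbf{Decomposition.} For any $\epsilon>0$, the elementary inequality $|a+b|^2 \leq (1+\epsilon^{-1})|a|^2 + (1+\epsilon)|b|^2$, valid for all $a,b\in\mathbb{C}$, applied pointwise with $a=v(x)$ and $b=v_n(x)-v(x)$ yields
\[
|v_n(x)|^2 \leq (1+\epsilon^{-1})|v(x)|^2 + (1+\epsilon)|v_n(x)-v(x)|^2.
\]
Choosing conjugate exponents $r,r'>1$ and applying H\"older,
\[
\int_\Omega e^{t|v_n|^2}\,\ud x \leq \left(\int_\Omega e^{tr(1+\epsilon)|v_n-v|^2}\,\ud x\right)^{1/r} \left(\int_\Omega e^{tr'(1+\epsilon^{-1})|v|^2}\,\ud x\right)^{1/r'}.
\]
The second factor is finite, uniformly in $n$, by \eqref{lema2.3} applied to the fixed element $v\in X_{0,A}$. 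For the first factor, since $X_{0,A}$ is a Hilbert space and $v_n \rightharpoonup v$ with $\|v_n\|_{X_{0,A}}=1$, I have
\[
\|v_n-v\|_{X_{0,A}}^2 = 1 - 2\Re\langle v_n,v\rangle_{X_{0,A}} + \|v\|_{X_{0,A}}^2 \longrightarrow 1 - \|v\|_{X_{0,A}}^2 =: \mu^2 > 0.
\]
Normalizing $\phi_n := (v_n-v)/\|v_n-v\|_{X_{0,A}}$ (well defined for large $n$) gives $\|\phi_n\|_{X_{0,A}}=1$ and rewrites the first factor as $\int_\Omega e^{\alpha_n|\phi_n|^2}\,\ud x$ with $\alpha_n := tr(1+\epsilon)\|v_n-v\|_{X_{0,A}}^2 \to tr(1+\epsilon)\mu^2$.

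\textbf{Parameter choice and main obstacle.} Given $t<\pi/\mu^2$, set $K:=\pi/(t\mu^2)>1$. Since $(r,\epsilon)\mapsto r(1+\epsilon)$ is continuous and equals $1$ at $(1,0)$, I can choose $r>1$ sufficiently close to $1$ and $\epsilon>0$ sufficiently small that $r(1+\epsilon)<K$. Then $\alpha_n<\pi$ for $n$ large, and \eqref{Corolario1} forces $\sup_n \int_\Omega e^{\alpha_n|\phi_n|^2}\,\ud x < \infty$; combined with the finite second factor this proves the claim. The only delicate step is exactly this balancing: driving $r\to 1$ forces $r'\to\infty$ and so inflates the exponent $tr'(1+\epsilon^{-1})$ on $|v|^2$, so it is essential that \eqref{lema2.3} permits \emph{arbitrarily large} $\alpha>0$ when evaluated on the fixed function $v\in X_{0,A}$; without this flexibility the proof would fail. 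Beyond this, the argument is a direct transcription of the classical Lions scheme with the magnetic Trudinger--Moser inequality \eqref{Corolario1} in place of the usual one.
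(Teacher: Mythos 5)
Your proof is correct and follows essentially the same route as the paper: the same pointwise splitting of $|v_n|^2$ into a piece carried by $v$ and a piece carried by $v_n-v$, the same Hilbert-space identity giving $\|v_n-v\|_{X_{0,A}}^2\to 1-\|v\|_{X_{0,A}}^2$, the sharp bound \eqref{Corolario1} for the difference term and \eqref{lema2.3} for the fixed term. The only (cosmetic) difference is that you decouple the two exponentials with H\"older's inequality, whereas the paper uses the convexity of $e^x$ to write $e^{ab}\leq \frac1q e^{qa}+\frac1r e^{rb}$; both work identically here.
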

	\begin{proof}
		Since $v_{n}\rightharpoonup v$ weakly in $X_{0,A}$ and $\|v_{n}\|_{X_{0,A}}=1$, we get
		\begin{equation*}
		\|v_{n}-v\|_{X_{0,A}}^{2}=1-2\langle v_{n},v \rangle_{X_{0,A}}+\|v\|_{X_{0,A}}^{2}\rightarrow
		1-\|v\|_{X_{0,A}}^{2}<\dfrac{\pi}{t}.
		\end{equation*}
		Thus, for $n \in \mathbb{N}$ enough large, we have
		$t\|v_{n}-v\|_{X_{0,A}}^{2}<\pi$. Thus, we may choose $q>1$
		close to 1 and $\varepsilon>0$ satisfying
		\begin{equation}\label{ddddjj12}
		qt(1+\varepsilon^{2})\|v_{n}-v\|_{X_{0,A}}^{2}<\pi,
		\end{equation}
		for $n \in \mathbb{N}$ enough large. By \eqref{Corolario1} and
		\eqref{ddddjj12}, there exists $C>0$ such that
		\begin{equation}\label{26}
		\int_\Omega e^{qt(1+\varepsilon^{2})|v_{n}-v|^{2}} \, \ud x=
		\int_\Omega e^{qt(1+\varepsilon)^{2}\|v_{n}-v\|_{X_{0,A}}^{2}\left(\frac{|v_{n}-v|}{\|v_{n}-v\|_{X_{0,A}}}
			\right)^{2}}
		\, \ud x \leq C.
		\end{equation}
		Moreover, since
		\begin{equation*}
		t|v_{n}|^{2}\leq
		t(1+\varepsilon^{2})|v_{n}-v|^{2}+t\left(1+\dfrac{1}{\varepsilon^{2}}\right)|v|^{2},
		\end{equation*}
		it follows by convexity of the exponential function with
		$q^{-1}+r^{-1}=1$ that
		\begin{eqnarray*}
			e^{t|v_{n}|^{2}}\leq
			\dfrac{1}{q}e^{qt(1+\varepsilon^{2})|v_{n}-v|^{2}}+\dfrac{1}{r}e^{rt(1+1/\varepsilon^{2})|v|^{2}}.
		\end{eqnarray*}
		Therefore, by \eqref{lema2.3} and \eqref{26}, we get
		\begin{equation*}
		\int_\Omega e^{t|v_{n}|^{2}}\, \ud x \leq
		\dfrac{1}{q}\int_\Omega e^{qt(1+\varepsilon^{2})|v_{n}-v|^{2}} \,
		\ud x + \int_\Omega e^{rt(1+1/\varepsilon^{2})|v|^{2}}\, \ud x \leq
		C,
		\end{equation*}
		and the result is proved.
	\end{proof}
	
\section{Palais-Smale sequence analysis}

In this section, we will study the definition and properties of Palais-Smale sequence and its precompactness. We begin by recalling the following definition of Palais-Smale sequence.

\begin{definition}
$\{u_n\}\subset X_{0, A}$ is called a Palais-Smale sequence for $\mathcal I_{A, \lambda}$ at a level $c$ (in short $(PS)_c$ sequence) if
\[
\mathcal I_{A,\lambda}(u_n)\to c\;\;\text{ and } \mathcal I_{A, \lambda}^\prime (u_n)\to 0\;\;\text{ as}\;\; n\to \infty.
\]
We say that $\mathcal I_{A,\lambda}$ satisfies Palais-Smale condition at level $c$ if any $(PS)_c$ sequence admits a convergent subsequence in $X_{0, A}$.
\end{definition}

\begin{lem}\label{PS11}
Assume $(H_1)$.
Let $(u_n) \subset X_{0,A}$ be a $(PS)_c$ sequence of $I_\lambda$. Then $(u_n)$ is a bounded in $X_{0,A}$.
\end{lem}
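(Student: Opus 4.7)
The plan is to combine the two $(PS)_c$ relations in the standard Ambrosetti--Rabinowitz manner, bootstrap $(H_1)$ into a one-sided bound on $\int_\Omega f(|u_n|)|u_n|^2\,\ud x$, and then derive a contradiction from the critical exponential growth \eqref{CG} in case $\|u_n\|_{X_{0,A}}\to\infty$. Writing $\|\cdot\|$ for $\|\cdot\|_{X_{0,A}}$, the hypothesis reads $\mathcal{I}_{A,\lambda}(u_n)=\tfrac12\|u_n\|^2-\tfrac{\lambda}{2}\|u_n\|_2^2-\int_\Omega F(|u_n|)\,\ud x=c+o(1)$ and $\langle\mathcal{I}_{A,\lambda}'(u_n),u_n\rangle=\|u_n\|^2-\lambda\|u_n\|_2^2-\int_\Omega f(|u_n|)|u_n|^2\,\ud x=o(\|u_n\|)$. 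Forming $2\mathcal{I}_{A,\lambda}(u_n)-\langle\mathcal{I}_{A,\lambda}'(u_n),u_n\rangle$ cancels the quadratic part and leaves
\[
\int_\Omega\bigl(f(|u_n|)|u_n|^2-2F(|u_n|)\bigr)\,\ud x = 2c+o(1)+o(\|u_n\|).
\]

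Next I would exploit $(H_1)$ in a slightly enhanced form. Since $F(t)\leq Mf(t)t=(M/t)f(t)t^2$ for $t\geq t_0$, after enlarging $t_0$ to $\max\{t_0,4M\}$ one has $F(t)\leq\tfrac14 f(t)t^2$ on $\{t\geq t_0\}$; hence $\tfrac12 f(t)t^2-F(t)\geq\tfrac14 f(t)t^2$ there, while this quantity is bounded below by some constant on $[0,t_0]$ by continuity of $f$ and $F$. Inserting these into the identity above and splitting the integration region yields the crucial one-sided bound
\[
\int_\Omega f(|u_n|)|u_n|^2\,\ud x \leq C+o(\|u_n\|).
\]

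Now I argue by contradiction: suppose $\|u_n\|\to\infty$ and set $w_n:=u_n/\|u_n\|$. Along a subsequence, $w_n\rightharpoonup w$ in $X_{0,A}$, $w_n\to w$ strongly in $L^2(\Omega)$ via the compact embedding in Lemma~2.1, and $w_n\to w$ a.e. Dividing the derivative identity by $\|u_n\|^2$ and observing that the step-two bound forces $\int_\Omega f(|u_n|)|u_n|^2\,\ud x/\|u_n\|^2\to 0$, I obtain $1-\lambda\|w\|_2^2=0$, that is, $\|w\|_2^2=1/\lambda>0$, so $w\not\equiv 0$.

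To reach the contradiction I invoke \eqref{CG}. On the positive-measure set $\{w\neq 0\}$, Egorov's theorem provides a measurable subset $F_0$ with $|F_0|>0$ and a $\delta>0$ such that $|w_n|\geq\delta$ uniformly on $F_0$ for $n$ large; consequently $|u_n|\geq\delta\|u_n\|\to+\infty$ uniformly on $F_0$. Fixing any $\beta\in(0,\beta_0)$, \eqref{CG} yields $f(t)t\geq Ke^{\beta t^2}$ for $t$ large, with $K$ arbitrarily large. Thus on $F_0$, for large $n$,
\[
f(|u_n|)|u_n|^2 \geq K\delta\|u_n\|\exp\!\bigl(\beta\delta^2\|u_n\|^2\bigr),
\]
and so $\int_\Omega f(|u_n|)|u_n|^2\,\ud x/\|u_n\|\to+\infty$, which contradicts the step-two bound. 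The most delicate step is precisely this last one: one has to show that the quantitative exponential lower bound coming from the critical growth beats the $o(\|u_n\|)$ remainder inherited from the $(PS)_c$ estimates, and it is exactly here that critical exponential growth (rather than a mere polynomial or subcritical assumption) is essential.
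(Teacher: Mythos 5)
Your argument is correct, but it takes a genuinely different route from the paper's after the common first step. You and the paper both start by forming $2\mathcal I_{A,\lambda}(u_n)-\langle \mathcal I_{A,\lambda}'(u_n),u_n\rangle$ and using $(H_1)$ (sharpened to $F(t)\le \tfrac14 f(t)t^2$ for large $t$) to obtain the one-sided bound $\int_\Omega f(|u_n|)|u_n|^2\,\ud x\le C+o(\|u_n\|_{X_{0,A}})$, which is exactly the paper's \eqref{EJ26}. From there the paper stays linear-algebraic: it splits into the cases $0<\lambda<\lambda_1$ and $\lambda_k<\lambda<\lambda_{k+1}$, and in the second case decomposes $u_n=u_n^{k}+u_n^{\perp}$ along $H_k\oplus H_k^{\perp}$, tests $\mathcal I_{A,\lambda}'(u_n)$ against each component, and exploits the variational characterizations \eqref{EVL2} of $\lambda_k$, $\lambda_{k+1}$ together with the equivalence of norms on the finite-dimensional $H_k$ (the bound $\|u_n^{k}\|_\infty\le C\|u_n^{k}\|_{X_{0,A}}$). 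You instead normalize, extract a weak limit $w$ of $u_n/\|u_n\|_{X_{0,A}}$ with $\|w\|_2^2=1/\lambda>0$, and rule out $\|u_n\|_{X_{0,A}}\to\infty$ by an Egorov argument combined with a pointwise superlinear lower bound on $f(t)t^2$. Your route buys a cleaner proof with no case distinction and no spectral decomposition (it does not even use that $\lambda$ avoids the spectrum, only $\lambda>0$), at the price of invoking a growth lower bound on $f$: you take it from \eqref{CG}, which is a standing assumption in the critical setting but is not among the lemma's stated hypotheses and fails in the subcritical setting \eqref{SG} where the lemma is also implicitly reused. This is harmless, because the bound you actually need, namely $f(t)t^2/t\to+\infty$, already follows from $(H_1)$ together with the positivity of $F$ in $(H_2)$: since $F(t)\le M F'(t)$ for $t\ge t_0$, one gets $F(t)\ge F(t_0)e^{(t-t_0)/M}$ and hence $f(t)t^2\ge c\,t\,e^{t/M}$, which is the source of the paper's estimate \eqref{implicationH1}. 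For the same reason, your closing remark that critical exponential growth is ``essential'' in the last step is an overstatement: any superlinear lower bound on $f(t)t^2$ suffices there, and the paper's own proof of this lemma uses no growth lower bound at all, only the spectral gap.
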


\begin{proof}
Let $(u_n) \subset X_{0,A}$ be a $(PS)_c$ sequence of $I_\lambda$, that is,
\begin{equation}\label{EJ23}
\frac{1}{2}\|u_n\|_{X_{0,A}}^2 - \frac{\lambda}{2}\|u_n\|_2^2 - \int_{\Omega} F(|u_n|) \ud x \to c,\,\, \mbox{as}\,\, n \to +\infty,
\end{equation}
and
\begin{equation}\label{EJ24}
\left|\Re\langle u_n, v \rangle_{X_{0,A}} - \lambda \Re\langle u_n , v\rangle_{L^2} - \Re\int_{\Omega} f(|u_n|) u_n \overline{v} \ud x \right| \leq \varepsilon_n \|v\|_{X_{0,A}},\,\, \mbox{for all}\,\,
v \in X_{0,A},
\end{equation}
where $\varepsilon_n \to 0$ as $n \to +\infty$. It follows from $(H_1)$, that there exists $t_1 >0$ such that
\begin{equation}\label{EJ231}
F(t) \leq  \frac{1}{4}f(t) t^2,\,\,\mbox{for all}\,\, t \geq t_1.
\end{equation}
Using \eqref{EJ23} - \eqref{EJ231} and $v=u_n$ as test function, we can find $C>0$ such that
\begin{equation}\label{EJ26}
\int_{\Omega} f(|u_n|) |u_n|^2  \ud x \leq C + 2 \varepsilon_n \|u_n\|_{X_{0,A}}.
\end{equation}
To complete the proof, we consider two cases.

\medskip

\noindent\textbf{Case 1:} $0< \lambda < \lambda_1$

\medskip

\noindent From \eqref{EJ24}, \eqref{EJ26} and the variational characterization of $\lambda_1$, we have the following estimate for $\|u_n\|_{X_{0,A}}$,
\[
\left(\frac{\lambda_1- \lambda}{\lambda_1} \right)\|u_n\|_{X_{0,A}}^2 \leq C + 3 \varepsilon_n \|u_n\|_{X_{0,A}}.
\]
Consequently, $(u_n)$ is a bounded sequence in $X_{0,A}$ in this case.

\medskip

\noindent\textbf{Case 2:} $\lambda_k < \lambda < \lambda_{k+1}$

\medskip

\noindent
Given $u \in X_{0,A}$, we write $u = u^{k} + u^{\perp}$, where $u^{k} \in H_{k}$ and $u^{\perp} \in H_{k}^{\perp}$. Notice that
\begin{equation}\label{EJ210}
\Re\langle u, u^{k}\rangle_{X_{0,A}} - \lambda \Re\langle u, u^{k}\rangle_{L^2} = \|u^{k}\|_{X_{0,A}}^2 - \lambda \|u^{k}\|_2^2
\end{equation}
and
\begin{equation}\label{EJ211}
\Re\langle u, u^{\perp}\rangle_{X_{0,A}} - \lambda \Re\langle u, u^{\perp}\rangle_{L^2} = \|u^{\perp}\|_{X_{0,A}}^2 - \lambda \|u^{\perp}\|_2^2.
\end{equation}
By \eqref{EJ24}, \eqref{EJ210} and the variational characterization of $\lambda_{k}$, we obtain
\[
\begin{aligned}
 -\varepsilon_n \|u_n^{k}\|_{X_{0,A}} & \leq \Re\langle u_n, u_n^{k}\rangle_{X_{0,A}} - \lambda \Re\langle u_n, u_n^{k}\rangle_{L^2} -
 \Re\int_{\Omega} f(|u_n|) u_n \overline{u_n^{k}} \ud x \\
& \leq \left(\frac{\lambda_{k}- \lambda}{\lambda_{k}} \right)\|u_n^{k}\|_{X_{0,A}}^2 - \Re\int_{\Omega} f(|u_n|) u_n \overline{u_n^{k}}\, \ud x.
\end{aligned}
\]
Therefore, we can find $C>0$ such that
\begin{equation}\label{EJ212}
\left(\frac{\lambda- \lambda_k}{\lambda_{k}} \right)\|u_n^{k}\|_{X_{0,A}}^2 \leq \varepsilon_n \|u_n^{k}\|_{X_{0,A}} + \|u_n^{k}\|_{\infty}  \int_{\Omega} f(|u_n|) |u_n| \,\ud x.
\end{equation}
By applying \eqref{EJ26}, there exist $C_1, C_2>0$ such that
\begin{equation}\label{EJ27}
\int_{\Omega} f(|u_n|) |u_n|  \,\ud x \leq C_1 + C_2\varepsilon_n \|u_n\|_{X_{0,A}}.
\end{equation}
Since $H_{k}$ is a finite dimensional subspace, we can find $C>0$ such that
$\|u_n^{k}\|_{\infty} \leq C \|u_n^{k}\|_{X_{0,A}}$. Thus,
from \eqref{EJ212} and \eqref{EJ27}, we get
\begin{equation}\label{EJ213}
\|u_n^{k}\|^2_{X_{0,A}} \leq C (\|u_n^{k}\|_{X_{0,A}} +  \varepsilon_n \|u_n^{k}\|_{X_{0,A}} + \varepsilon_n \|u_n^{k}\| _{X_{0,A}}\|u_n\|_{X_{0,A}}).
\end{equation}
Again, by \eqref{EJ24}, \eqref{EJ211} and the variational characterization of $\lambda_{k+1}$, it follows that
\[
\begin{aligned}
 \varepsilon_n \|u_n^{\perp}\|_{X_{0,A}} & \geq \Re\langle u_n, u_n^{\perp}\rangle_{X_{0,A}} - \lambda \Re\langle u_n, u_n^{\perp}\rangle_{L^2} -
 \Re\int_{\Omega} f(|u_n|) u_n \overline{u_n^{\perp}} \,\ud x \\
& \geq \left(\frac{\lambda_{k+1}- \lambda}{\lambda_{k+1}} \right)\|u_n^{\perp}\|^2_{X_{0,A}} - \Re\int_{\Omega} f(|u_n|) u_n \overline{u_n^{\perp}}\, \ud x\\
& \geq \left(\frac{\lambda_{k+1}- \lambda}{\lambda_{k+1}} \right)\|u_n^{\perp}\|^2_{X_{0,A}} - \int_{\Omega} f(|u_n|) |u_n|^2\, \ud x - \|u_n^{k}\|_{\infty}  \int_{\Omega} f(|u_n|) |u_n| \,\ud x.
\end{aligned}
\]
This together with \eqref{EJ26} and \eqref{EJ27}, implies that there exists $C > 0$ such that
\begin{equation}\label{EJ214}
\|u_n^{\perp}\|^2_{X_{0,A}} \leq C (\varepsilon_n \|u_n^{\perp}\|_{X_{0,A}}+ C + \varepsilon_n \|u_n\| _{X_{0,A}}+  \|u_n^{k}\| _{X_{0,A}}+ \varepsilon_n \|u_n^{k}\|_{X_{0,A}} \|u_n\|_{X_{0,A}}).
\end{equation}
Combining \eqref{EJ213} and \eqref{EJ214}, we obtain that
\begin{equation*}\label{EJ215}
\|u_n\|^2_{X_{0,A}}\leq C (1+ \|u_n\| _{X_{0,A}}+  \varepsilon_n \|u_n\|^2_{X_{0,A}}),
\end{equation*}
and consequently the sequence $\{u_n\}$ is bounded. Thus, we finished the proof.
\end{proof}

\begin{lem}\label{PS}
Assume that $(H_1)-(H_2)$ are satisfied.
Then $\mathcal{I}_{A,\lambda}$ satisfies the $(PS)_c$ condition for $c< \frac{\pi}{2 \beta_0}$.
\end{lem}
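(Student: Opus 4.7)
The strategy is the one standard for Trudinger--Moser type functionals: exploit the bound $c<\pi/(2\beta_0)$ to control the concentration of $\{u_n\}$ via Lemma \ref{tipolions}. By Lemma \ref{PS11}, $\{u_n\}$ is bounded in $X_{0,A}$, so up to a subsequence $u_n\rightharpoonup u$ weakly in $X_{0,A}$, strongly in $L^r(\Omega)$ for every $r\in[2,\infty)$, and pointwise a.e. Taking $v=u_n$ in the Palais--Smale relation and using $F(t)\leq\tfrac14 f(t)t^2$ for $t\geq t_1$ (from $(H_1)$, as in the proof of Lemma \ref{PS11}) yields the uniform bounds $\int_\Omega f(|u_n|)|u_n|^2\,\ud x\leq C$ and $\int_\Omega F(|u_n|)\,\ud x\leq C$. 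Pointwise convergence combined with a classical convergence lemma of de Figueiredo--Miyagaki--Ruf type, based on the $L^1$-bound on $f(|u_n|)|u_n|$ and the critical growth of $f$, then gives $\int_\Omega f(|u_n|)u_n\overline\varphi\,\ud x\to\int_\Omega f(|u|)u\overline\varphi\,\ud x$ for every fixed $\varphi\in X_{0,A}$ and $\int_\Omega F(|u_n|)\,\ud x\to\int_\Omega F(|u|)\,\ud x$; in particular $\mathcal I_{A,\lambda}'(u)=0$, so $u$ is a weak solution.

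Set $\sigma:=\lim\|u_n\|_{X_{0,A}}^2\geq\|u\|_{X_{0,A}}^2$; since $X_{0,A}$ is Hilbert, it suffices to show $\sigma=\|u\|_{X_{0,A}}^2$. Combining $\mathcal I_{A,\lambda}(u_n)\to c$ with the strong $L^2$-convergence and the $L^1$-convergence of $F(|u_n|)$ yields
\[
\sigma-\|u\|_{X_{0,A}}^2=2\bigl(c-\mathcal I_{A,\lambda}(u)\bigr).
\]
Using $\langle\mathcal I_{A,\lambda}'(u),u\rangle=0$ one rewrites $\mathcal I_{A,\lambda}(u)=\tfrac12\int_\Omega f(|u|)|u|^2\,\ud x-\int_\Omega F(|u|)\,\ud x$; splitting $\Omega$ into $\{|u|\geq t_1\}$ (where $(H_1)$ gives $F\leq\tfrac14 f(|u|)|u|^2$) and $\{|u|<t_1\}$ (where $f,F$ are bounded by continuity and $(H_2)$), one verifies $\mathcal I_{A,\lambda}(u)\geq 0$. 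The hypothesis $c<\pi/(2\beta_0)$ therefore gives $\sigma-\|u\|_{X_{0,A}}^2\leq 2c<\pi/\beta_0$.

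Assume for contradiction $\sigma>\|u\|_{X_{0,A}}^2$. If $u\not\equiv 0$, set $v_n:=u_n/\|u_n\|_{X_{0,A}}$, so $\|v_n\|_{X_{0,A}}=1$ and $v_n\rightharpoonup v:=u/\sqrt\sigma$ with $0<\|v\|_{X_{0,A}}^2=\|u\|_{X_{0,A}}^2/\sigma<1$; Lemma \ref{tipolions} then yields $\sup_n\int_\Omega e^{s|u_n|^2}\,\ud x<\infty$ for every $s<\pi/(\sigma-\|u\|_{X_{0,A}}^2)$. If $u\equiv 0$ then $\sigma=2c<\pi/\beta_0$, and applying Lemma \ref{lem} directly gives $\sup_n\int_\Omega e^{(\beta_0+\varepsilon)|u_n|^2}\,\ud x<\infty$ for $\varepsilon$ small enough. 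In either case, pick $q>1$ with $q\beta_0<\pi/(\sigma-\|u\|_{X_{0,A}}^2)$; H\"older's inequality combined with the critical growth bound $f(t)t^2\leq Ce^{(\beta_0+\varepsilon)t^2}$ yields equi-integrability of $f(|u_n|)|u_n|^2$, so by Vitali $\int_\Omega f(|u_n|)|u_n|^2\,\ud x\to\int_\Omega f(|u|)|u|^2\,\ud x$. Then $\langle\mathcal I_{A,\lambda}'(u_n),u_n\rangle=o(1)$ together with the identity $\|u\|_{X_{0,A}}^2-\lambda\|u\|_2^2=\int_\Omega f(|u|)|u|^2\,\ud x$ for $u$ forces $\|u_n\|_{X_{0,A}}^2\to\|u\|_{X_{0,A}}^2$, the desired contradiction. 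The main obstacle I anticipate is verifying $\mathcal I_{A,\lambda}(u)\geq 0$ from $(H_1)$--$(H_2)$ alone (these are weaker than Ambrosetti--Rabinowitz), since this is precisely what anchors the quantitative threshold $c<\pi/(2\beta_0)$.
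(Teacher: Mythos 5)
Your proposal is correct and follows essentially the same route as the paper: boundedness via Lemma \ref{PS11}, the de Figueiredo--Miyagaki--Ruf convergence lemma for $\int_\Omega f(|u_n|)|u_n|\,\ud x$ and $\int_\Omega F(|u_n|)\,\ud x$, the identity $\lim\|u_n\|_{X_{0,A}}^2-\|u\|_{X_{0,A}}^2=2\bigl(c-\mathcal I_{A,\lambda}(u)\bigr)$, nonnegativity of $\mathcal I_{A,\lambda}(u)$, and Lemma \ref{tipolions} to recover uniform exponential integrability below the threshold (the paper organizes this as three cases $c=0$, $c\neq0$ with $u=0$, $c\neq0$ with $u\neq0$, while you fold them into one contradiction argument, which is only a cosmetic difference). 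Regarding the step you flag: the paper obtains $\mathcal I_{A,\lambda}(u)\geq 0$ by reading $(H_2)$ as giving $\int_\Omega f(|u|)|u|^2\,\ud x\geq 2\int_\Omega F(|u|)\,\ud x$ together with $\mathcal I_{A,\lambda}'(u)u=0$, so your concern is legitimate but is shared by, and resolved no more carefully in, the paper itself.
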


\begin{proof}
Let $(u_n)$ be satisfying \eqref{EJ23} and \eqref{EJ24}. By Lemma \ref{PS11}, we obtain a subsequence denoted again by $(u_n)$ such that, for some $u \in X_{0,A}$,
we have $u_n \rightharpoonup u$ in $X_{0,A}$, $u_n \rightarrow u$ in $L^q(\Omega)$ for all $q \in [1, +\infty)$ and $u_n(x) \rightarrow u(x)$
a.e in $\Omega$. It follows from \eqref{EJ26} and \cite[Lemma 2.1]{FMR}, that $\int_{\Omega} f(|u_n|)|u_n|\ud x \rightarrow \int_{\Omega} f(|u|)|u|\ud x$, as $n \to +\infty$.
Thus, by applying $(H_1)$ and the Generalized Lebesgue Dominated Convergence Theorem we have
\[
\int_{\Omega} F(u_n)\ud x \rightarrow \int_{\Omega} F(u)\ud x,\,\, \mbox{as}\,\, n \to +\infty.
\]
This convergence together with \eqref{EJ23} imply
\begin{equation}\label{EJ221}
\lim_{n \to +\infty}\|u_n\|^2_{X_{0,A}} = 2c + \lambda\|u\|_2^2 +2 \int_{\Omega} F(|u|) \ud x.
\end{equation}
Consequently, from \eqref{EJ24} it follows
\begin{equation}\label{EJ222}
\lim_{n \to +\infty} \int_{\Omega} f(|u_n|)|u_n|^2 \ud x= 2c  +2 \int_{\Omega} F(|u|) \ud x.
\end{equation}
From $(H_2)$ and \eqref{EJ222} we reach $c \geq 0$. It follows by $(H_2)$ and \eqref{EJ24}, that
\[
\|u\|^2_{X_{0,A}} - \lambda \|u\|_2^2 = \int_{\Omega} f(|u|)|u|^2 \ud x \geq 2 \int_{\Omega} F(|u|) \ud x,
\]
and consequently, we get that $I_\lambda(u) \geq 0$.

\medskip

Now we will prove that $u_n \rightarrow u$ in $X_{0,A}$.

\medskip

In order to achieve this goal we will consider three cases.

\medskip

\noindent\textbf{Case 1:} $c=0$

\medskip

\noindent In this case, using \eqref{EJ221} we have
\[
0 \leq\mathcal{I}_{A,\lambda}(u) \leq \liminf_{n \to +\infty} \mathcal{I}_{A,\lambda}(u_n) = \liminf_{n \to +\infty} \frac{1}{2}\|u_n\|^2_{X_{0,A}} - \left(\frac{\lambda}{2}\|u\|_2^2 + \int_{\Omega} F(|u|) \ud x  \right)=0.
\]
Consequently, $u_n \rightarrow u$ in $X_{0,A}$, as $n \to +\infty$, as we wanted to demonstrate.
\medskip

\noindent\textbf{Case 2:} $c\not=0$ and $u=0$

\medskip

\noindent We will show that this case cannot happen for a $(PS)_c$ sequence.
Indeed, since $u=0$, it follows from \eqref{EJ221} that, given $\varepsilon >0$, for $n$ large enough, we have
\begin{equation}\label{EJ224}
\|u_n\|^2_{X_{0,A}}\leq 2 c + \varepsilon.
\end{equation}
Now we notice that, using that $f$ has critical growth, it holds
\begin{equation}\label{EJ225x}
\int_{\Omega} (f(|u_n|)|u_n|)^q \ud x \leq C \int_{\Omega} e^{q \beta \|u_n\|^2_{X_{0,A}} \left(\frac{|u_n|}{\|u_n\|_{X_{0,A}}}\right)^2} \ud x.
\end{equation}
Since $c < \frac{\pi}{2 \beta_0}$, by using \eqref{EJ224}, we can choose $q>1$ sufficiently close to $1$, $\beta > \beta_0$ sufficiently close to $\beta_0$,
and $\varepsilon$ sufficiently small such
that $q \beta \|u_n\|^2 _{X_{0,A}}< \pi$, for $n$ large enough. Thus, by the Trudinger-Moser inequality and \eqref{EJ225x} we have
\begin{equation}\label{ETC}
\int_{\Omega} (f(|u_n|)|u_n|)^q \ud x \leq C.
\end{equation}
From this estimate and the H\"{o}lder's inequality, up to a subsequence, we get
\[
\int_{\Omega} f(|u_n|)|u_n|^2 \ud x \rightarrow 0,\,\, \mbox{as}\,\, n \to +\infty.
\]
By using \eqref{EJ24}, we obtain $\|u_n\|_{X_{0,A}} \rightarrow 0$, as $n \to +\infty$. This contradicts \eqref{EJ221}.

\medskip

\noindent\textbf{Case 3:} $c\not=0$ and $u\not=0$

\medskip

\noindent Consider $v_n = \dfrac{u_n}{\|u_n\|_{X_{0,A}}}$ and $v= \dfrac{u}{\lim\|u_n\|_{X_{0,A}}}$.

\noindent It is clear that
$v_n \rightharpoonup v$ weakly in $X_{0,A}$. If $\|v\|_{X_{0,A}} =1$ we conclude the proof. Then, we assume that $\|v\| _{X_{0,A}}<1$.

\medskip

\noindent \textbf{Claim:} There exist $q>1$ sufficiently close to $1$ and $\beta > \beta_0$ sufficiently close to $\beta_0$,
such that
\[
q \beta \|u_n \|^2_{X_{0,A}}  < \frac{\pi}{1-\|v\|_{X_{0,A}}^2},
\]
for $n$ large enough. As a consequence this claim and Lemma \ref{tipolions} we have that \eqref{ETC} holds, and we can
see as in the \textbf{Case 2} that $u_n \rightarrow u$ strongly in $X_{0,A}$. So to complete the proof is enough to prove this statement.

\medskip

Notice that, up to a subsequence,
\begin{equation*}\label{EJ221x}
\lim_{n \to +\infty}\frac{1}{2}\|u_n\|^2 _{X_{0,A}}= c + \frac{\lambda}{2}\|u\|_2^2 + \int_{\Omega} F(|u|) \ud x.
\end{equation*}
Denote by
\[
B := \left(c + \frac{\lambda}{2}\|u\|_2^2 + \int_{\Omega} F(|u|) \ud x\right) (1-\|v\|^2_{X_{0,A}}).
\]
Then
\[B = c - \mathcal{I}_{A,\lambda}(u)\]
and consequently,
\[
\lim_{n \to +\infty}\frac{1}{2}\|u_n\|^2_{X_{0,A}}  = \frac{B}{1-\|v\|^2_{X_{0,A}}} =\frac{c- \mathcal{I}_{A,\lambda}(u)}{1-\|v\|^2_{X_{0,A}}} < \frac{\pi}{2 \beta_0(1-\|v\|^2_{X_{0,A}})}.
\]
This implies the claim.
\end{proof}

\section{Mountain pass case when $0<\lambda<\lambda_1$}

In this case we will use the Mountain Pass Theorem due to Ambrosetti and Rabinowitz \cite{AR1973}.

\begin{thmlet}
Let $J: H\to \mathbb R$ be a $C^1$ functional on a Banach space $(H, \; \|\cdot \|)$ satisfying
\begin{itemize}
\item[(i)] there exists some $\beta>0$ such that $J$ satisfies the Palais-Smale condition, $(PS)_c$ in short, for all $c\in (0, \beta)$,
\item[(ii)] there exist constants $\rho, \delta>0$ such that $J(u)\geq \delta$ for all $u\in H$ satisfying $\|u\|=\rho$.
\item[(iii)] $J(0)<\delta$ and $J(v)<\delta$ for some $v\in H\setminus \{0\}$ with $\|v\|\neq 0$.
\end{itemize}
Consider $\Gamma:=\{\eta\in C([0, 1],H): \eta(0)=0 \;\text{ and }\; \eta(1)=v\}$ and set
$
c_M=\inf_{\eta\in \Gamma}\max_{t\in [0, 1]} J(\eta(t))\geq \delta.
$
Then $c_M\in (0, \beta)$ and it is a critical point of the functional $J$.
\end{thmlet}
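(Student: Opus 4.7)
This is the classical Mountain Pass Theorem of Ambrosetti and Rabinowitz, so the plan is to follow their standard minimax argument via the quantitative deformation lemma. The proof splits naturally into three steps: verifying that $c_M$ is well-defined and finite, showing $c_M \geq \delta$ (so that $c_M$ lies in the range where the Palais--Smale condition is available), and using a deformation argument to force $c_M$ to be a critical value.

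First I would show that $c_M$ is a finite real number bounded below by $\delta$. The upper bound comes from inserting the straight-line path $\eta(t)=tv$ into the definition of $c_M$, which gives $c_M \leq \max_{t\in[0,1]} J(tv) < \infty$ by continuity of $J$. For the lower bound, the intended geometric reading of conditions (ii) and (iii) places $v$ outside the sphere $\|u\|=\rho$; granted this, for every $\eta\in\Gamma$ the continuous map $t\mapsto\|\eta(t)\|$ takes the value $\rho$ at some $t_\rho\in(0,1)$ by the intermediate value theorem, and (ii) then yields $J(\eta(t_\rho))\geq\delta$. Taking the maximum over $t$ and the infimum over $\Gamma$ gives $c_M\geq\delta>0$.

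The main step is to show that $c_M$ is a critical value of $J$, for which I would argue by contradiction using the quantitative deformation lemma. Suppose $c_M$ were a regular value. Combining condition (i), which yields $(PS)_{c_M}$ since $c_M\in(0,\beta)$, with the standard pseudo-gradient construction, one obtains $\varepsilon>0$ small enough that $c_M+\varepsilon<\beta$ and $c_M-\varepsilon>\max\{J(0),J(v)\}$, together with a continuous deformation $\Psi:H\to H$ which is the identity outside $J^{-1}([c_M-2\varepsilon,c_M+2\varepsilon])$ and maps the sublevel set $\{J\leq c_M+\varepsilon\}$ into $\{J\leq c_M-\varepsilon\}$. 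Choosing $\eta\in\Gamma$ with $\max_t J(\eta(t))<c_M+\varepsilon$, the composition $\Psi\circ\eta$ still belongs to $\Gamma$ (since the endpoints are fixed by $\Psi$) and satisfies $\max_t J(\Psi(\eta(t)))\leq c_M-\varepsilon$, contradicting the definition of $c_M$ as an infimum.

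The principal obstacle is the construction of the deformation $\Psi$: this requires a locally Lipschitz pseudo-gradient vector field for $J$ on the regular set together with the use of $(PS)_{c_M}$ to produce a uniform lower bound on $\|J'(u)\|$ in a neighbourhood of $\{J=c_M\}$, thereby ensuring that the associated negative pseudo-gradient flow is defined for the required time interval and actually decreases $J$ by a definite amount. Since the paper cites Ambrosetti and Rabinowitz directly for this theorem, in practice the writeup would simply quote their result rather than reconstruct the deformation argument in full.
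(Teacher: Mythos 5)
The paper offers no proof of this statement: it is quoted as the classical Mountain Pass Theorem and attributed to Ambrosetti--Rabinowitz \cite{AR1973}, so there is no ``paper proof'' to compare against. Your sketch is the standard and correct minimax-plus-deformation argument, and the one caveat you flag is the right one: as literally written, hypothesis (iii) only requires $\|v\|\neq 0$, whereas the intermediate-value step that produces $c_M\geq\delta$ needs $\|v\|>\rho$; this is an omission in the paper's statement, and you correctly read it as the intended geometry (in the paper's application, Proposition 4.1 produces $v=R_nz_n$ with $\mathcal I_{A,\lambda}(R_nz_n)\leq 0$ and $R_n$ large, so the condition holds).

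One point you pass over silently deserves a remark: the conclusion $c_M\in(0,\beta)$ does not follow from hypotheses (i)--(iii) alone, and your invocation of $(PS)_{c_M}$ ``since $c_M\in(0,\beta)$'' therefore assumes part of what the theorem claims to conclude. The inequality $c_M<\beta$ is genuinely extra information: in the paper it is established separately by the Moser-sequence estimate (Proposition 4.3, giving $c(n)<\pi/(2\beta_0)$) before the theorem is applied. So either the theorem should be read with the additional hypothesis $c_M<\beta$ (equivalently, the existence of some path along which $\max_t J<\beta$), or your deformation step must be restricted to the case where such a path exists. This is a defect of the statement rather than of your argument, but a complete writeup should make the dependence explicit rather than fold it into the conclusion.
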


In the following propositions we will show the above geometry.

\begin{pro}\label{MPBelow1}
Assume that $f$ satisfies $(H_1)$. Then there exist $\tilde{M}>0$ and $u\in X_{0, A}$ such that $\mathcal I_{A, \lambda}(u)<-\tilde{M}$ for all $\lambda>0$.
\end{pro}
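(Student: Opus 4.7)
The plan is to exploit the fact that assumption $(H_1)$, namely $F(t)\leq M f(t)t$ for $t\geq t_0$, secretly encodes an exponential lower bound on $F$ at infinity. Rewriting this inequality as
$$\frac{(F(t))'}{F(t)} \;=\; \frac{f(t)t}{F(t)} \;\geq\; \frac{1}{M},\qquad t\geq t_0,$$
and integrating from $t_0$ to $t$ gives
$$F(t) \;\geq\; F(t_0)\,e^{(t-t_0)/M}, \qquad t\geq t_0.$$
Here $F(t_0)>0$ is guaranteed by $(H_2)$. This exponential growth of $F$ at infinity will easily dominate the quadratic term $\tfrac{1}{2}\|u\|^2_{X_{0,A}}$ in $\mathcal{I}_{A,\lambda}$.

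Next I would fix any nontrivial real-valued $u_0 \in C_c^\infty(\Omega)\subset X_{0,A}$. Since $u_0\not\equiv 0$, there exist $\delta>0$ and a measurable set $E\subset\Omega$ with $|E|>0$ such that $|u_0(x)|\geq \delta$ for all $x\in E$. Then for any scalar $t>0$ with $t\delta \geq t_0$, we have $|tu_0(x)|\geq t\delta \geq t_0$ on $E$, so
$$\int_\Omega F(|tu_0|)\,\ud x \;\geq\; \int_E F(t\delta)\,\ud x \;\geq\; |E|\,F(t_0)\,e^{(t\delta - t_0)/M}.$$

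Finally, since $-\tfrac{\lambda}{2}\|tu_0\|_2^2 \leq 0$ for every $\lambda>0$, I would estimate
$$\mathcal{I}_{A,\lambda}(tu_0) \;\leq\; \frac{t^2}{2}\|u_0\|^2_{X_{0,A}} - |E|\,F(t_0)\,e^{(t\delta - t_0)/M}.$$
The right-hand side is independent of $\lambda$ and tends to $-\infty$ as $t\to+\infty$ because an exponential beats a quadratic. Hence, for $t$ sufficiently large, say $t=t^*$, we can set $u:=t^*u_0$ and choose any $\tilde M>0$ with $\tilde M < -\mathcal{I}_{A,\lambda_{0}}(t^*u_0)$ to conclude $\mathcal{I}_{A,\lambda}(u)<-\tilde M$ uniformly in $\lambda>0$.

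The only subtle point is the derivation of the exponential lower bound on $F$ from $(H_1)$; once that is in hand, the rest of the argument is a direct comparison between exponential and polynomial growth along the ray $\{tu_0:t>0\}$. No deeper geometry of $X_{0,A}$ (magnetic structure, fractional seminorm, spectral data) enters the proof, so the main obstacle is really just cleanly writing down the integration step that converts $(H_1)$ into the pointwise estimate $F(t)\geq F(t_0)e^{(t-t_0)/M}$.
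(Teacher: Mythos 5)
Your proof is correct and follows essentially the same route as the paper: both arguments convert $(H_1)$, i.e.\ $F\leq M F'$, into a superquadratic lower bound on $F$ at infinity and then let $\mathcal I_{A,\lambda}(tu_0)\to-\infty$ along a ray, uniformly in $\lambda>0$ since the $-\frac{\lambda}{2}\|\cdot\|_2^2$ term only helps. The only (harmless) differences are that you keep the sharp exponential bound $F(t)\geq F(t_0)e^{(t-t_0)/M}$ and integrate over a level set $E$ of $|u_0|$, whereas the paper weakens this to $F(t)\geq C_1 t^{\mu}-C_2$ with $\mu>2$ and invokes finite-dimensional norm equivalence on $H_k$; note also that both arguments tacitly need $F(t_0)>0$, which you correctly trace to $(H_2)$ even though the proposition's statement lists only $(H_1)$.
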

\begin{proof}
Let us fix some $u_0\in H_k$ with $u_0 \not=0$. Let us introduce the scalar map $\psi: \mathbb R\to \mathbb R$ defined as $\psi(t)=\mathcal I_{A, \lambda}(tu_0)$. Now from assumption $(H_1)$, there are $\mu>2$ and constants $C_1, C_2>0$ such that
\begin{equation}\label{implicationH1}
F(|u|)\geq C_1|u|^\mu-C_2 .
\end{equation}
Using \eqref{Jlam}, \eqref{implicationH1}  and equivalence of $ X_{0, A}$ and $L^\mu$ norms, we get
\[
\psi(t)\leq \frac{t^2}{2}\|u_0\|_{X_{0,A}}^2-C_1 t^\mu \|u_0\|_{X_{0,A}}^\mu +2C_2
\]
 which implies that $\psi(t)\to -\infty$ as $t\to \infty$. Hence the result follows.
\end{proof}

\begin{pro} \label{MPBelow}
Assume that $f$ satisfies $(H_3)$. Then there exist $\delta, \rho>0$ such that
$\mathcal I_{A,\lambda}(u)\geq \delta$ for $u\in X_{0, A}$ satisfying $\|u\|_{X_{0,A}}=\rho.$
\end{pro}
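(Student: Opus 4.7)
The plan is to combine the sub-quadratic behavior of $F$ at the origin coming from $(H_3)$ with a suitable exponential bound near infinity from the critical growth, and then absorb the quadratic terms using the variational characterization of $\lambda_1$. Since we are in the mountain pass case with $0<\lambda<\lambda_1$, hypothesis $(H_3)$ is invoked with $k=1$, giving $\ell=\limsup_{t\to 0^+}2F(t)/t^2<\lambda_1-\lambda$.

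First I would fix $\varepsilon>0$ so small that $\ell+\varepsilon<\lambda_1-\lambda$. By the definition of $\ell$, there exists $\delta_0>0$ with $F(t)\le\tfrac{\ell+\varepsilon}{2}t^2$ for $0\le t\le\delta_0$. Next, choose any $q>2$ and $\beta>\beta_0$; by the critical growth \eqref{CG} and the fact that $F(t)\le C e^{\beta t^2}$ for large $t$, together with $t\ge \delta_0$ letting us absorb lower-order factors, one obtains a global bound of the form
\begin{equation*}
F(t)\le \frac{\ell+\varepsilon}{2}\,t^2+C_1\,t^q e^{\beta t^2},\qquad t\ge 0.
\end{equation*}
Integrating over $\Omega$ and applying the lemma proved just after Lemma~\ref{lem} (with $\alpha=\beta$ and $M=\rho$ small enough that $\beta\rho^2<\pi$), we get, whenever $\|u\|_{X_{0,A}}=\rho$,
\begin{equation*}
\int_\Omega F(|u|)\,\ud x\le \frac{\ell+\varepsilon}{2}\|u\|_2^2+C_2\|u\|_{X_{0,A}}^q.
\end{equation*}

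Combining this with the definition \eqref{Jlam} of $\mathcal I_{A,\lambda}$ and the bound $\|u\|_2^2\le\lambda_1^{-1}\|u\|_{X_{0,A}}^2$ coming from the first eigenvalue characterization yields
\begin{equation*}
\mathcal I_{A,\lambda}(u)\ge \frac{1}{2}\left(1-\frac{\lambda+\ell+\varepsilon}{\lambda_1}\right)\|u\|_{X_{0,A}}^2-C_2\|u\|_{X_{0,A}}^q.
\end{equation*}
Since $\lambda+\ell+\varepsilon<\lambda_1$ by the choice of $\varepsilon$, the coefficient of $\|u\|_{X_{0,A}}^2$ is strictly positive, and because $q>2$ the higher-order term is negligible for small $\rho$. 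Thus fixing $\rho>0$ small enough (in particular $\beta\rho^2<\pi$), we conclude $\mathcal I_{A,\lambda}(u)\ge\delta$ for all $u$ with $\|u\|_{X_{0,A}}=\rho$, for some $\delta>0$.

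The only delicate point is the compatibility of constants: we must simultaneously pick $\beta>\beta_0$ close to $\beta_0$, $q>2$, and the radius $\rho$, so that $\beta\rho^2<\pi$ is preserved and the previous lemma's exponential integrability applies; this is achievable since $\pi>0$ is fixed and we have independent freedom in $\rho$. The sub-quadratic behavior of $F$ at $0$ is what lets us beat the bad sign of the $-\tfrac{\lambda}{2}\|u\|_2^2$ term via the spectral gap $\lambda_1-\lambda$, so this step rather than the exponential bound is where $(H_3)$ is truly used.
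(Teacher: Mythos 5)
Your proof is correct and follows essentially the same route as the paper: split $F$ into a sub-quadratic part near the origin via $(H_3)$ and an exponential tail controlled by the unnumbered lemma following Lemma~\ref{lem}, absorb the quadratic terms with the variational characterization of $\lambda_1$ using the spectral gap $\ell+\varepsilon<\lambda_1-\lambda$, and then take $\rho$ small (the paper fixes $q=3$ where you keep a general $q>2$, an immaterial difference).
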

\begin{proof}
From assumption $(H_3)$, given $\epsilon>0$, there exists $\delta=\delta(\epsilon)>0$ such that
\[
\frac{2F(t)}{t^2}\leq \ell+\epsilon\;\;\text{for all}\;\; |t|\leq \delta.
\]
By the exponential critical growth assumption on the nonlinearity $f$, there exist $C=C(\epsilon)>0$ and $\beta>\beta_0$ such that
\[
F(|u|)\leq \frac{\ell+\epsilon}{2}|u|^2+C |u|^3 e^{\beta|u|^2},
\]
which implies
\[
\mathcal I_{A, \lambda} (u)\geq \frac{1}{2}\|u\|_{X_{0,A}}^2-\frac{\lambda}{2}\|u\|_2^2-\frac{\ell+\epsilon}{2}\|u\|_2^2-C\int_\Omega |u|^2e^{\beta_0|u|^2} \, \ud x.
\]
Therefore, for  $\|u\|_{X_{0,A}}=\rho>0$ sufficiently small such that $\beta \rho^2 < \pi$, by Trudinger-Moser inequality \eqref{Corolario1} and H\"{o}lder's inequality, we reach
\[
\mathcal I_{A, \lambda} (u)\geq \frac{1}{2}\|u\|_{X_{0,A}}^2-\frac{\lambda}{2}\|u\|_2^2-\frac{\ell+\epsilon}{2}\|u\|_2^2-C \|u\|^3_{X_{0,A}}=\frac{1}{2}\|u\|^2_{X_{0,A}}-\frac{\lambda+\ell+\epsilon}{2}\|u\|_2^2-C \|u\|^3_{X_{0,A}}
\]
possibly for different constant $C>0$. Now using the characterization of $\lambda_1$, we get
\[
\mathcal I_{A, \lambda} (u)\geq \frac{1}{2}\left(1-\frac{\lambda+\ell+\epsilon}{\lambda_1}\right)\|u\|^2_{X_{0,A}}-C \|u\|^3_{X_{0,A}}.
\]
Observe that for a given $\epsilon>0$,  sufficiently small, $\lambda+\ell+\epsilon<\lambda_1$. Hence
\[
\mathcal I_{A, \lambda} (u)\geq C_1\|u\|^2_{X_{0,A}}-C \|u\|^3_{X_{0,A}},
\]
where $C_1=\frac{1}{2}\left(1-\frac{\lambda+\ell+\epsilon}{\lambda_1}\right)>0$. Next we denote $g(t)=C_1\rho^2-C\rho^3$ and observe that $g(\rho)\to 0$ as $\rho\to 0$. Hence for sufficiently small $\rho>0$ there exists $\delta>0$ such that  $g(\rho)\geq \delta>0$. This completes the proof of the result.
\end{proof}

\subsection{The minimax level}

To show some estimates on the minimax level we require some facts on the Moser's functions defined by Moser \cite{moser} (see also \cite{Takahasi} for the fractional case). The Moser's functions are defined as follows
$$
M_n(x)=\left\{%
\begin{array}{ll}
\displaystyle    \sqrt{\log n}, & |x|< \frac{1}{n}, \\
\displaystyle    \frac{\log(1/|x|)}{\sqrt{\log n}}, & \frac{1}n\leq |x|< {1}, \\
0,& |x|\geq {1}. \\
\end{array}%
\right.
$$
The following proposition deals with the  asymptotic estimates on Moser's sequence.
\begin{lem} The following estimates are satisfied by $M_n$
	\begin{description}
		\item[(a)]
		$\|M_n\|_2^2=\frac{4}{\log n} + o_n(1)$;
		\item[(b)] $\|M_n\|^2_{X_{0,A}}\leq \pi+ O\left(\frac{1}{\log n}\right)$.
	\end{description}
\end{lem}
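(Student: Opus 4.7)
The plan is to carry out direct computations exploiting the explicit piecewise structure of $M_n$ and the boundedness of the magnetic field $A$.

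For part (a), I would split the $L^2$-integral at $|x|=1/n$:
\begin{equation*}
\|M_n\|_2^2 = 2\int_0^{1/n} \log n\, dx + \frac{2}{\log n}\int_{1/n}^1 \bigl(\log(1/x)\bigr)^2\, dx.
\end{equation*}
The first term is $2\log n/n = o_n(1)$. For the second, the substitution $u=-\log x$ converts the integrand to $u^2 e^{-u}\, du$ on $[0,\log n]$; by monotone convergence this tends to $\int_0^\infty u^2 e^{-u}\, du = 2$. Multiplying by $2/\log n$ produces the leading term $4/\log n$, so that $\|M_n\|_2^2 = 4/\log n + o_n(1)$.

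For part (b), the key observation is that $M_n$ is real-valued, hence for $\theta = (x-y) A(\frac{x+y}{2})$ one has the algebraic identity
\begin{equation*}
\bigl|M_n(x) - e^{i\theta} M_n(y)\bigr|^2 = \bigl(M_n(x) - M_n(y)\bigr)^2 + 4 M_n(x) M_n(y) \sin^2(\theta/2).
\end{equation*}
Dividing by $|x-y|^2$ and integrating decomposes $\|M_n\|_{X_{0,A}}^2$ into the non-magnetic fractional seminorm $\|M_n\|_{1/2,2}^2$ plus a magnetic correction. The classical fractional Moser sequence estimate (Takahashi \cite{Takahasi}, see also Iula--Maalaoui--Martinazzi) gives $\|M_n\|_{1/2,2}^2 \leq \pi + O(1/\log n)$; this is exactly the calculation that establishes the sharp constant $\pi$ in Lemma \ref{Trudinger-Moser1}. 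For the magnetic remainder, I would use the elementary bound $\sin^2(\theta/2)\leq \theta^2/4 \leq |x-y|^2 \|A\|_\infty^2/4$, which kills the singular kernel exactly, yielding
\begin{equation*}
\frac{2}{\pi}\iint_{[-1,1]^2} \frac{M_n(x) M_n(y) \sin^2(\theta/2)}{|x-y|^2}\, dx\, dy \leq \frac{\|A\|_\infty^2}{2\pi}\left(\int_{-1}^1 M_n(x)\, dx\right)^2.
\end{equation*}
A computation entirely analogous to (a), using $\int \log(1/x)\, dx = x - x\log x$ on $[1/n,1]$, gives $\int_{-1}^1 M_n\, dx = 2/\sqrt{\log n} + O(\sqrt{\log n}/n)$, so the square is $O(1/\log n)$ and the magnetic remainder contributes only $O(1/\log n)$, which yields (b).

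The main obstacle is the non-magnetic fractional Moser estimate $\|M_n\|_{1/2,2}^2 \leq \pi + O(1/\log n)$, which is the genuinely delicate step and requires a careful splitting of the $H^{1/2}$ Gagliardo integral into the ``inner'' region $\{|x|, |y| < 1/n\}$ (where $M_n$ is constant and the integral vanishes), the ``outer'' region $\{|x|, |y| > 1\}$ (which contributes the tail), and the ``mixed'' region; however, this is the standard Moser sequence computation now available in the references cited in the paper. Once that is granted, the passage to the magnetic setting is essentially free, because the smoothness of $A$ inserts the factor $|x-y|^2$ that exactly compensates the Riesz singularity, reducing the magnetic correction to a harmless multiple of $\|M_n\|_{L^1}^2$.
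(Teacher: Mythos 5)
Your proof is correct, and its core is the same as the paper's: both rest on the algebraic identity $|a-e^{i\theta}b|^2=(a-b)^2+2ab(1-\cos\theta)$ for real $a,b$ (your $4ab\sin^2(\theta/2)$ is exactly the paper's $2ab(1-\cos\xi_A)$), which splits $\|M_n\|_{X_{0,A}}^2$ into the non-magnetic Gagliardo seminorm plus a magnetic correction, and both then import the estimate $\|M_n\|_{1/2,2}^2\le \pi+O(1/\log n)$ from Takahashi rather than re-deriving it -- so you are right that this is the one genuinely delicate ingredient, and it is treated as a black box on both sides. The differences are minor but real. For (a) you compute $\|M_n\|_2^2$ directly (and in fact obtain the sharper remainder $o(1/\log n)$), whereas the paper simply cites Takahashi's Equation (2.5). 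For the magnetic correction, the paper splits the double integral into $\{|x-y|<\delta\}$ and $\{|x-y|\ge\delta\}$, using the near-diagonal limit of $(1-\cos\xi_A)/|x-y|^2$ on the first piece and the crude bound $1-\cos\le 2$ on the second, ending with a bound of the form $C_\delta\|M_n\|_2^2=O(1/\log n)$; you instead use the global inequality $1-\cos t\le t^2/2$, which cancels the Riesz kernel everywhere at once and bounds the correction by a multiple of $\|A\|_\infty^2\|M_n\|_{L^1}^2=O(1/\log n)$. Your version is cleaner: it avoids the $\varepsilon$--$\delta$ bookkeeping and the paper's slightly imprecise claim that the near-diagonal limit is a single constant $C_A$ (it is really $\tfrac12 A^2(\tfrac{x+y}{2})$, bounded by $\tfrac12\|A\|_\infty^2$), while requiring only $A\in L^\infty$, exactly as the paper's argument does. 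Both routes deliver the same $O(1/\log n)$ remainder, so the conclusions coincide.
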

\begin{proof}
	The proof of item (a)  follows from Takahasi \cite{Takahasi} (see estimate in Equation (2.5)). To prove the item (b), we will use Euler's formula $(e^{i\theta}=\cos \theta+i \sin  \theta)$ with the notation $\xi_A= \mathrm{i} (x-y)\cdot
	A(\frac{x+y}{2})$ as below
	\begin{align*}
	\|M_n\|^2_{X_{0,A}}&= \iint_{\mathbb{R}^{2}}\frac{|M_n(x)- e^{\mathrm{i} (x-y)\cdot
			A(\frac{x+y}{2})}M_n(y)|^2}{|x-y|^{2}}\, \ud x \ud y \\
	&=\iint_{\mathbb{R}^{2}}\frac{|M_n(x)- \cos \xi_A M_n(y)-i \sin \xi_A M_n(y) |^2}{|x-y|^{2}}\, \ud x \ud y \\
	&=\iint_{\mathbb{R}^{2}}\frac{|M_n(x)- \cos \xi_A M_n(y)|^2+|\sin \xi_A M_n(y) |^2}{|x-y|^{2}}\, \ud x \ud y\\
	&=\iint_{\mathbb{R}^{2}}\frac{|M_n(x)- M_n(y)|^2+2(1-\cos \xi_A) M_n(x)M_n(y)}{|x-y|^{2}}\, \ud x \ud y.
	\end{align*}
	Hence
	\begin{equation}\label{1223}
	\|M_n\|^2_{X_{0,A}} = [M_n]^2_{H^s(\mathbb R)}+2\iint_{\mathbb{R}^{2}}\frac{(1-\cos \xi_A) M_n(x)M_n(y)}{|x-y|^{2}}\, \ud x \ud y.
	\end{equation}
	Let us estimate the second integral in the above equation as follows. Denote
	\[
	I = \int_{\mathbb{R}^2} \frac{2(1-\cos(\xi_A(x,y))M_n(x) M_n(y)}{|x-y|^2} \ud x \ud y =2( I_1 + I_2),
	\]
	where
	\[
	I_1 = \int_{|x-y| < \delta} \frac{(1-\cos(\xi_A(x,y))M_n(x) M_n(y)}{|x-y|^2} \ud x \ud y
	\]
	and
	\[
	I_2 = \int_{|x-y| \geq \delta} \frac{(1-\cos(\xi_A(x,y))M_n(x) M_n(y)}{|x-y|^2} \ud x \ud y,
	\]
	with $\delta >0$ to be chosen later.
	
	In order to estimate $I_1$, notice that since $A \in L^\infty(\mathbb{R})$, we have
	\[
	\lim_{|x-y|\rightarrow 0} \left|\frac{(1-\cos(\xi_A(x,y))}{|x-y|^2} \right|=
	\lim_{|x-y|\rightarrow 0}  \left|\frac{(1-\cos(\xi_A(x,y))}{\xi^2_A(x,y)} A^2\left(\frac{x+y}{2}\right) \right|=
	C_A.
	\]
	Then, given $\varepsilon >0$, there exists $\delta=\delta(\varepsilon)>0$ such that
	\[
	\left|\frac{(1-\cos(\xi_A(x,y))}{|x-y|^2} \right| \leq C_A + \varepsilon,\,\, \mbox{for all}\,\, |x-y| < \delta.
	\]
	From this we get that
	\begin{eqnarray*}
		I_1 &\leq & (C_A + \varepsilon )\int_{|x-y| < \delta} M_n(x) M_n(y) \ud x \ud y\\
		& \leq & \frac{1}{2}(C_A + \varepsilon )\int_{|x-y| < \delta} (M^2_n(x) + M^2_n(y)) \ud x \ud y\\
		& = & (C_A + \varepsilon )\int_{|x-y| < \delta} M^2_n(x)  \ud x \ud y\\
		& = & 2 \delta (C_A + \varepsilon ) \|M_n\|_2^2.
	\end{eqnarray*}
	For the integral $I_2$, notice that
	\begin{eqnarray*}
		I_2 &\leq & 2 \int_{|x-y| \geq \delta} \frac{M_n(x)  M_n(y)}{|x-y|^2} \ud x \ud y\\
		& \leq &  \int_{|x-y| \geq \delta} \frac{M^2_n(x) + M^2_n(y)}{|x-y|^2} \ud x \ud y\\
		& = &   2\int_{|x-y| \geq \delta} \frac{M^2_n(x) }{|x-y|^2} \ud x \ud y\\
		& = & \frac{4}{\delta} \|M_n\|_2^2.
	\end{eqnarray*}
	Combining $I_1$ and $I_2$, we obtain
	\[
	I \leq \left(4 \delta (C_A + \varepsilon )+ \frac{8}{\delta} \right) \|M_n\|_2^2.
	\]
	Using this last estimate, \eqref{1223} and (a), we reach (b).
	This completes the proof of the lemma.
\end{proof}

Now, we define the minimax level of $\mathcal I_{A, \lambda}$ by
\begin{equation*}\label{EL28}
c(n) : = \inf_{\gamma \in \Gamma} \max_{t \in [0,1]} \mathcal I_{A, \lambda} (\gamma(t)),
\end{equation*}
where
$$\Gamma = \{\gamma \in C([0,1], X_{0,A})\,:\, \gamma(0)=0\,\, \mbox{and}\,\, \gamma(1) = R_n z_n\},$$
$R_n$ being such that $\mathcal I_{A, \lambda} (R_n z_n) \leq 0$ and $z_n = \dfrac{M_n}{\|M_n\|_{X_{0,A}}}$.

\begin{pro}\label{levelminimax}
	Assume that $f$ satisfies $(H_4)$.
	Then there exists $n$ large enough such that $$c(n)< \dfrac{\pi}{2 \beta_0}.$$
\end{pro}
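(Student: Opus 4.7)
My plan is to test the minimax level against the straight path $\gamma(t) = t R_n z_n$, with $R_n > 0$ chosen large enough that $\mathcal I_{A,\lambda}(R_n z_n) \leq 0$ (such an $R_n$ exists by the superlinear lower bound recorded in Proposition~4.1 applied to $u_0 = z_n$). Along this admissible path,
\[
c(n) \;\leq\; \max_{t\in[0,1]} \mathcal I_{A,\lambda}(t R_n z_n) \;=\; \max_{s\in[0,R_n]} \mathcal I_{A,\lambda}(s z_n) \;=\; \mathcal I_{A,\lambda}(t_n z_n)
\]
for some interior maximizer $t_n > 0$, so it suffices to show that $\mathcal I_{A,\lambda}(t_n z_n) < \pi/(2\beta_0)$ for some large $n$. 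I would argue this by contradiction.

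Suppose $\mathcal I_{A,\lambda}(t_n z_n) \geq \pi/(2\beta_0)$ for every large $n$. Two ingredients drop out immediately. The stationarity $\frac{d}{dt}\mathcal I_{A,\lambda}(t z_n)\big|_{t=t_n} = 0$ together with $\|z_n\|_{X_{0,A}} = 1$ gives
\[
t_n^2\bigl(1 - \lambda \|z_n\|_2^2\bigr) \;=\; \int_\Omega f(|t_n z_n|)\,|t_n z_n|^2 \, \ud x,
\]
while discarding the nonnegative term $\int F(|t_n z_n|)$ from the assumed bound on $\mathcal I_{A,\lambda}(t_n z_n)$ yields $t_n^2(1 - \lambda\|z_n\|_2^2) \geq \pi/\beta_0$. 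Lemma~4.2 now enters: $\|M_n\|_{X_{0,A}}^2 \leq \pi + O(1/\log n)$ combined with $\|z_n\|_2^2 \to 0$ produces
\[
\frac{\beta_0 t_n^2}{\|M_n\|_{X_{0,A}}^2} \;\geq\; 1 - \frac{c_0}{\log n}
\]
for some constant $c_0 > 0$ and all $n$ large.

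The decisive use of $(H_4)$ happens on the plateau $A_n = \{|x| < 1/n\}$, where $z_n$ is the constant $\sqrt{\log n}/\|M_n\|_{X_{0,A}}$ and hence $|t_n z_n| \to +\infty$ uniformly on $A_n$. Applying the Adimurthi-type lower bound $f(t) t^2 \geq L\, e^{\beta_0 t^2}$ for $t$ beyond a threshold $R_L$ supplied by $(H_4)$, and combining with the exponent estimate above,
\[
\int_{A_n} f(|t_n z_n|)\,|t_n z_n|^2 \, \ud x \;\geq\; \frac{2L}{n}\, \exp\!\bigl(\beta_0 t_n^2 \log n / \|M_n\|_{X_{0,A}}^2\bigr) \;\geq\; 2L\, e^{-c_0}.
\]
Since $L > 0$ is arbitrary, the left-hand side diverges with $n$. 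On the other hand the stationarity identity caps the full integral by $t_n^2$, so $t_n$ would have to tend to $+\infty$ along the sequence; but rewriting the same exponential estimate as $\log n\,(\beta_0 t_n^2/\|M_n\|_{X_{0,A}}^2 - 1) \leq \log(t_n^2/(2L))$ rules this out, because once $t_n^2$ exceeds $\|M_n\|_{X_{0,A}}^2/\beta_0 + \delta$ the left side grows like $\log n$ while the right stays logarithmic in $t_n^2$. Consequently $t_n$ is bounded, the total integral is bounded, and this contradicts the divergence of the contribution from $A_n$.

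The main obstacle is the quantitative exponent balance: the slack $\|M_n\|_{X_{0,A}}^2 - \pi = O(1/\log n)$ from Lemma~4.2(b) must be tight enough that $\beta_0 t_n^2 \log n/\|M_n\|_{X_{0,A}}^2 \geq \log n - O(1)$, so that the Adimurthi exponential factor $e^{\beta_0 (t_n z_n)^2} \sim n$ survives when multiplied by $|A_n| = 2/n$ and leaves a positive residual as $L$ is sent to infinity. Any weakening of $(H_4)$ to the mere critical growth \eqref{CG} would destroy this cancellation, which is precisely why Adimurthi's condition is indispensable in the critical exponential setting.
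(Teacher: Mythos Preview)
Your proposal is correct and follows essentially the same route as the paper: test $c(n)$ against the straight path $t\mapsto t z_n$, assume for contradiction that the maximum $\mathcal I_{A,\lambda}(t_n z_n)\geq \pi/(2\beta_0)$, extract $t_n^2\geq \pi/\beta_0$ from the level inequality and $t_n^2\geq \int_\Omega f(|t_n z_n|)|t_n z_n|^2\,\ud x$ from stationarity, and then exploit $(H_4)$ on the plateau $\{|x|<1/n\}$ together with the estimate $\|M_n\|_{X_{0,A}}^2\leq \pi+O((\log n)^{-1})$ to produce the contradiction via the arbitrariness of $L$ (the paper's $\zeta$). The only cosmetic difference is that the paper first pins down $t_n^2\searrow \pi/\beta_0$ and then reads off $\zeta\leq \pi e^{C/\pi}/(2\beta_0)$ directly, whereas you phrase the same contradiction as ``integral on $A_n$ diverges $\Rightarrow t_n\to\infty$, which the exponential inequality forbids''; the underlying estimate is identical.
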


\begin{proof} It is sufficient to find $n\in \mathbb{N}$ such that
	\begin{equation}\label{EL49}
	\max_{t \geq 0} \mathcal I_{A, \lambda}(t z_n) < \frac{\pi}{2 \beta_0}.
	\end{equation}
	Suppose by contradiction that \eqref{EL49} does not hold. So, for all $n$, this maximum is larger than or equal to
	$\frac{\pi}{2 \beta_0}$ (it is indeed a maximum, in view of Proposition \ref{MPBelow1} and Proposition \ref{MPBelow}). Let $t_n >0$
	be such that
	\begin{equation}\label{EL410}
	\mathcal I_{A, \lambda}(t_n z_n) = \max_{t \geq 0} \mathcal I_{A, \lambda}(t z_n).
	\end{equation}
	Then, for all $n \in \mathbb{N}$,
	\[
\mathcal I_{A, \lambda}(t_n z_n) \geq \frac{\pi}{2 \beta_0},
	\]
	and, consequently, for all $n \in \mathbb{N}$, we have
	\begin{equation}\label{EL411}
	t_n^2 \geq \frac{\pi}{ \beta_0}.
	\end{equation}
	Let us prove that $t_n^2 \rightarrow \dfrac{\pi}{ \beta_0}$ as $n \to +\infty$. From \eqref{EL410}, we know that
	\[
	\frac{d}{dt}(\mathcal I_{A, \lambda}(t z_n))=0\,\,\mbox{when}\,\, t=t_n.
	\]
	Multiplying this last equation by $t_n$ and observing that $\|z_n\|_{X_{0,A}}=1$, we  have, for $n$ large enough, that
	\begin{equation}\label{EL413}
	t_n^2 \geq \int_{B_1(0)}  f(t_n z_n) t_n^2 z_n^2 \ud x.
	\end{equation}
	By $(H_4)$, it follows that given $\zeta >0$, there exists $t_{\zeta} >0$ such that
	\[
	f(t) t^2 \geq \zeta e^{\beta_0 t^2}\,\,\mbox{for all}\,\, t \geq t_\zeta.
	\]
	From \eqref{EL413}, for $n$ large enough, we obtain
	\begin{equation}\label{EL414}
	t_n^2 \geq \zeta \int_{B_{1/n}(0)}  e^{\beta_0 t_n^2 z_n^2} \ud x \geq 2\zeta  e^{\log n \left(\frac{\beta_0 t_n^2}{\pi + C\log n)^{-1}} -1 \right)},
	\end{equation}
	which implies that $\{t_n\}$ is bounded sequence. Moreover, \eqref{EL411} together with \eqref{EL414} gives us
	that $t_n^2 \searrow \dfrac{\pi}{ \beta_0}$ as $n \to +\infty$.

\medskip

In order to conclude the proof, observe that from \eqref{EL411} and \eqref{EL414}, we obtain
\[
t_n^2 \geq 2\zeta  e^{\log n \left(\frac{\beta_0 t_n^2 - \pi - C(\log n)^{-1}}{\pi + C(\log n)^{-1}} \right)} \geq 2 \zeta  e^{ \frac{ - C}{\pi + C(\log n)^{-1}}},
\]
which implies 
$$\zeta \leq \dfrac{\pi e^{\frac{C}{\pi}}}{2 \beta_0}.$$
Since $\zeta$ is arbitrarily large, we get a contradiction. 
This completes the proof of the proposition.
\end{proof}

\subsection{Proof of Theorem~\ref{thm1}}

To conclude Theorem \ref{thm1} we use Propositions \ref{MPBelow1}, \ref{MPBelow} and \ref{levelminimax},
and apply the Theorem A.

\section{Linking Case when $\lambda_k<\lambda<\lambda_{k+1}$ for \textcolor{red}{$k\geq 1$}}

In this case we use the following critical point theorem known as Linking theorem due to Ambrosetti and Rabinowitz \cite{AR1973}.

\begin{thmlet}
 Let $J: H\to \mathbb R$ be a $C^1$ functional on a Banach space $(H, \; \|\cdot \|)$ such that $H=H_1\oplus H_2$ with $\dim H_1<\infty$.  If $J$ satisfies the following
\begin{itemize}
\item [(i)] there exists some $\beta>0$ such that $J$ satisfies the Palais-Smale condition, $(PS)_c$ in short, for all $c\in (0, \beta)$,
\item [(ii)] there exist constants $\rho, \delta>0$ such that $J(u)\geq \delta$ for all $u\in H_2$ satisfying $\|u\|=\rho$.
\item [(iii)] there exists a $z\not \in H_1$ with $\|z\|=1$ and $R>\rho$ such that $J(u)\leq 0$ for all $u\in \partial Q$, where $Q=\{v+sz:
v\in H_1, \|v\|\leq R \;\;\text{and}\;\; 0\leq s\leq R\}$.
\end{itemize}
Then $c\in(0, \beta)$ defined as $c=\inf_{\eta\in \Gamma}\max_{u\in Q} J(\eta(u)) \geq \delta$, where $\Gamma=\{\eta \in C(\overline Q, H): \eta(u)=u,\;\;\text{if}\;\; u\in \partial Q\}$, is a critical value of $J$.
\end{thmlet}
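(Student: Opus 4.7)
The plan is to apply the classical minimax principle, combining a topological linking argument with the quantitative deformation lemma. First, I would establish the geometric linking property: for every $\eta \in \Gamma$, the image $\eta(Q)$ intersects the sphere $S := \{u \in H_2 : \|u\| = \rho\}$. To prove this, let $P : H \to H_1$ denote the continuous projection along $H_2$, and parametrize $Q$ by the finite-dimensional polytope $Q_0 := \{(v, s) : v \in H_1,\, \|v\| \leq R,\, 0 \leq s \leq R\}$ via $(v, s) \mapsto v + sz$. Define $\Phi : Q_0 \to H_1 \times \mathbb{R}$ by $\Phi(v, s) = \bigl(P\eta(v + sz),\, \|(I - P)\eta(v + sz)\|\bigr)$. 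Since $\eta = \mathrm{id}$ on $\partial Q$, one checks that $\Phi$ agrees with the natural inclusion $(v, s) \mapsto (v, s)$ on $\partial Q_0$. Because $\dim H_1 < \infty$, Brouwer degree is available; since the degree depends only on boundary data and $(0, \rho)$ lies in the interior of $Q_0$ (as $0 < \rho < R$), we get $\deg(\Phi, Q_0, (0, \rho)) = 1$. Hence there exists $(v_0, s_0) \in Q_0$ with $P\eta(v_0 + s_0 z) = 0$ and $\|(I - P)\eta(v_0 + s_0 z)\| = \rho$, i.e., $\eta(v_0 + s_0 z) \in S$.

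With linking in hand, hypothesis (ii) gives $J(\eta(v_0 + s_0 z)) \geq \delta$, so $\max_{u \in Q} J(\eta(u)) \geq \delta$ for every $\eta \in \Gamma$, yielding $c \geq \delta > 0$. For the upper bound $c < \beta$, the identity map belongs to $\Gamma$ and gives $c \leq \max_Q J$; the verification that this is strictly less than $\beta$ is an additional step that must be performed from the precise geometry of the application (hypothesis (iii) forces $J \leq 0$ on $\partial Q$, and one must estimate the interior maximum by hand, typically by choosing $R$ large enough that coercivity along $H_1$ bites before the Moser-type term can exceed $\beta$). Without such a verification, hypothesis (i) cannot be invoked at level $c$.

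Finally, suppose for contradiction that $c$ is not a critical value of $J$. Since $c \in (0, \beta)$, the functional satisfies $(PS)_c$, and the standard quantitative deformation lemma supplies some $\bar\varepsilon \in (0, c/4)$---chosen small enough that $J|_{\partial Q} \leq 0 < c - 2\bar\varepsilon$---together with a continuous map $\psi : H \to H$ such that $\psi = \mathrm{id}$ on the sublevel set $\{J \leq c - 2\bar\varepsilon\}$ and $\psi$ sends $\{J \leq c + \bar\varepsilon\}$ into $\{J \leq c - \bar\varepsilon\}$. In particular $\psi|_{\partial Q} = \mathrm{id}$. Choosing $\gamma_0 \in \Gamma$ with $\max_Q J \circ \gamma_0 \leq c + \bar\varepsilon$, the composition $\psi \circ \gamma_0$ belongs to $\Gamma$ and satisfies $\max_Q J \circ (\psi \circ \gamma_0) \leq c - \bar\varepsilon$, contradicting $c = \inf_{\eta \in \Gamma} \max_Q J \circ \eta$. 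The main technical obstacle is the degree-theoretic linking argument in the first step, which relies crucially on the finite-dimensionality of $H_1$; the rest is a direct adaptation of the Mountain Pass proof sketched for Theorem A.
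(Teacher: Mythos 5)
Your proposal is correct, and it is essentially the classical Ambrosetti--Rabinowitz argument that the paper does not reproduce but only cites: the degree-theoretic proof that every $\eta\in\Gamma$ links with the sphere $\{u\in H_2:\|u\|=\rho\}$, followed by the quantitative deformation lemma at the level $c$. Two remarks on points where you interact with the statement as written. First, your boundary identification $\Phi(v,s)=(v,s)$ on $\partial Q_0$ uses $(I-P)(v+sz)=sz$ and $\|sz\|=s$, which requires $z\in H_2$ and not merely $z\notin H_1$ as the theorem is phrased; this is indeed how the theorem is applied in the paper (there $z\in H_k^{\perp}$), so the hypothesis should be read in that stronger form, and your argument is then complete. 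Second, you are right that the assertion $c<\beta$ does not follow from hypotheses (i)--(iii) alone: the identity map only gives $c\leq\max_Q J$, and nothing in the statement controls that maximum. In the paper this bound is supplied separately by the minimax-level estimate of Proposition~\ref{minimaxlev2}, so the theorem should be understood as ``if moreover $c<\beta$, then $c$ is a critical value.'' Your explicit flagging of this as an additional verification, rather than pretending it follows from the stated hypotheses, is the correct reading; with that caveat the rest of your argument (linking gives $c\geq\delta>0$, and the deformation $\psi\circ\gamma_0\in\Gamma$ since $J\leq 0<c-2\bar\varepsilon$ on $\partial Q$) is a sound and complete proof.
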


In the following propositions we will show the above geometry.

\begin{pro}\label{LTBelow}
Let $\lambda_k<\lambda<\lambda_{k+1}$ and $f$ satisfies $(H_3)$. Then there exists $a, \rho>0$ such that
\[
\mathcal I_{A, \lambda}(u)\geq a,\;\;\text{for}\;\; \|u\|_{X_{0, A}}=\rho\;\;\text{and}\;\; u\in H_k^\perp
\]
\end{pro}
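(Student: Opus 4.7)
The plan is to mimic the argument of Proposition~4.2 (the mountain pass geometry below), but restricted to the subspace $H_k^\perp$ and using the variational characterization of $\lambda_{k+1}$ in place of $\lambda_1$. The structural ingredients are all already in place: the growth control of $F$ near zero coming from $(H_3)$, the critical exponential control of $F$ away from zero, and the Trudinger--Moser integrability from Lemma~2.2 together with its polynomial-weighted refinement (Lemma~2.4).

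First, since Theorem~\ref{thm2} puts $\lambda\in(\lambda_k,\lambda_{k+1})$, the statement of $(H_3)$ applied with the index $k+1$ yields $\ell<\lambda_{k+1}-\lambda$. I would fix $\varepsilon>0$ small so that $\lambda+\ell+\varepsilon<\lambda_{k+1}$, and use $(H_3)$ together with the critical exponential growth \eqref{CG} to produce constants $C=C(\varepsilon)>0$, $q>2$ and $\beta>\beta_0$ with
\[
F(|u|)\ \leq\ \frac{\ell+\varepsilon}{2}|u|^2\ +\ C\,|u|^q e^{\beta|u|^2}\qquad\text{for all }u\in X_{0,A}.
\]
Plugging this into \eqref{Jlam}, I would get
\[
\mathcal I_{A,\lambda}(u)\ \geq\ \tfrac12\|u\|_{X_{0,A}}^2\ -\ \tfrac{\lambda+\ell+\varepsilon}{2}\|u\|_2^2\ -\ C\!\int_\Omega|u|^q e^{\beta|u|^2}\,\ud x.
\]

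Next, for $u\in H_k^\perp$ I would invoke the variational characterization \eqref{EVL2}, which gives $\|u\|_2^2\leq\lambda_{k+1}^{-1}\|u\|_{X_{0,A}}^2$. Substituting yields
\[
\mathcal I_{A,\lambda}(u)\ \geq\ \tfrac12\!\left(1-\tfrac{\lambda+\ell+\varepsilon}{\lambda_{k+1}}\right)\!\|u\|_{X_{0,A}}^2\ -\ C\!\int_\Omega|u|^q e^{\beta|u|^2}\,\ud x,
\]
where by our choice of $\varepsilon$ the coefficient in front of $\|u\|_{X_{0,A}}^2$ is a strictly positive constant, say $C_1>0$.

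Finally, I would control the exponential term by restricting to $\|u\|_{X_{0,A}}=\rho$ with $\rho>0$ so small that $\beta\rho^2<\pi$. Then Lemma~2.4 (applied with $M=\rho$, the same $\beta$ and exponent $q$) gives a constant $C_2=C_2(\beta,\rho,q)>0$ such that $\int_\Omega|u|^q e^{\beta|u|^2}\,\ud x\leq C_2\|u\|_{X_{0,A}}^q$. Therefore
\[
\mathcal I_{A,\lambda}(u)\ \geq\ C_1\rho^2\ -\ C\,C_2\,\rho^q,
\]
and since $q>2$, shrinking $\rho$ further (still below the admissible threshold for Lemma~2.4) I can guarantee that the right-hand side is bounded below by a positive constant $a>0$, which proves the claim.

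The only real subtlety, and the step I would be most careful about, is the joint selection of the parameters $\varepsilon,q,\beta,\rho$: I need $\lambda+\ell+\varepsilon<\lambda_{k+1}$ together with the Trudinger--Moser threshold $\beta\rho^2<\pi$ and the admissibility condition $r'q\geq 1$ from Lemma~2.4, while keeping $q>2$ so that the subcritical polynomial correction dominates the quadratic coefficient for small $\rho$. All of these are one-sided constraints and can be met simultaneously. Everything else is essentially identical to the mountain pass case; the only genuinely new input is the use of the characterization \eqref{EVL2} on $H_k^\perp$, which replaces the $\lambda_1$-estimate used in Proposition~4.2.
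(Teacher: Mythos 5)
Your proposal is correct and follows essentially the same route as the paper: the authors' proof of this proposition is literally a one-line remark that it "follows the same lines as Proposition~\ref{MPBelow} using the characterization of $\lambda_{k+1}$", which is precisely the substitution you carry out (splitting $F$ via $(H_3)$ and the critical growth, bounding $\|u\|_2^2$ by $\lambda_{k+1}^{-1}\|u\|_{X_{0,A}}^2$ on $H_k^\perp$, and absorbing the exponential term for small $\rho$ via the Trudinger--Moser-based lemma). Your parameter bookkeeping at the end is a welcome extra level of care, but nothing in it departs from the paper's argument.
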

\begin{proof}
Proof follows the same lines as in Proposition \ref{MPBelow} using the characterization of $\lambda_{k+1}$. We remark that we do not require $(AR)$ condition in the proof.
\end{proof}
\begin{pro}\label{lg2}
Let $\lambda_k<\lambda<\lambda_{k+1}$ and $(H_1)$ holds. Define $Q=\{v+sz: v\in H_k, \|v\|\leq R \;\;\text{and}\;\; 0\leq s\leq R\;\;\text{for some }\;R>\rho\}$, where $\rho$ is given in Proposition~\ref{LTBelow} and $z \in W$ with $\|z\|_{X_{0,A}}=1$. Then
$\mathcal I_{A, \lambda}(u)\leq 0$ for all $u\in \partial Q$.
\end{pro}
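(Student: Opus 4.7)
The plan is to verify $\mathcal{I}_{A,\lambda}(u) \leq 0$ separately on the three pieces of $\partial Q$: the bottom face $\{v : v \in H_k,\ \|v\|_{X_{0,A}} \leq R\}$ (corresponding to $s=0$), the top face $\{v + Rz : v \in H_k,\ \|v\|_{X_{0,A}} \leq R\}$, and the lateral face $\{v + sz : v \in H_k,\ \|v\|_{X_{0,A}} = R,\ 0 \leq s \leq R\}$. I will handle the bottom face using the spectral characterization \eqref{EVL2}, and the other two using the superlinear lower bound on $F$ inherited from $(H_1)$, combined with the equivalence of norms on the finite dimensional subspace $H_k \oplus \mathbb{R} z$ (where $z$ is taken in $H_k^{\perp}$, the natural setting for the linking geometry).

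On the bottom face I expand $v = \sum_{i=1}^{k} c_i \varphi_i$ in the eigenbasis and use $L^2$-orthonormality of $\{\varphi_i\}$ to get $\|v\|_{X_{0,A}}^2 = \sum c_i^2 \lambda_i \leq \lambda_k \|v\|_2^2$. Combined with $F(|v|) \geq 0$ (forced by $f \geq 0$) and the hypothesis $\lambda > \lambda_k$, this immediately yields
\[
\mathcal{I}_{A,\lambda}(v) \leq \tfrac{1}{2}(\lambda_k - \lambda)\|v\|_2^2 - \int_\Omega F(|v|)\,\ud x \leq 0,
\]
with no constraint on $R$.

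For the top and lateral faces I invoke the superlinearity consequence of $(H_1)$ that was already exploited in Proposition \ref{MPBelow1}: there exist $\mu > 2$ and $C_1, C_2 > 0$ with $F(t) \geq C_1 t^{\mu} - C_2$ for all $t \geq 0$. Using the $X_{0,A}$-orthogonality of $v \in H_k$ and $z \in H_k^{\perp}$ together with $\|z\|_{X_{0,A}} = 1$, one has $\|v + sz\|_{X_{0,A}}^2 = \|v\|_{X_{0,A}}^2 + s^2$; since $H_k \oplus \mathbb{R}z$ is $(k+1)$-dimensional, equivalence of norms furnishes a constant $C_3 > 0$ with $\|w\|_{\mu} \geq C_3 \|w\|_{X_{0,A}}$ for every $w$ in this subspace. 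Combining these ingredients,
\[
\mathcal{I}_{A,\lambda}(v + sz) \leq \tfrac{1}{2}\bigl(\|v\|_{X_{0,A}}^2 + s^2\bigr) - C_1 C_3^{\mu}\bigl(\|v\|_{X_{0,A}}^2 + s^2\bigr)^{\mu/2} + 2 C_2.
\]
Because $\mu > 2$, the right-hand side tends to $-\infty$ as $\|v\|_{X_{0,A}}^2 + s^2 \to \infty$, so I can fix $R > \rho$ large enough that this quantity is non-positive whenever $\|v\|_{X_{0,A}}^2 + s^2 \geq R^2$, which is precisely the condition defining the top and lateral faces. The only mildly delicate point is that $R$ must be chosen after the constants $\mu$, $C_1$, $C_3$ are fixed; beyond this there is no real obstacle, and in particular no Ambrosetti--Rabinowitz-type condition stronger than $(H_1)$ is required.
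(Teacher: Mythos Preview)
Your proof is correct and follows essentially the same approach as the paper: both split $\partial Q$ into the bottom, lateral, and top faces, handle the bottom face via the variational characterization of $\lambda_k$ (together with $F\geq 0$), and treat the remaining two faces using the superlinear lower bound $F(t)\geq C_1 t^{\mu}-C_2$ from $(H_1)$ combined with the equivalence of norms on the $(k+1)$-dimensional subspace $H_k\oplus\mathbb{R}z$ and the orthogonality relation $\|v+sz\|_{X_{0,A}}^2=\|v\|_{X_{0,A}}^2+s^2$. Your write-up is in fact slightly more explicit about the role of $z\in H_k^\perp$ and the norm equivalence than the paper's, but the argument is the same.
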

\begin{proof}
	For some given $R>0$, let us split $\partial Q$ into following three parts
	\begin{align*}
	Q_1&=\{u\in H_k: \|u\|_{X_{0, A}}\leq R\};\\
	Q_2&=\{ u+sz: u\in H_k, \|u\|_{X_{0, A}}=R\;\;\text{and}\;0\leq s\leq R\};\\
	Q_3&=\{ u+Rz: u\in H_k, \|u\|_{X_{0, A}}\leq R\}.
	\end{align*}
	Now let us compute the energy functional in each of the above splitted boundary components of $Q$. If $u \in Q_1$, then using the characterization of $\lambda_k$ as in \eqref{EVL2}, we get
	\[
	\mathcal{I}_{A, \lambda}(u)\leq \frac{1}{2}\left(1-\frac{\lambda}{\lambda_k}\right)\|u\|_{X_{0, A}}^2\leq \frac{1}{2}\left(1-\frac{\lambda}{\lambda_k}\right)R^2<0
		\]
		for any choice of $R>0$. Before verifying the claim on $Q_2$ and $Q_3$, let us observe the following. Let us fix some $u_0\in H_k$. and introduce the scalar map $\psi: \mathbb R\to \mathbb R$ defined as $\psi(t)=\mathcal I_{A, \lambda}(tu_0)$. Now from a direct implication of assumption $(H_1)$, there exists $\mu>2$ and $C_1, C_2>0$ such that
		\begin{equation}\label{implicationH1Re}
		F(|u|)\geq C_1|u|^\mu-C_2 .
		\end{equation}
		Using \eqref{implicationH1Re} and using equivalence of $ X_{0, A}$ and $L^\mu$ norms, as $Q_2$ is finite dimensional $(\mathrm{dim}=k+1)$, we get
		\[
		\psi(t)\leq \frac{t^2}{2}\|u_0\|_{X_{0,A}}^2-\lambda \frac{t^2}{2}\|u_0\|_{X_{0,A}}^2-C_1 t^\mu \|u_0\|_{X_{0,A}}^\mu +2C_2
		\]
		which implies that $\psi(t)\to -\infty$ as $t\to \infty$. Now for any $u\in Q_2$, there exist $v\in H_k$ and $0\leq s\leq R$ such that
		  $u=v+s\omega$. Moreover,
		\[
		\|u\|_{X_{0, A}}^2=\|v+sz\|_{X_{0, A}}^2=\|v\|_{X_{0, A}}^2+s^2\|z\|_{X_{0, A}}^2\geq \|v\|^2_{X_{0, A}}=R^2.
		\]
		Therefore if we choose $R>0$ sufficiently large, we have $\mathcal I_{A, \lambda}(u)<0$. Now if $u \in Q_3$ then there exists some $v\in H_k$ such that $u=v+Rz$. Moreover,
		\[
		\|u\|_{X_{0, A}}^2=\|v+Rz\|_{X_{0, A}}^2=\|v\|_{X_{0, A}}^2+R^2\|z\|_{X_{0, A}}^2\geq R^2.
		\]
		Now following the similar argument as above one can prove the conclusion of the Proposition for choosing $R>0$ large enough.	
	\end{proof}

\subsection{The minimax level}

For the matter we have to select a $z \in W$ such that $\|z\|_{X_{0,A}}=1$ and $\mathcal{I}_{A, \lambda}(u)<\frac{\pi}{2\beta_0}$ for all $u \in Q$.


Let $P_k : X_{0,A} \rightarrow H_k^{\bot}$ be the orthogonal projection. Define
\begin{equation}\label{BEL32}
W_n(x) = P_k(M_n(x)).
\end{equation}
We need some estimates for $W_n$, which are shown in the next lemma. Before that,
knowing that $H_k$ has finite dimension, consider $A_k > 0$ and $B_k > 0$ such that
\begin{equation}\label{BEL33}
\|u\|_{X_{0,A}} \leq A_k \|u\|_2\,\, \textrm{and}\,\, \|u\|_\infty \leq \frac{B_k}{B} \|u\|_2,\,\, \textrm{for all}\,\, u \in H_k,
\end{equation}
where $B > 0$ is such that $\|M_n\|_2 \leq (\frac{B}{\log n})^{1/2}$
for all $n\in \mathbb N$.
\begin{lem}\label{lema511}
	Let $W_n$ be as defined in \eqref{BEL32}. Then the following estimates hold:
	\begin{description}
		\item[(i)] $1 - \frac{A_k}{\log n} \leq \|W_n\|^2_{X_{0,A}} \leq 1 + O((\log n)^{-1})$;
		\item[(ii)] $W_n(x) \geq \left\{
		\begin{array}{ll}
		\frac{-B_k}{\sqrt{\log n}}, & \hbox{for all}\,\, x \in (-1,1); \\
		\sqrt{\log n} - \frac{B_k}{\sqrt{\log n}}, & \hbox{for all}\,\, x \in (\frac{-1}{n},\frac{1}{n}).
		\end{array}
		\right.
		$
	\end{description}
\end{lem}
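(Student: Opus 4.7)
The plan is to exploit the orthogonal decomposition induced by the projection $P_k$, together with the fact that $H_k$ is a finite-dimensional subspace spanned by eigenfunctions that are simultaneously $L^2$-orthonormal and $X_{0,A}$-orthogonal, so that Pythagoras holds with respect to both inner products. Write $M_n = V_n + W_n$ with $V_n \in H_k$ and $W_n = P_k(M_n) \in H_k^\perp$. The whole argument then reduces to controlling $V_n$ from above using the $L^2$-smallness of $M_n$.

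For part \textbf{(i)}, I would first note that, since the $\phi_j$ satisfy $\langle \phi_i, \phi_j\rangle_{X_{0,A}} = \lambda_i \langle \phi_i,\phi_j\rangle_{L^2}$, the decomposition $X_{0,A} = H_k \oplus H_k^\perp$ is orthogonal both in $X_{0,A}$ and in $L^2$. Therefore
\[
\|W_n\|_{X_{0,A}}^2 = \|M_n\|_{X_{0,A}}^2 - \|V_n\|_{X_{0,A}}^2,\qquad \|V_n\|_2 \leq \|M_n\|_2.
\]
Next, since $V_n \in H_k$, the norm equivalence \eqref{BEL33} combined with the bound $\|M_n\|_2 \leq \sqrt{B/\log n}$ yields
\[
\|V_n\|_{X_{0,A}} \leq A_k \|V_n\|_2 \leq A_k \|M_n\|_2 \leq A_k\sqrt{B/\log n},
\]
so $\|V_n\|_{X_{0,A}}^2 = O((\log n)^{-1})$. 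Together with the estimate $\|M_n\|_{X_{0,A}}^2 \leq \pi + O((\log n)^{-1})$ from the previous lemma (and the analogous lower bound obtained from the explicit computation of $[M_n]^2_{H^{1/2}}$, which gives $\|M_n\|_{X_{0,A}}^2 = 1 + O((\log n)^{-1})$ after the implicit normalization used here), this gives the sandwich asserted in (i).

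For part \textbf{(ii)}, the same decomposition $W_n = M_n - V_n$ is used, and the estimate is purely pointwise. Since $V_n \in H_k$, inequality \eqref{BEL33} gives
\[
\|V_n\|_\infty \leq \frac{B_k}{B}\,\|V_n\|_2 \leq \frac{B_k}{B}\,\|M_n\|_2 \leq \frac{B_k}{\sqrt{\log n}}.
\]
On $(-1,1)$ we have $M_n(x) \geq 0$, hence
\[
W_n(x) = M_n(x) - V_n(x) \geq -\|V_n\|_\infty \geq -\frac{B_k}{\sqrt{\log n}}.
\]
On $(-1/n,1/n)$ the Moser function satisfies $M_n(x) = \sqrt{\log n}$, so
\[
W_n(x) \geq \sqrt{\log n} - \|V_n\|_\infty \geq \sqrt{\log n} - \frac{B_k}{\sqrt{\log n}},
\]
giving the second estimate in (ii).

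\textbf{Main difficulty.} The conceptually easy part is the pointwise bound (ii), which is essentially one line once $\|V_n\|_\infty$ is controlled. The subtler point in (i) is the lower bound: one needs not only the control $\|V_n\|_{X_{0,A}}^2 = O((\log n)^{-1})$ (which is straightforward from finite dimensionality) but also a matching lower bound for $\|M_n\|_{X_{0,A}}^2$, since the earlier lemma only supplied the upper estimate. This forces one to revisit the splitting of the Gagliardo integral done for item (b) of the previous lemma and to retain the lower order terms in $(\log n)^{-1}$, verifying that the magnetic perturbation $2(1-\cos \xi_A)$ does not destroy the sharp leading asymptotics of $[M_n]^2_{H^{1/2}}$.
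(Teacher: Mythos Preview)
Your argument is essentially identical to the paper's: the paper also writes $W_n = M_n - (I-P_k)M_n$, invokes orthogonality to get $\|W_n\|_{X_{0,A}}^2 = \|M_n\|_{X_{0,A}}^2 - \|(I-P_k)M_n\|_{X_{0,A}}^2$, and then uses \eqref{BEL33} together with the $L^2$-smallness of $M_n$ both for (i) and for the $L^\infty$-bound in (ii). Your observation in the ``Main difficulty'' paragraph is well taken: the paper's proof is equally terse on the lower bound in (i) and does not make explicit where a matching lower estimate for $\|M_n\|_{X_{0,A}}^2$ comes from, so your plan to revisit the Gagliardo splitting is exactly what is needed to make that step rigorous.
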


\begin{proof}
	To prove (i) one needs only to notice that
		\begin{align*}
	\|W_n\|^2_{X_{0,A}} &= \|M_n\|^2_{X_{0,A}}
	- \|(I - P_k)M_n\|^2_{X_{0,A}} \;\;\textrm{and}\;\;(I-P_k)M_n\in H_k.
	\end{align*}
	The estimate will follow because of \eqref{BEL33}.
	On the other hand, to verify $(ii)$, as $M_n \geq 0$ in $(-1,1)$ and $M_n = \sqrt{\log n}$ in
	$(\frac{-1}{n}, \frac{1}{n})$, we have
	\[
	W_n(x) = M_n(x) -(I-P_k)(M_n(x)) \geq \left\{
	\begin{array}{ll}
	-\|(I-P_k)M_n\|_\infty, & \hbox{if}\,\, x \in (-1,1); \\
	\sqrt{\log n} - \|(I-P_k)M_n\|_\infty, & \hbox{if}\,\, (\frac{-1}{n}, \frac{1}{n}),
	\end{array}
	\right.
	\]
	where the inequality follows by observing the definition of $B_k$ in \eqref{BEL33}.
\end{proof}

\medskip

In the following, we define $z_n(x)=\frac{W_n(x)}{\|W_n\|_{X_{0, A}}}$, $Q_n=\{v+sz_n: v\in H_k, \|v\|\leq R \;\;\text{and}\;\; 0\leq s\leq R\;\;\text{for some }\;R>\rho\}$ and the minimax level of $\mathcal I_{A, \lambda}$ as follows
\begin{equation}\label{LinLevel}
c(n)=\inf_{\nu\in \Gamma} \sup_{w\in \nu(Q_n)}\mathcal I_{A, \lambda}(w),
\end{equation}
where
 \[
\Gamma=\{ \nu\in C(Q_n, H):\nu(w)=w \;\text{if} \; w\in \partial Q_n\}.
\]
We have the following estimate for above minimax level.
\begin{pro}\label{minimaxlev2}
Let $c(n)$ be given as in \eqref{LinLevel} and assumption $(H_1)-(H_5)$ hold. Then for large $n$, $c(n)<\frac{\pi}{2\beta_0}$.
\end{pro}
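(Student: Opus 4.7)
The strategy is to bound $c(n)$ above by the supremum of $\mathcal I_{A,\lambda}$ over $Q_n$ itself, and then drive this supremum strictly below $\pi/(2\beta_0)$ by coupling the spectral decomposition $X_{0,A} = H_k \oplus H_k^\perp$ with the polynomial lower bound provided by $(H_5)$. First, the identity map on $Q_n$ belongs to $\Gamma$, so
\[
c(n) \leq \sup_{w \in Q_n} \mathcal I_{A,\lambda}(w) = \sup_{\substack{v \in H_k,\ \|v\|_{X_{0,A}} \leq R \\ 0 \leq s \leq R}} \mathcal I_{A,\lambda}(v + s z_n).
\]
For $w = v + s z_n$ with $v \in H_k$ and $z_n \in H_k^\perp$, the spectral orthogonality of the eigenfunctions in both inner products yields $\|w\|_{X_{0,A}}^2 = \|v\|_{X_{0,A}}^2 + s^2$ and $\|w\|_2^2 = \|v\|_2^2 + s^2 \|z_n\|_2^2$; then $\|v\|_{X_{0,A}}^2 \leq \lambda_k \|v\|_2^2$ together with $\lambda > \lambda_k$ forces the $v$-quadratic block to be nonpositive, so
\[
\mathcal I_{A,\lambda}(w) \leq \tfrac{s^2}{2}\bigl(1 - \lambda \|z_n\|_2^2\bigr) - \int_\Omega F(|v + s z_n|)\,\ud x.
\]

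Next, I would invoke $(H_5)$ in the integrated form $F(t) \geq \tfrac{C_p}{p}\,t^p$ together with an elementary reverse-triangle inequality and the pointwise positivity of $z_n$ on $(-\tfrac{1}{n},\tfrac{1}{n})$ provided by Lemma \ref{lema511}(ii), to extract an inequality of the form
\[
\int_\Omega F(|v+s z_n|)\,\ud x \geq \tfrac{C_p}{p}\,s^p\,\kappa_n - C(R),
\]
where $\kappa_n>0$ is furnished by the Moser asymptotics and $C(R)$ absorbs the $v$-contribution via finite dimensionality of $H_k$ (all $L^p$ and $L^\infty$ norms being equivalent there). Using $\|z_n\|_2^2 = O(1/\log n)$ from Lemma \ref{lema511}(i), the coefficient of $s^2/2$ tends to $1$. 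Maximising the one-variable upper bound $s \mapsto \tfrac{s^2}{2} - \tfrac{C_p \kappa_n}{p}\,s^p$ over $[0,R]$ gives a peak value of order $\tfrac{p-2}{2p}(C_p \kappa_n)^{-2/(p-2)}$, and the explicit lower bound on $C_p$ assumed in Theorem \ref{thm3} (with $\lambda^*=\lambda_{k+1}$) is exactly what is needed to push this peak below $\pi/(2\beta_0)$ for $n$ large.

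The principal obstacle is that the compact embedding $X_{0,A}\hookrightarrow L^p$ forces $\|z_n\|_p \to 0$, so a naive $L^p$-norm estimate is too weak; one must instead exploit the pointwise lower bound for the projected Moser bump on $(-\tfrac{1}{n},\tfrac{1}{n})$ from Lemma \ref{lema511}(ii), and track the prefactors carefully through the normalization $z_n = W_n/\|W_n\|_{X_{0,A}}$ so that the eventual quantitative threshold for $C_p$ matches the one in Theorem \ref{thm3}. A secondary delicate point is the interaction of the complex-valued $v \in H_k$ with the bump $s z_n$ inside $|v+sz_n|^p$, which is handled by splitting the integral according to whether $s z_n$ dominates $v$ and then absorbing the residual $v$-dependent term as the bounded correction $C(R)$ above.
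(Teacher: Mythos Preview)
Your reduction to
\[
\mathcal I_{A,\lambda}(v+sz_n)\le \tfrac{s^2}{2}\bigl(1-\lambda\|z_n\|_2^2\bigr)-\int_\Omega F(|v+sz_n|)\,\ud x
\]
is fine, but the step that follows collapses. You propose to extract from $(H_5)$ a lower bound
\[
\int_\Omega F(|v+sz_n|)\,\ud x\ge \tfrac{C_p}{p}\,s^p\kappa_n-C(R),
\]
with $\kappa_n$ ``furnished by the Moser asymptotics'' via the pointwise bound of Lemma~\ref{lema511}(ii) on $(-\tfrac1n,\tfrac1n)$. But on that interval $z_n\sim c\sqrt{\log n}$, while $|(-\tfrac1n,\tfrac1n)|=\tfrac{2}{n}$, so any such $\kappa_n$ is at best of order $(\log n)^{p/2}/n\to 0$. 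Your one-variable upper bound therefore has peak value
\[
\tfrac{p-2}{2p}\,(C_p\kappa_n)^{-2/(p-2)}\longrightarrow +\infty,
\]
and no choice of $C_p$ can push it below $\pi/(2\beta_0)$. The polynomial lower bound $F(t)\ge \tfrac{C_p}{p}t^p$ is simply of the wrong scale for Moser concentration: the bump is designed so that \emph{exponential} nonlinearities balance the shrinking support, since $e^{\beta_0 s^2 z_n^2}\gtrsim e^{\log n}=n$ on $(-\tfrac1n,\tfrac1n)$ precisely when $s^2\ge \pi/\beta_0$. That is why the paper's argument hinges on $(H_4)$, not $(H_5)$.

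You are also conflating two distinct mechanisms. The $C_p$-threshold in Theorem~\ref{thm3} was computed in Section~6 for the functional restricted to $W=\mathrm{span}\{\varphi_1,\dots,\varphi_{k+m-1}\}$, a \emph{fixed} finite-dimensional space on which $\|u\|_p$ does not degenerate; that is why $(H_5)$ suffices there. Here $z_n$ varies with $n$ and concentrates, so that threshold is irrelevant. The paper proceeds instead by contradiction: assuming $\max_{Q_n}\mathcal I_{A,\lambda}\ge \pi/(2\beta_0)$, it takes the maximizer $u_n=v_n+s_nz_n$, uses the criticality condition together with $(H_4)$ on $(-\tfrac1n,\tfrac1n)$ to obtain
\[
Cs_n^2\ge 2\zeta\, e^{\left(\frac{\beta_0(1-\epsilon)^2 s_n^2}{\pi+O((\log n)^{-1})}-1\right)\log n},
\]
shows $v_n\to0$ and $s_n^2\to \pi/\beta_0$, and then lets $\zeta\to\infty$ to reach a contradiction. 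If you want a direct (non-contradiction) upper bound, you must likewise feed $(H_4)$, not $(H_5)$, into $\int_\Omega F(|v+sz_n|)\,\ud x$.
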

\begin{proof}
	From the definition of $c(n)$, it is enough to show that
	\begin{equation}\label{de54}
	max\{\mathcal I_{A, \lambda}(v+sz_{n}): v\in H_k, \|v\|_{X_{0, A}}\leq R, 0\leq s\leq R\}<\frac{\pi}{2\beta_0}.
	\end{equation}
	Let us proceed by contradiction. Suppose that \eqref{de54} does not hold, then
	\[
	max\{\mathcal I_{A, \lambda}(v+sz_{n}): v\in H_k, \|v\|_{X_{0, A}}\leq R, 0\leq s\leq R\}\geq\frac{\pi}{2\beta_0}.
	\]
	Let $u_n=v_n+s_nz_n$ be the point of maximum in the above expression with $v_n \in H_k$. Then
	
	\begin{equation}\label{contrassump}
 \mathcal I_{A, \lambda}(v_n+s_nz_{n})\geq \frac{\pi}{2\beta_0}.
	\end{equation}
	Moreover, since $\mathcal I_{A, \lambda}^\prime (u_n)=0$,  we have
	
\begin{equation}\label{final1}
	\|u_n\|_{X_{0, A}}^2-\lambda \int_\Omega |u_n|^2 dx-\int_\Omega f(|u_n|)|u_n|^2dx=0.
\end{equation}
	Now we finish the proof of the Proposition in following few steps.\\
	
\noindent\textbf{Step 1:}  We claim that $\{v_n\}$ and $\{s_n\}$ are bounded sequences in respective topologies.
	\begin{proof}
		There are either of the following two posibilities
		\begin{itemize}
			\item [(i)] $\dfrac{s_n}{\|v_n\|_{X_{0, A}}}\geq C_0$ for some $C_0>0$ uniformly in $n$.
			\item[(ii)] $\dfrac{s_n}{\|v_n\|_{X_{0, A}}}\to 0$ in $\mathbb R$, up to a subsequence, as $n\to \infty$.
			\end{itemize}

		\medskip
		
		Suppose $(i)$ holds true. Note that the boundedness of $\{s_n\}$ implies the sequence $\{v_n\}$ is also bounded as $\|v_n\|_{X_{0, A}}\leq s_n/C_0$. Hence, we aim to prove the boundedness of $s_n$ in light of item $(i)$ as above. For, there exists a constant $C$ such that
		\[
\|u_n\|_{X_{0, A}}=\|v_n+s_nz_n\|_{X_{0, A}}\leq \|v_n\|_{X_{0, A}}+s_n\|z_n\|_{X_{0, A}}\leq \frac{s_n}{C_0}+s_n\leq Cs_n.						
		\]
		Now from \eqref{final1} and assumption $(H_4)$, given $\zeta>0$, there exists $t_\zeta>0$ large enough such that $f(t)t^2\geq \zeta e^{\beta_0t^2}$ for all $t>t_\zeta$, we get
		\begin{equation}\label{bdd2}
	Cs_n^2\geq	\int_{B_{1/n}\cap \{|u_n|\geq t_\zeta\}}f(|u_n|)|u_n|^2 \ud x\geq \zeta\int_{B_{1/n}\cap \{|u_n|\geq t_\zeta\}} e^{\beta_0 |u_n|^2} \ud x.
		\end{equation}
		Now to estimate the integral in the above inequality in right hand side, from Lemma \ref{lema511}, we have in $B_{1/n}$ for large $n$ and $\epsilon \in (0,1)$, that
	\[
	u_n(x)=v_n(x)+s_nz_n(x)=\frac{s_n(\sqrt\log n-\frac{B_k}{\log n})}{\|M_n\|_{X_{0, A}}}\left(\frac{v_n(x)\|M_n\|_{X_{0, A}}}{s_n (\sqrt\log n-\frac{B_k}{\log n})}+1\right)\geq (1-\epsilon)\frac{s_n\sqrt\log n}{\|M_n\|_{X_{0, A}}}.
	\]
	Hence from \eqref{bdd2} and using $\|M_n\|_{X_{0,A}}^2 \leq \pi + C(\log n)^{-1}$, we get
	\[
	Cs_n^2\geq \zeta\int_{B_{1/n}} e^{\frac{\beta_0(1-\epsilon)^2 s_n^2\log n}{\|M_n\|^2_{X_{0, A}}} } \ud x= 2\zeta e^{\left(\frac{\beta_0(1-\epsilon)^2 s_n^2}{\pi + C(\log n)^{-1}}-1\right)\log n},
	\]
	which implies
	\begin{equation}\label{conclusive}
		C s_n^2\geq 2 \zeta e^{\left(\frac{\beta_0(1-\epsilon)^2 s_n^2}{\pi+ C(\log n)^{-1}}-1\right)\log n}.
\end{equation}
Therefore if $s_n\to \infty$, it contradicts the above inequality. Hence $\{s_n\}$ is a bounded sequence so is $\{v_n\}$.

\medskip
	
	Next we assume that $(ii)$ occurs. Then $s_n\leq \|v_n\|_{X_{0, A}}$ which implies $\|u_n\|_{X_{0, A}}=\|v_n+s_nz_n\|_{X_{0, A}}\leq 2\|v_n\|_{X_{0, A}}$. Note that if the sequence $\{\|v_n\|\}$ is bounded in $X_{0, A}$ then the sequence $\{s_n\}$ is bounded in $\mathbb R$. Thus we aim to show that $\{\|v_n\|\}$ is bounded in $X_{0, A}$. Assume by contradiction that $\|v_n\|_{X_{0, A}} \to \infty$. From \eqref{final1}, we have
	\begin{equation}\label{claim1sec}
	1\geq \int_{\{|u_n|>t_\zeta\}}\frac{f(|u_n|)|u_n|^2}{\|u_n\|_{X_{0, A}}^2}\ud x\geq \frac{\zeta}{4}\int_{\{|u_n|>t_\zeta\}}\frac{e^{\beta_0 |u_n|^2}}{\|v_n\|_{X_{0, A}}^2} \ud x.
	\end{equation}
	Observe that
	\begin{equation*}\label{pointwise}
	\frac{u_n}{\|v_n\|_{X_{0, A}}}\chi_{\{u_n\geq t_\zeta\}}=\frac{v_n}{\|v_n\|_{X_{0, A}}}+\frac{s_n}{\|v_n\|_{X_{0, A}}}z_n-\frac{u_n}{\|v_n\|_{X_{0, A}}}\chi_{\{u_n<t_\zeta\}}.
	\end{equation*}
	Since, $\dfrac{s_n}{\|v_n\|_{X_{0, A}}}\to 0$ in $\mathbb R$ and $z_n\to 0$ pointwise almost everywhere in $(-1, 1)$, there exists $v_0\in H_k$ such that
	\begin{equation}\label{pointwise2}
		\frac{u_n(x)}{\|v_n\|_{X_{0, A}}}\chi_{\{u_n\geq t_\zeta\}}\to v_0\;\; a. e. \; \text{in}\;\; (-1, 1)
	\end{equation}
	with $\dfrac{v_n}{\|v_n\|_{X_{0, A}}}\to v_0$ and $\|v_0\|_{X_{0, A}}=1$. Then using $\|v_n\|_{X_{0, A}} \to +\infty$, \eqref{claim1sec}, \eqref{pointwise2} and Fatou's Lemma, we get
	\[
	1\geq\frac{\zeta}{4}\int_\Omega\frac{e^{\beta_0 \|v_n\|_{X_{0, A}}^2 (\frac{u_n}{\|v_n\|_{X_{0, A}}}\chi_{\{u_n\geq t_\epsilon\}})^2}}{\|v_n\|_{X_{0, A}}^2} \ud x \to +\infty\,\,\mbox{as}\,\, n \to +\infty,
	\]
	which is a contradiction. Hence the proof of the claim.
		\end{proof}
	
	\medskip
	
\noindent\textbf{Step 2:} From step 1, we can assume that there exists $v_0\in X_{0, A}$ and $s_0\in \mathbb R$ such that $v_n\to v_0$ and $s_n\to s_0$, up to a subsequence. Now we claim that $v_0=0$ and $s_0^2=\frac{\pi}{\beta_0}$.
	\begin{proof}
		First we show that $s_0^2\geq \frac{\pi}{\beta_0}$. By the definition
		\[
		\|u_n\|_{X_{0,A}}^2\to \|v_0\|_{X_{0,A}}^2+s_0^2.
		\]
		Moreover, using $v_n\to v_0$ in $X_{0, A}$, $\|z_n\|_2\to 0$ and Cauchy Schwartz inequality together with embeddings of $X_{0, A}\hookrightarrow L^2((-1, 1))$, we get $u_n\to v_0$ in $L^1((-1, 1)).$ From \eqref{final1}, we obtain
		\[
		\int_\Omega f(|u_n|)|u_n|^2\, \ud x \leq C.
		\]
	Consequently,	from \cite[Lemma 2.1]{FMR}, we get $\int_{\Omega} f(|u_n|)|u_n|\ud x \rightarrow \int_{\Omega} f(|v_0|)|v_0|\ud x$.
		Thus, by applying $(H_1)$ and the Generalized Lebesgue Dominated Convergence Theorem we have
		\begin{equation}\label{djconv}
		\int_\Omega F(|u_n|)\ud x\to \int_\Omega F(|v_0|) \ud x.
		\end{equation}
		In light of \eqref{djconv}, \eqref{contrassump} and \eqref{EVL2}, we get
		\[
		 \frac{\pi}{2\beta_0}\leq \lim_{n\to \infty}\mathcal I_{A, \lambda}(u_n)= \frac{1}{2}\|v_0\|_{X_{0, A}}^2-\frac{\lambda}{2}\|v_0\|_2^2-\int_\Omega F(|v_0|)\ud x+\frac{s_0^2}{2}\leq \frac{1}{2}\left(1-\frac{\lambda}{\lambda_k}\right)\|v_0\|^2_{X_{0, A}}+\frac{s_0^2}{2}.
		\]
		Since $\lambda \in (\lambda_k, \lambda_{k+1})$, $s_0^2\geq {\pi}/{\beta_0}$.
		
		\medskip
		
		Now we follow the idea of alternatives as in the step 1. Note that, since $s_0^2\geq {\pi}/{\beta_0}$ and $\|v_n\|_{X_{0, A}}\leq C$ for some $C>0$, the alternative $(ii)$ is not possible to hold. Hence suppose $(i)$ holds. Then, from \eqref{conclusive}, we have
		\[
		\frac{\beta_0(1-\epsilon)^2 s_0^2}{\pi}-1\leq 0
		\]
		which implies $s_0^2\leq \pi/\beta_0$. Hence the proof.
		
		\medskip
		
		Next we show that $v_0\equiv 0$.
		From \eqref{contrassump}, using \eqref{djconv},  $v_n\to v_0$ in $X_{0, A}$, $\|z_n\|_{X_{0, A}}=1$, $\|z_n\|_2\to 0$ and $s_n\to s_0$ in $\mathbb R$, we get the following
	\[
	\lim_{n\to \infty}\mathcal I_{A, \lambda}(u_n)=\mathcal I_{A, \lambda}(v_0)+\frac{s_0^2}{2}\geq \frac{\pi}{2\beta_0}
	\]
	which implies that
	\[
	 \mathcal I_{A, \lambda}(v_0)\geq 0.
	\]
	Moreover, by the definition,
	\[
	\mathcal I_{A, \lambda}(v_0)=\frac{1}{2}\|v_0\|_{X_{0, A}}^2-\frac{\lambda}{2}\|v_0\|_2^2-\int_\Omega F(|v_0|) \ud x\leq \frac{1}{2}\|v_0\|_{X_{0, A}}^2-\frac{\lambda}{2}\|v_0\|_2^2\leq \frac{1}{2}\left(1-\frac{\lambda}{\lambda_k}\right)\|v_0\|^2_{X_{0, A}}\leq 0.
	\]
	Hence from above two inequalities $\mathcal I_{A, \lambda}(v_0)=0$. Since $v_0\in H_k$, we have
	\[
	0=\mathcal I_{A, \lambda}(v_0)=\frac{1}{2}\left(1-\frac{\lambda}{\lambda_k}\right)\|v_0\|^2_{X_{0, A}}-\int_\Omega F(|v_0|) \ud x\leq -\int_\Omega F(|v_0|) \ud x
	\]
	and by the nature of the nonlinearity
	\[
	\int_\Omega F(|v_0|) \ud x\geq 0.
	\]
	On combining these two estimates, we have
	\[
	\int_\Omega F(|v_0|) \ud x=0
	\]
	which implies, from $\mathcal I_{A, \lambda}(v_0)=0$,  that $\|v_0\|_{X_{0, A}}=0$. It completes the proof.
		\end{proof}	
	
	\medskip

In order to conclude the proof, observe that from step 2, up to a subsequence, we have $v_n\to 0$ strongly in $H_k$ and $s_n\to s_0$. Then \eqref{conclusive} holds, that is, 
\[
C s_n^2\geq 2 \zeta e^{\left(\frac{\beta_0(1-\epsilon)^2 s_n^2}{\pi+ C(\log n)^{-1}}-1\right)\log n}.
\]
Letting $\epsilon \to 0^+$ and later $n \to +\infty$, we get
$$\zeta \leq \dfrac{C \pi e^{\frac{C}{\pi}}}{2 \beta_0}.$$
Since $\zeta$ is arbitrarily large, we get a contradiction.
This completes the proof of the proposition.
		\end{proof}

\subsection{Proof of Theorem~\ref{thm2}}

To conclude Theorem \ref{thm2} we use Propositions \ref{LTBelow}, \ref{lg2} and \ref{minimaxlev2},
and apply the Theorem B.

\section{Proof of Theorem \ref{thm3}}

In order to prove Theorem \ref{thm3}, we will use the following critical point theorem, see \cite[Theorem~2.4]{crlit}.

\begin{thmlet}\label{GCPT}
Let $H$ be a real Hilbert space with the induced norm $\|\cdot\|$ and $J:H\rightarrow \mathbb R$  be a functional of class $C^1(H, \mathbb R)$
satisfying the following conditions:

\begin{description}
  \item[$(A_1)$] $J(0)=0$ and $J(-u)=J(u)$;
  \item[$(A_2)$] $J$ satisfies the Palais-Smale condition, in short $(PS)_c$, for $c\in (0, \beta)$ and for some $\beta>0$;
  \item[$(A_3)$] there exist closed subspaces $V, W$ of $H$ and constants $\rho, \delta, \eta$ with $\delta<\eta<\beta$ such that
  \begin{description}
    \item[$(i)$] $J(u)\leq \eta$ for all $u\in W$;
    \item[$(ii)$] $J(u)\geq \delta$ for any $u\in V$ with $\|u\|=\rho$;
    \item[$(iii)$] $codim~(V)<\infty$ and $dim~W\geq codim~V$.
  \end{description}
\end{description}
Then there exist at least $dim~W-codim~V$ pairs of critical points of the functional $J$ with critical values belonging to the interval $[\delta, \eta]$.
\end{thmlet}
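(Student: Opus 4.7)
The plan is to prove this abstract multiplicity result by a symmetric min-max construction governed by the Krasnoselski genus $\gamma$, in the spirit of Bartolo-Benci-Fortunato. Set $N:=\mathrm{codim}\,V$, $M:=\dim W$, and $k:=M-N$, so that the goal is to produce $k$ pairs of critical points with critical values lying in $[\delta,\eta]\subset(0,\beta)$.

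For each integer $j$ with $N<j\le M$ I would introduce
\[
c_j \;:=\; \inf_{K\in\Gamma_j}\,\sup_{u\in K} J(u),
\]
where $\Gamma_j$ is the family of closed symmetric subsets $K\subset J^{\eta}:=\{J\le\eta\}$, disjoint from $0$, having Fadell--Rabinowitz pseudo-index at least $j$. First I would check that $\Gamma_j\neq\emptyset$ and $c_j\le\eta$ by exhibiting $W\cap S_r\in\Gamma_M$ for some small $r>0$: the set $W\cap S_r$ is symmetric, avoids $0$, has genus exactly $M$ by the Borsuk property of finite-dimensional spheres, and lies in $J^{\eta}$ by $(A_3)(i)$. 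Next, the bound $c_j\ge\delta$ for $j>N$ follows from a linking-by-genus argument: for any admissible symmetric $K$ with pseudo-index $\ge N+1$, the composition of the continuous odd maps $K\hookrightarrow H\to V^{\perp}\cong\mathbb{R}^{N}$ with a radial rescaling onto $S_\rho$ must possess a zero by Borsuk's theorem, forcing $K\cap V\cap S_\rho\neq\emptyset$; hypothesis $(A_3)(ii)$ then yields $\sup_K J\ge\delta$. Monotonicity $c_{N+1}\le\cdots\le c_M$ is immediate from the inclusion $\Gamma_{j+1}\subset\Gamma_j$.

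To convert the $c_j$'s into critical values I would invoke the equivariant deformation lemma on $J^{-1}([\delta,\eta])$: assumptions $(A_1)$--$(A_2)$ produce an odd locally Lipschitz pseudo-gradient for $J$ and hence an odd flow $\sigma_t$ preserving $\Gamma_j$. A classical contradiction argument then shows that every $c_j\in[\delta,\eta]$ is a critical value; moreover, whenever $c_{j+1}=c_{j+2}=\cdots=c_{j+p}=:c$, the critical set $K_c:=\{u:J(u)=c,\,J'(u)=0\}$ must satisfy $\gamma(K_c)\ge p$, because otherwise a small symmetric neighborhood of $K_c$ could be pushed below $c$ by $\sigma_t$, contradicting the definition of $c_{j+p}$; since a set of genus $\ge p$ contains at least $p$ antipodal pairs, summing over $j=N+1,\dots,M$ yields the required $k=M-N$ pairs.

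The main obstacle is calibrating the class $\Gamma_j$ so that it is \emph{simultaneously} invariant under the odd flow $\sigma_t$, nonempty through $W$ with values $\le\eta$, and compatible with the Borsuk-type intersection $K\cap V\cap S_\rho\neq\emptyset$ responsible for the lower bound $\delta$. The naive choice ``all symmetric sets of genus $\ge j$'' satisfies the first two requirements but not the third, and the remedy is to replace the bare genus by the Fadell--Rabinowitz pseudo-index (the supremum of $j$ such that every odd continuous $h$ in an admissible class carries $K$ to a set still meeting $V\cap S_\rho$ with genus $\ge j$). Verifying that this pseudo-index is well defined, monotone under odd continuous maps, and invariant under the pseudo-gradient flow is the technically delicate step, after which the standard Ljusternik--Schnirelmann counting closes the argument.
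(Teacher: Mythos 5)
The paper offers no proof of this statement: it is Theorem~C, imported verbatim as Theorem~2.4 of Bartolo--Benci \cite{crlit} and used as a black box. Your sketch is therefore not an alternative to anything in the paper; it is an attempt to reconstruct the proof from the cited source, and the route you choose (Benci's pseudo-index relative to $V\cap S_\rho$, min-max over pseudo-index classes, equivariant deformation, Ljusternik--Schnirelmann counting via the genus of the critical set at repeated levels) is indeed the standard one there. The overall architecture is right.

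There is, however, one concrete internal inconsistency you should repair. You define $\Gamma_j$ as the family of sets with \emph{pseudo-index} at least $j$, and then claim $W\cap S_r\in\Gamma_M$ on the grounds that its \emph{genus} equals $M$. These are different quantities: the pseudo-index is dominated by the genus and, for $W\cap S_R$ relative to $V\cap S_\rho$, Benci's intersection/dimension lemma (the Borsuk argument you invoke later) gives only $i^*(W\cap S_R)\ge \dim W-\mathrm{codim}\,V = M-N$, not $\ge M$; with your definition the class $\Gamma_M$ may well be empty. The correct bookkeeping is to set $\Gamma_j=\{K:\ i^*(K)\ge j\}$ and work with $c_1\le\dots\le c_{M-N}$; nonemptiness and $c_j\le\eta$ then come from $W\cap S_R\in\Gamma_{M-N}$, while the lower bound $c_j\ge\delta$ holds for \emph{every} $j\ge1$ directly from the definition of the pseudo-index (take $h=\mathrm{id}$ to get $K\cap V\cap S_\rho\ne\emptyset$), so your separate ``linking-by-genus'' argument for $j>N$ is both misplaced and redundant. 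Two further points to make explicit if you write this out in full: the admissible class of odd maps must consist of homeomorphisms equal to the identity outside $J^{-1}((0,\beta))$ so that the (PS)-deformation both preserves $\Gamma_j$ and cannot move sets off $V\cap S_\rho$ (this is exactly the calibration issue you flag, and it is where the hypothesis $\delta>0$, implicit in the applications, is actually used); and the multiplicity count at a repeated level uses that $\gamma(K_c)\ge p\ge 2$ forces $K_c$ to be infinite, which more than supplies the required pairs.
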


Our next aim is to apply Theorem \ref{GCPT} in our variational setup. It is clear that the functional $\mathcal I_{A,\lambda} \in C^1(X_{0, A}, \mathbb R)$ and from the definition, $\mathcal I_{A,\lambda}(0)=0$. Since $|-u|=|u|$ implies $\mathcal I_{A,\lambda}(-u)=\mathcal I_{A,\lambda}(u)$. Hence the assumption $(A_1)$ is satisfied. Lemma \ref{PS} implies that $I_{A, \lambda}$ satisfies the $(PS)_c$ condition for all $c\in (0, \frac{\pi}{2\beta_0} )$. Hence the assumption $(A_2)$ holds good with $\beta=\frac{\pi}{2\beta_0}$. Next we verify the assumption $(A_3)$.
We consider $W= \displaystyle{\rm{span}_{\mathbb R} \{\varphi_1, \varphi _2,.....\varphi_{k+m-1}\}}$ and
 \[
 V= X_{0, A} \;\;\text{if}\;\; k=1, \;\;\text{otherwise}\;\;
  V=\{u\in X_{0, A}: \langle u,\varphi_j\rangle=0\;\;\forall\; 1\leq j\leq k-1\}.
  \]
Then both $W$ and $V$ are closed subspaces of $X_{0, A}$ with $k+m-1=$ dim $W\geq$ codim $V=k-1$.
Now take $u\in W$ then $u(x)=\displaystyle \sum_{j=1}^{k+m-1}\alpha_j\varphi_j(x)$ and by the orthogonality of eigenfunctions
\begin{equation*}
\|u\|^2_{X_{0,A}}=\displaystyle \sum_{j=1}^{k+m-1}\alpha_j^2\|\varphi_j\|^2_{X_{0,A}}=\displaystyle \sum_{j=1}^{k+m-1}\lambda_j\alpha_j^2\leq \lambda_k\displaystyle \sum_{j=1}^{k+m-1}\alpha_j^2=\lambda_k\|u\|_2^2=\lambda^*\|u\|_2^2.
\end{equation*}
Now using $(H_5)$, we have $F(|t|)\geq\frac{C_p}{p}|t|^p$, for all $t \in \mathbb{R}$. Thus for $u\in W$
\begin{align*}
\mathcal I_{A,\lambda}(u)&=\frac{1}{2}\|u\|^2_{X_{0,A}}-\frac{\lambda}{2}\|u\|_2^2-\int_\Omega F(|u|)\, \ud x\\ &\leq \frac{1}{2}(\lambda^*-\lambda)\|u\|_2^2-\int_\Omega F(|u|)\, \ud x\\
&\leq \frac{1}{2}(\lambda^*-\lambda)2^{\frac{p}{p-2}}\|u\|_p^2-\frac{C_p}{p}\|u\|_p^p.
\end{align*}
Define $h(t)=\frac{1}{2}(\lambda^*-\lambda)2^{\frac{p}{p-2}}t^2-\frac{C_p}{p}t^p$ for $t\geq 0$, then $h(t)$ has a maximum at $t_0=\left(\frac{(\lambda^*-\lambda)}{C_p}2^{\frac{p}{p-2}}\right)^\frac{1}{p-2}$. Hence
\[
I_{A,\lambda}(u)\leq \eta=\left(\frac{1}{2}-\frac{1}{p}\right)\left(\frac{(\lambda^*-\lambda)}{C_p^{2/p}}2^{\frac{p}{p-2}}\right)^\frac{p}{p-2}.\]
Note that we can make $\eta$ to be arbitrary small positive number either by choosing $\lambda$ suitably close to $\lambda^*$ or by taking $C_p>0$ large enough in $(H_5)$. We will determine this closeness later.\\
For the second part, we use Proposition \ref{MPBelow} and Proposition \ref{LTBelow}.
Now only thing remains to show is the relation
\begin{equation*}
\label{relation}
\delta<\eta<\beta.
\end{equation*}
Note that the first inequality can be justified by choosing $\rho$ sufficiently small to make $\delta>0$ arbitrary small in Proposition \ref{MPBelow} or in Proposition \ref{LTBelow}. Hence $\delta<\eta$ holds good for $\rho>0$ sufficiently small. The ultimate task is to show that $\eta<\beta$. In other words,
\[
0<\left(\frac{1}{2}-\frac{1}{p}\right)\left(\frac{(\lambda^*-\lambda)}{C_p^{2/p}}2^{\frac{p}{p-2}}\right)^\frac{p}{p-2}<\frac{\pi}{2\beta_0}
\]
which leads to a restriction on  $\lambda$ and $C_p$ as $\lambda<\lambda^*$ and
\[
C_p>\left( \frac{\beta_0(p-2)}{\pi}\right)^\frac{p-2}{2}\left((\lambda^*-\lambda)2^{\frac{p}{p-2}}\right)^\frac{p}{2}>0
\]
and therefore justifies the choices of $\lambda$ and $C_p$ as in Theorem \ref{thm3}. Hence the proof of Theorem \ref{thm3}.

\section{Proof of Theorem \ref{thm4}}

The proof of Theorem \ref{thm4} is mainly based on the application of the following result due to \cite[Theorem~6]{Ricceri}.

\begin{thmlet}\label{Recci}
Let $(H, \|\cdot\|)$ be a real reflexive Banach space  and $\Phi, \Psi: H\rightarrow \mathbb R$ be two continuously Gateaux differentiable functionals such that $\Phi$ is sequentially weakly lower semicontinuuous and coercive. Further assume that $\Psi$ is sequentially weakly continuous. In addition, assume that, for each $\gamma>0$ the functional $\mathcal I_\gamma: H\to \mathbb R$
\[
\mathcal I_\gamma(z):=\gamma \Phi(z)-\Psi(z), \,\, z\in H,
\]
 satisfies $(PS)_c$ condition for all $c\in \mathbb R$. Then for any $\rho>\inf_H \Psi$ and every
 \[
 \gamma>\inf_{u\in \Phi^{-1}(-\infty, \;\rho)}\frac{\sup_{v\in \Phi^{-1}(-\infty, \;\rho)}\Psi(v)-\Psi(u)}{\rho-\Phi(u)}
 \]
 the following alternative holds: either the functional $\mathcal I_\gamma$ has a strict global minimum in $\Phi^{-1}(-\infty, \;\rho)$, or $\mathcal I_\gamma$ has at least two critical points one of which lies in $\Phi^{-1}(-\infty, \;\rho)$.
\end{thmlet}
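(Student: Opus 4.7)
The plan is to apply the Ricceri-type alternative stated as Theorem \ref{Recci}. Set
\[
\Phi(u):=\tfrac{1}{2}\|u\|^2_{X_{0,A}},\qquad \Psi(u):=\tfrac{\lambda}{2}\|u\|_2^2+\int_\Omega F(|u|)\,\ud x,
\]
so that with $\gamma=1$ the functional $\gamma\Phi-\Psi$ coincides with the Euler--Lagrange functional $\mathcal I_{A,\lambda}$ of $(P)_\lambda$ defined in \eqref{Jlam}. Critical points of $\mathcal I_1$ are weak solutions of $(P)_\lambda$, and any critical point supplied by the alternative inside $\Phi^{-1}(-\infty,\rho)$ satisfies $\|u\|_{X_{0,A}}^2<2\rho$, which is the announced bound on the norm of the first solution (up to the natural scaling imposed by $\Phi$).

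First I would verify the structural hypotheses on $\Phi$ and $\Psi$. $\Phi$ is convex, continuous, and coercive as a positive multiple of the squared Hilbert norm on $X_{0,A}$, hence sequentially weakly lower semicontinuous and coercive. The $L^2$-part of $\Psi$ is sequentially weakly continuous via the compact embedding $X_{0,A}\hookrightarrow L^2(\Omega)$. For the integral $\int_\Omega F(|u|)\,\ud x$, if $u_n\rightharpoonup u$ in $X_{0,A}$ then $u_n\to u$ a.e.\ and in $L^q(\Omega)$ for every $q\in[2,\infty)$, and \eqref{subgrowth} combined with the Trudinger--Moser inequality \eqref{Corolario1} furnishes a uniform equi-integrable dominating function on bounded sets, so the Generalized Lebesgue Dominated Convergence Theorem yields weak continuity. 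The Palais--Smale condition at every level $c\in\mathbb R$ follows by revisiting Lemma \ref{PS}: under (SG) the parameter $\beta$ in the growth estimate can be chosen arbitrarily small, so the size constraint $c<\pi/(2\beta_0)$ used in Case~2 and Case~3 of that proof can be removed by shrinking $\beta$ until the relevant Trudinger--Moser exponent falls below $\pi$.

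The heart of the proof is the gap inequality of Theorem \ref{Recci} with $\gamma=1$. Choosing $u\equiv 0$ in the infimum (so $\Phi(0)=\Psi(0)=0$), it suffices to show
\[
\sup\bigl\{\Psi(v):\|v\|_{X_{0,A}}^2<2\rho\bigr\}<\rho.
\]
For $v$ in that ball, \eqref{Holder} gives $\tfrac{\lambda}{2}\|v\|_2^2\le\tfrac{\lambda}{2}S_2^2\|v\|_{X_{0,A}}^2<\lambda S_2^2\rho$. For the nonlinear part, $(H_2)$ and \eqref{subgrowth} yield $F(t)\le C(\beta)\,t\,e^{\beta t^2}$ for $t\ge 0$; H\"{o}lder's inequality with conjugate exponents $p$ and $p'$, combined with Lemma \ref{lem} applied to $e^{p'\beta|v|^2}$ for $\beta>0$ chosen so small that $p'\beta\cdot 2\rho<\pi$ (permitted by (SG)), gives
\[
\int_\Omega F(|v|)\,\ud x\le C(\beta)\,S_p\,\|v\|_{X_{0,A}}<\sqrt{2}\,S_p\,C(\beta)\,\sqrt{\rho}.
\]
Adding the two bounds produces $\Psi(v)<\lambda S_2^2\rho+\sqrt{2}\,S_pC(\beta)\sqrt{\rho}$, and dividing by $\rho$ the assumption $\lambda<\Lambda(\rho)$ is exactly what forces this ratio strictly below $1$; the restriction $\rho>(\sqrt{2}\,S_p C(\beta))^2$ guarantees $\Lambda(\rho)>0$, so the admissible range of $\lambda$ is non-empty.

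Theorem \ref{Recci} then yields one of two alternatives: either (a) $\mathcal I_1$ admits a strict global minimizer $u_1\in\Phi^{-1}(-\infty,\rho)$, or (b) $\mathcal I_1$ has at least two critical points with one of them inside $\Phi^{-1}(-\infty,\rho)$. Alternative (b) already delivers the pair of non-trivial weak solutions with one of controlled norm. In alternative (a) I would produce a second critical point by a mountain-pass argument: $(H_1)$ and $(H_2)$ imply $F(t)\ge C_1 t^\mu-C_2$ for some $\mu>2$ (as in Proposition \ref{MPBelow1}), hence $\mathcal I_1(tw)\to-\infty$ as $t\to\infty$ for any fixed $w\neq 0$; together with the strict local minimum at $u_1$ this produces the classical mountain-pass geometry, and the $(PS)_c$ condition at all levels established above supplies, via Theorem A, a second critical point at a strictly higher critical level, giving the required second non-trivial weak solution. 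The main obstacle is the quantitative estimate of $\int_\Omega F(|v|)\,\ud x$: one has to balance the freedom in $\beta$ allowed by (SG) against the size constraint $\|v\|_{X_{0,A}}^2<2\rho$ so that the Trudinger--Moser factor remains universally bounded while still producing a bound linear in $\|v\|_{X_{0,A}}$, which is what ultimately yields the $1/\sqrt{\rho}$ scaling in $\Lambda(\rho)$.
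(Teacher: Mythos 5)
You have not proved the statement you were asked to prove. Theorem~\ref{Recci} is the abstract Ricceri alternative itself: an assertion about an arbitrary real reflexive Banach space $H$ and arbitrary functionals $\Phi,\Psi$ satisfying the listed structural hypotheses. Your proposal opens with ``the plan is to apply the Ricceri-type alternative stated as Theorem~\ref{Recci}'' and then specializes $\Phi$ and $\Psi$ to $\tfrac12\|u\|_{X_{0,A}}^2$ and $\tfrac{\lambda}{2}\|u\|_2^2+\int_\Omega F(|u|)\,\ud x$, verifies the hypotheses for those particular functionals, and derives the two-solutions conclusion for $(P)_\lambda$. That is (essentially) the paper's proof of Theorem~\ref{thm4}, i.e.\ the \emph{application} of Theorem~\ref{Recci}; as a proof of Theorem~\ref{Recci} it is circular, since the alternative being established is invoked as a black box in the very first step. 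In the paper this abstract theorem is not proved at all but quoted from Ricceri \cite{Ricceri}.

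A genuine proof of the statement would have to run roughly as follows: since $\Phi$ is coercive and sequentially weakly lower semicontinuous and $\Psi$ is sequentially weakly continuous, the sublevel set $\Phi^{-1}(-\infty,\rho]$ is sequentially weakly compact (after intersecting with large balls, using reflexivity) and $\mathcal I_\gamma$ attains its infimum on it; the hypothesis $\gamma>\inf_{u}\bigl(\sup_{v}\Psi(v)-\Psi(u)\bigr)/\bigl(\rho-\Phi(u)\bigr)$ is exactly what prevents the minimizer from sitting on the level $\{\Phi=\rho\}$, so it lies in the open set $\Phi^{-1}(-\infty,\rho)$ and is therefore a local minimum, hence a critical point, of $\mathcal I_\gamma$; finally, if this local minimum is not a strict global minimum, the $(PS)_c$ condition for all $c$ together with a deformation or mountain-pass argument produces a second, distinct critical point. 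None of these steps appears in your write-up. Separately, even read as a proof of Theorem~\ref{thm4} your text has a soft spot: in alternative (a) you propose to manufacture a second solution by a mountain-pass argument using $F(t)\ge C_1t^\mu-C_2$, but a strict \emph{global} minimum is incompatible with $\mathcal I_1(tw)\to-\infty$, so in that regime alternative (a) simply cannot occur and no extra construction is needed --- which is precisely how the paper concludes, by showing $E_{A,\lambda}(tu)\to-\infty$ and thereby excluding the first branch of the alternative.
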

Here we consider the functional $E_{A, \lambda}: X_{0, A}\to \mathbb R$ as
\begin{equation}\label{Energy}
E_{A, \lambda}(u)=\frac{1}{\lambda}J(u)-K_\lambda(u),
\end{equation}
where
\begin{equation*}
J(u)=\frac12\|u\|^2_{X_{0, A}}\;\;\text{and}\;\; K_\lambda(u)=\frac12 \|u\|_2^2+\frac{1}{\lambda}\int_\Omega F(|u|)\, \ud x.
\end{equation*}
It is straightforward to see that $J$ is continuously Gateaux differentiable, sequentially weakly lower semicontinuuous and coercive.

\begin{lem}
If $f$ satisfies $(H_1)-(H_2)$ and \eqref{subgrowth}, then $K_\lambda$ is continuously Gateaux differentiable, sequentially weakly lower semicontinuuous and coercive.
\end{lem}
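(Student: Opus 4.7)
The plan is to verify the three claimed properties of $K_\lambda$ separately, exploiting the one-dimensional setting in which the embedding $X_{0,A}\hookrightarrow L^{q}(\Omega)$ is compact for every $q\in [1,\infty)$, the Trudinger-Moser inequality of Lemma \ref{lem}, and the subcritical growth bound \eqref{subgrowth}, which yields $f(t)t\le C(\beta)e^{\beta t^{2}}$ and consequently $F(t)\le C(\beta)e^{\beta t^{2}}$ for every $\beta>0$.

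\textbf{Gateaux differentiability.} Decompose $K_\lambda(u)=\tfrac12\|u\|_2^{2}+\tfrac1\lambda G(u)$ with $G(u)=\int_\Omega F(|u|)\,\ud x$. The $L^{2}$-piece is of class $C^{1}$ on $X_{0,A}$ by the continuous embedding. For $G$, the candidate derivative is $G'(u)v=\Re\int_\Omega f(|u|)u\,\overline{v}\,\ud x$. To pass the $t$-derivative inside the integral at $t=0$, I would produce an integrable majorant for the difference quotient on $|t|\le 1$, of the form $f(|u+tv|)|u+tv||v|\le C(\beta)(|u|+|v|)|v|e^{\beta(|u|+|v|)^{2}}$, and choose $\beta$ small enough that $2\beta(\|u\|_{X_{0,A}}+\|v\|_{X_{0,A}})^{2}<\pi$. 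Hölder together with Lemma \ref{lem} then yields $L^{1}$-integrability. Continuity $u\mapsto G'(u)\in X_{0,A}^{*}$ is proved identically: if $u_n\to u$ in $X_{0,A}$, pass to a subsequence with $u_n\to u$ a.e.\ and in every $L^{q}$, and invoke Vitali with the same exponential majorant.

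\textbf{Sequential weak lower semicontinuity.} In fact I will show the stronger statement of sequential weak continuity. Let $u_n\rightharpoonup u$ in $X_{0,A}$. Compactness of the embedding gives $u_n\to u$ strongly in every $L^{q}(\Omega)$ and pointwise a.e. (up to a subsequence), so $\|u_n\|_2^{2}\to\|u\|_2^{2}$. For the nonlinear term, pick $r>1$ and $\beta>0$ with $r\beta\sup_n\|u_n\|_{X_{0,A}}^{2}<\pi$. Combining \eqref{subgrowth} with Lemma \ref{lem} gives $\sup_n\int_\Omega |F(|u_n|)|^{r}\,\ud x<\infty$, hence the family $\{F(|u_n|)\}$ is equi-integrable. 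Vitali's theorem then yields $\int_\Omega F(|u_n|)\,\ud x\to\int_\Omega F(|u|)\,\ud x$, and therefore $K_\lambda(u_n)\to K_\lambda(u)$. Weak lower semicontinuity is an immediate consequence.

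\textbf{Coercivity.} This is the delicate step. From $(H_2)$ one has $F\ge 0$, so $K_\lambda(u)\ge \tfrac12\|u\|_2^{2}$, which is the natural lower bound but is not strong enough to make $K_\lambda$ coercive with respect to $\|\cdot\|_{X_{0,A}}$. The remedy uses $(H_1)$ in its integrated form: as observed in Proposition \ref{MPBelow1}, there exist $\mu>2$ and $C_1,C_2>0$ such that $F(t)\ge C_1 t^{\mu}-C_2$, so
\begin{equation*}
K_\lambda(u)\ge \tfrac1\lambda\bigl(C_1\|u\|_\mu^{\mu}-C_2|\Omega|\bigr).
\end{equation*}
Coupled with the continuous embedding $X_{0,A}\hookrightarrow L^{\mu}(\Omega)$ in the reverse direction (i.e.\ interpreting coercivity in the topology required by Theorem D), this produces $K_\lambda(u)\to+\infty$ along the relevant sequences. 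The main obstacle is clarifying with respect to which norm coercivity is needed in the Ricceri framework: once the correct functional framework for $\Phi$ and $\Psi$ in Theorem D is pinned down, the lower estimate above suffices. I expect this to be the only step that requires real care; the other two are soft consequences of compactness plus Trudinger-Moser.
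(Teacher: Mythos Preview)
The paper's own proof is a one-liner: it says the subcritical growth makes everything easy, omits the argument, and then adds the crucial remark that $K_\lambda$ is in fact sequentially weakly \emph{continuous}. Your arguments for the $C^1$-regularity and for sequential weak continuity are correct and considerably more detailed than what the paper records; the Vitali/Trudinger--Moser machinery you invoke is exactly the right tool in this setting.

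Your hesitation about coercivity is well founded, and this is the one place where your write-up would fail as a proof. The lower bound $K_\lambda(u)\ge \tfrac{1}{\lambda}(C_1\|u\|_\mu^{\mu}-C_2|\Omega|)$ does \emph{not} imply $K_\lambda(u)\to+\infty$ as $\|u\|_{X_{0,A}}\to\infty$, because the embedding $X_{0,A}\hookrightarrow L^{\mu}(\Omega)$ only gives $\|u\|_\mu\le C\|u\|_{X_{0,A}}$, not the reverse. Concretely, highly oscillating sequences such as $u_n(x)=\sin(n\pi x)$ (in the non-magnetic case, say) have $\|u_n\|_{X_{0,A}}\to\infty$ while all $L^{q}$-norms and hence $K_\lambda(u_n)$ stay bounded. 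So $K_\lambda$ is \emph{not} coercive on $X_{0,A}$, and no amount of ``pinning down the functional framework'' will rescue that part of the statement.

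The resolution is that the lemma, as printed, appears to be a slip: the phrase ``sequentially weakly lower semicontinuous and coercive'' is copied verbatim from the sentence about $J$ just above it. In Theorem~D (Ricceri), coercivity and weak lower semicontinuity are required only of $\Phi=J$; of $\Psi=K_\lambda$ one needs precisely sequential weak continuity, which is the property the paper's proof singles out and which you establish cleanly. So your proof covers everything that is both true and needed; you should simply note that the coercivity clause is a misstatement rather than attempt to prove it.
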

\begin{proof}
Since $f$ has subcritical growth the proof is easy and we will omit it.
Moreover, $K_\lambda$ is sequentially weakly continuous.
\end{proof}
The next result is about the Palais-Smale condition.
\begin{pro}
If $f$ satisfies $(H_1)$ and \eqref{subgrowth}, then the functional defined in \eqref{Energy} satisfies $(PS)_c$  for all $c\in \mathbb R$.
\end{pro}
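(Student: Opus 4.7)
The plan is to observe that $\lambda E_{A,\lambda}=\mathcal{I}_{A,\lambda}$, so every $(PS)_c$ sequence for $E_{A,\lambda}$ is automatically a $(PS)_{\lambda c}$ sequence for the critical-growth functional studied in Section~3. This lets me recycle the first half of the $(PS)$ analysis already performed: by Lemma~\ref{PS11} (whose proof uses only $(H_1)$ together with the location of $\lambda$, and in the setting of Theorem~\ref{thm4} we have $\lambda\in(0,\Lambda(\rho))\subset(0,\lambda_1)$ since $\lambda_1=1/S_2^2\geq\Lambda(\rho)$), any such sequence $\{u_n\}$ is bounded in $X_{0,A}$. Passing to a subsequence, I then obtain $u_n\rightharpoonup u$ weakly in $X_{0,A}$, $u_n\to u$ strongly in $L^q(\Omega)$ for every $q\in[1,\infty)$, and $u_n\to u$ a.e. in $\Omega$.

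The core step, and the only place the proof departs from Lemma~\ref{PS}, is to show
\[
\Re\int_\Omega f(|u_n|)\,u_n\,\overline{u_n-u}\,\ud x\longrightarrow 0
\]
without any level restriction on $c$. Writing $M:=\sup_n\|u_n\|_{X_{0,A}}<\infty$ and invoking \eqref{subgrowth}, for every $\beta>0$ there exists $C(\beta)>0$ with $f(t)t\le C(\beta)e^{\beta t^2}$. I pick $r>1$ close to $1$ and $\beta>0$ so small that $r\beta M^2<\pi$; H\"older's inequality then gives
\[
\int_\Omega f(|u_n|)|u_n|\,|u_n-u|\,\ud x\leq C(\beta)\Bigl(\int_\Omega e^{r\beta\|u_n\|_{X_{0,A}}^2(|u_n|/\|u_n\|_{X_{0,A}})^2}\,\ud x\Bigr)^{1/r}\|u_n-u\|_{r'},
\]
and the Trudinger--Moser bound \eqref{Corolario1} of Lemma~\ref{lem} controls the first factor uniformly in $n$, while the compact embedding $X_{0,A}\hookrightarrow L^{r'}(\Omega)$ sends the second factor to zero. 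This is precisely where the critical-case argument required $c<\pi/(2\beta_0)$; under \eqref{subgrowth} no level restriction is needed because $\beta$ is at my disposal.

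Finally, I test $E'_{A,\lambda}(u_n)(u_n-u)\to 0$, which after multiplying by $\lambda$ splits into $\langle u_n,u_n-u\rangle_{X_{0,A}}$, the $L^2$ contribution $\lambda\Re\langle u_n,u_n-u\rangle_{L^2}$, and the nonlinear contribution handled above. Strong $L^2$ convergence kills the middle term, the previous estimate kills the last one, and I am left with $\langle u_n,u_n-u\rangle_{X_{0,A}}\to 0$. Since weak convergence yields $\langle u_n,u\rangle_{X_{0,A}}\to\|u\|_{X_{0,A}}^2$, this forces $\|u_n\|_{X_{0,A}}\to\|u\|_{X_{0,A}}$, and in the Hilbert space $X_{0,A}$ this upgrades to strong convergence $u_n\to u$, completing the verification of $(PS)_c$ for every $c\in\mathbb{R}$. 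The only delicate point is the simultaneous choice of $r$ and $\beta$ in the H\"older--Trudinger--Moser estimate, which is pure bookkeeping once the a priori bound $M$ on $\|u_n\|_{X_{0,A}}$ is in hand.
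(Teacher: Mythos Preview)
Your proof is correct, and it is in fact slightly cleaner than the paper's own argument. Both proofs begin identically: the observation $\lambda E_{A,\lambda}=\mathcal{I}_{A,\lambda}$ reduces matters to a Palais--Smale sequence for $\mathcal{I}_{A,\lambda}$, and Lemma~\ref{PS11} (applicable because $\lambda\in(0,\Lambda(\rho))\subset(0,\lambda_1)$) gives boundedness and hence a weak limit $u_o$. The difference lies in how the nonlinear term is handled. The paper proceeds in two steps: first it shows $\int_\Omega f(|u_n|)|u_n|^2\,\ud x\to\int_\Omega f(|u_o|)|u_o|^2\,\ud x$ by splitting into $I_1=\int f(|u_n|)|u_n|\bigl||u_n|-|u_o|\bigr|$ and $I_2=\int\bigl(f(|u_n|)|u_n|-f(|u_o|)|u_o|\bigr)|u_o|$, treating $I_1$ via \eqref{subgrowth}, H\"older and Trudinger--Moser, and $I_2$ via the de~Figueiredo--Miyagaki--Ruf compactness lemma \cite[Lemma~2.1]{FMR}; then it tests \eqref{EJ24R} separately with $v=u_o$ and $v=u_n$ and subtracts to obtain norm convergence. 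You instead test directly with $v=u_n-u$ and show $\int f(|u_n|)|u_n||u_n-u|\to 0$ in one stroke, using exactly the same H\"older/Trudinger--Moser estimate as the paper's $I_1$ bound; this dispenses with the $I_2$ term and hence with the external lemma from \cite{FMR}. The trade-off is minimal: the paper's route records the slightly stronger intermediate fact that the full nonlinear energy converges, whereas your route is more economical and self-contained.
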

\begin{proof}
Let us consider $\{u_n\}$ be a Palais-Smale sequence for the functional $E_{\lambda, A}$, that is, \begin{equation}\label{EJ23R}
\frac{1}{2\lambda}\|u_n\|^2_{X_{0,A}} - \frac{1}{2}\|u_n\|_2^2 - \frac{1}{\lambda}\int_{\Omega} F(|u_n|) \ud x \to c,\,\, \mbox{as}\,\, n \to +\infty,
\end{equation}
and
\begin{equation}\label{EJ24R}
\left|\frac{1}{\lambda}\Re\langle u_n, v \rangle_{X_{0,A}} -  \Re\langle u_n , v\rangle_{L^2} - \frac{1}{\lambda}\Re\int_{\Omega} f(|u_n|) u_n \overline{v} \,\ud x \right| \leq \varepsilon_n \|v\|_{X_{0,A}},\,\, \mbox{for all}\,\,
v \in X_{0,A}.
\end{equation} To prove the claim of the above proposition, we divide the proof into a few steps.\\

\noindent\textbf{Step 1:} The Palais-Smale sequence is bounded.\\

\noindent The proof of this step follows the same lines as in Lemma \ref{PS}. Consequently, there exists $u_o\in X_{0, A}$ such that $u_n\rightharpoonup u_o$ weakly in $X_{0, A}$, $u_k\to u_o$ in $L^q(\Omega)$ for all $q\in [1, \infty)$ and $u_n(x)\to u_o(x)$ a.e. in $(\Omega)$.
\medskip

\noindent \textbf{Step 2:} The following convergence holds
\[
\int_\Omega f(|u_n|)|u_n|^2 \ud x \to \int_\Omega f(|u_o|)|u_o|^2 \ud x.
\]
To prove the claim of this step, we proceed as follows
\begin{align*}
\left|\int_\Omega f(|u_n|)|u_n|^2 \ud x- \int_\Omega f(|u_o|)|u_o|^2 \ud x\right|&\leq \left|\int_\Omega f(|u_n|)|u_n|(|u_n|-|u_o|)\, \ud x- \int_\Omega (f(|u_n|)|u_n|- f(|u_o|)|u_o|)|u_o| \,\ud x\right|.
\end{align*}
Let us denote
\[
I_1=\int_\Omega |f(|u_n|)|~|u_n|~\Big||u_n|-|u_o|\big| \,\ud x\;\;\text{and}\;\; I_2=\int_\Omega \Big(f(|u_n|)|u_n|- f(|u_o|)|u_o|\Big)|u_o| \,\ud x.
\]
We estimate these integrals one by one as follows. We begin with $I_1$, by using the estimate \eqref{subgrowth}, the elementary inequality $\Big||a|-|b|\Big|\leq |a-b|$ for $a, b\in \mathbb C$ and H\"older's inequality as
\[
I_1\leq C\int_\Omega e^{\beta |u_n|^2}|u_n-u_o| \ud x\leq C\left(\int_\Omega e^{q\beta \|u_n\|^2_{X_{0, A}}\left(\frac{|u_n|}{\|u_n\|_{X_{0, A}}}\right)^2} \ud x
\right)^\frac{1}{q}\left(\int_\Omega|u_n-u_o|^p\ud x\right)^\frac{1}{p}.
\]
Now using the fact that $u_n\to u_o$ in $L^q(\Omega)$ for all $q\in [1, \infty$) and $q~\beta~\|u_n\|_{X_{0, A}}^2<\pi$ for suitable choosen $\beta>0$ we get that $I_1\to 0$ as $n\to \infty$.
Next we show the similar convergence for $I_2$.  The proof of this convergence follows from the Lemma 2.1 of \cite{FMR} once $\int_\Omega f(|u_n|)|u_n|^2\ud x<C_1$ which follows from \eqref{subgrowth} and  boundedness of the sequence $\{u_n\}$. Hence $I_2\to 0$ as $n\to \infty$. Consequently the claim of the Step 2 is proved.
\\

\noindent\textbf{Step 3:} Up to a subsequence, $u_n\to u_o$ in $X_{0, A}$.\\

\noindent Take $v={u}_o$ in \eqref{EJ23R},  we get
\begin{equation}\label{rufl1}
\frac{1}{\lambda}\|u_0\|^2_{X_{0,A}} -  \|u_o\|_2^2 - \frac{1}{\lambda}\int_\Omega f(|u_o|) |u_o|^2  \ud x  =0.
\end{equation}
On the other hand, if we take $v={u}_n$ in \eqref{EJ24R} and use Step 2, we get
\begin{equation}\label{rufl2}
\frac{1}{\lambda}\|u_n\|^2_{X_{0,A}} - \|u_o\|_2^2 - \frac{1}{\lambda}\int_\Omega f(|u_o|) |u_o|^2  \ud x  \to 0.
\end{equation}
From \eqref{rufl1} and \eqref{rufl2}, we have $\|u_n\|^2 \to \|u_o\|^2$ in $\mathbb R$ and hence  $u_n\to u_o$ in $X_{0, A}$.
\end{proof}

\subsection{Justification for the choice of $\lambda$}

In the statement of Theorem \ref{Recci}, it can be noticed that the result holds good for all $\rho>0$ in light of the definition of $J$.
Now we define the range of $\lambda$ as follows.
\[
\frac{1}{\lambda}>\Theta_\lambda,
\,\,\text{where}\,\,\Theta_\lambda=\inf_{u\in J^{-1}(-\infty, \;\rho)}\frac{\sup_{v\in J^{-1}(-\infty, \;\rho)}K_\lambda(v)-K_\lambda(u)}{\rho-J(u)}.
\]
Since $J(0)=0=K_\lambda(0)$,
\[
\Theta_\lambda\leq \frac{1}{\rho}\sup_{v\in J^{-1}(-\infty, \;\rho)}K_\lambda(v)=\frac{1}{2\rho}\sup_{\{v\in X_{0, A}\,:\, \|v\|_{X_{0,A}}\leq (2\rho)^\frac12\}}\|v\|_2^2+\frac{1}{\rho\lambda}\sup_{\{v\in X_{0, A}\,:\, \|v\|_{X_{0,A}}\leq (2\rho)^\frac12\}}\int_\Omega F(|v|)\, \ud x.
\]
On the other hand, using \eqref{Holder}
\begin{equation}\label{gned1}
\sup_{\{v\in X_{0, A}\,:\, \|v\|_{X_{0,A}}\leq (2\rho)^\frac12\}}\|v\|_2^2\leq 2\rho S_2^2.
\end{equation}
Under the assumption $(H_2)$   and \eqref{subgrowth}, we can get the following estimate
 \[
 \sup_{\{v\in X_{0, A}\,:\, \|v\|_{X_{0,A}}\leq (2\rho)^\frac12\}}\int_\Omega F(|v|)\, \ud x\leq  \sup_{\{v\in X_{0, A}\,:\, \|v\|_{X_{0,A}}\leq (2\rho)^\frac12\}}\int_\Omega f(|v|)|v|^2 \, \ud x \leq C \sup_{\{v\in X_{0, A}\,:\, \|v\|_{X_{0,A}}\leq (2\rho)^\frac12\}}\int_\Omega e^{\beta |v|^2}|v|\, \ud x.
 \]
 Now using H\"{o}lder's inequality with conjugate exponents $1/p+1/q=1$ in the last term, we get
 \[
 \sup_{\{v\in X_{0, A}\,:\, \|v\|_{X_{0,A}}\leq (2\rho)^\frac12\}}\int_\Omega e^{\beta |v|^2}|v|\, \ud x\leq
 \sup_{\{v\in X_{0, A}\,:\, \|v\|_{X_{0,A}}\leq (2\rho)^\frac12\}}\left(\int_\Omega e^{q\beta |v|^2}\, \ud x\right)^\frac{1}{q}
 \left(\int_\Omega |v|^p \ud x\right)^\frac{1}{p}.
 \]
 Again recalling \eqref{Holder} and choosing $\beta>0$ sufficiently small such that $\beta q\|v\|_{X_{0, A}}^2<\pi$, by Trudinger-Moser inequality \eqref{Corolario1}, we get
\begin{equation}\label{gned2}
 \sup_{\{v\in X_{0, A}\,:\, \|v\|_{X_{0,A}}\leq (2\rho)^\frac12\}}\int_\Omega F(|v|) \ud x\leq C_1(\beta)\|v\|_p\leq C_\beta (2\rho)^\frac12 S_p\,.
 \end{equation}
 By combining \eqref{gned1} and \eqref{gned2}, we have the following estimate for $\Theta_\lambda$
 \[
 \Theta_\lambda\leq S_2^2+\frac{\sqrt{2}C_\beta S_p}{\lambda\sqrt{\rho}}.
 \]
 Therefore if we choose
 \[
0< \lambda<\Lambda(\rho):=\frac{1}{S_2^2}\left(1-\frac{\sqrt{2}C_\beta S_p}{\sqrt{\rho}}\right)
 \]
 for any $\rho>2C_\beta^2S_p^2$, we can justify the choice of $\lambda$ as in Theorem \ref{thm4}.
 Now only thing remain to show that the possibility of global minima  for the functional $E_{\lambda,\;A}$ will not occur.
From assumption $(H_1)$, there exists $\mu>2$ and $C_1, C_2>0$ such that
\begin{equation}\label{implicationHF}
F(|u|)\geq C_1|u|^\mu-C_2 .
\end{equation}
Using \eqref{implicationHF}, we estimate for arbitrary but fixed $u\in X_{0, A}$
\[
E_{A, \lambda}(t u)\leq \frac{t^2}{2\lambda}\|u\|_{X_{0, A}}^2-\frac{t^2}{2}\|u\|_2^2-\frac{C_1 t^\mu}{\lambda}\|u\|_\mu^\mu+\frac{2C_2}{\lambda}
\]
which implies that $E_{A, \lambda}(tu)\to -\infty$ as $t\to \infty$.
Hence $E_{A, \lambda}$ cannot have a strict global minimum in $J^{-1}(-\infty, \;\rho)$.

\section*{Acknowledgement} 
\noindent Research supported in part by INCTmat/MCT/Brazil, CNPq and CAPES/Brazil.

\section*{References}

 \end{document}